\numberwithin{equation}{section}
\theoremstyle{plain}
\newtheorem{theorem}{Theorem}[section]
\newtheorem{lemma}[theorem]{Lemma}
\newtheorem{proposition}[theorem]{Proposition}
\newtheorem{corollary}[theorem]{Corollary}
\theoremstyle{definition}
\newtheorem{definition}[theorem]{Definition}
\newtheorem{notations}[theorem]{Notations}
\newtheorem{remark}[theorem]{Remark}
\newcommand{\jc}{\ensuremath{\mathcal{J}}}
\newcommand{\ac}{\ensuremath{\mathcal{A}}}
\newcommand{\oc}{\ensuremath{\mathcal{O}}}
\newcommand{\fc}{\ensuremath{\mathcal{F}}}
\newcommand{\ec}{\ensuremath{\mathcal{E}}}
\newcommand{\gc}{\ensuremath{\mathcal{G}}}
\newcommand{\vc}{\ensuremath{\mathcal{V}}}
\newcommand{\lc}{\ensuremath{\mathcal{L}}}
\newcommand{\Sc}{\ensuremath{\mathcal{S}}}
\newcommand{\K}{\mathbb{K}}
\newcommand{\mL}{\mathbb{L}}
\newcommand{\C}{\mathbb{C}}
\newcommand{\Q}{\mathbb{Q}}
\newcommand{\A}{\mathbb{A}}
\newcommand{\As}{\mathbb{A}^*}
\newcommand{\Hom}{\mathrm{Hom}}
\newcommand{\End}{\mathrm{End}}
\renewcommand{\Im}{\mathrm{Im}\,}
\title[K-lattices and the Adelic Heisenberg group for CM curves]
{Some remarks on K-lattices and the Adelic Heisenberg group for CM curves}
\author[F.~D'Andrea]{Francesco D'Andrea}
\address[F.~D'Andrea]{Universit\`a di Napoli ``Federico II'', Dipartimento di Matematica e
Applicazioni ``R.~Caccioppoli'' 
and I.N.F.N.\ Sezione di Napoli, Complesso Monte S.~Angelo, Via Cinthia, 80126 Napoli.}
\email{francesco.dandrea@unina.it}
\author[D.~Franco]{Davide Franco}
\address[D.~Franco]{Universit\`a di Napoli ``Federico II'', Dipartimento di Matematica e
Applicazioni ``R.~Caccioppoli'', 
Complesso Monte S.~Angelo, Via Cinthia, 80126 Napoli.}
\email{davide.franco@unina.it}
\subjclass[2010]{11R56, 11R37, 11G15, 14K25, 58B34.}
\keywords{\rule{0pt}{9pt}K-lattices, complex multiplication, adelic Heisenberg group, adelic theta functions.}
\thanks{\rule{0pt}{9pt}\textit{Acknowledgments.} This research was partially supported by 
UniNA and Compagnia di San Paolo under the grant ``STAR Program 2013''.}
\begin{document}

\begin{abstract}
We define an adelic version of a CM elliptic curve $E$ which  is equipped with an action of the profinite completion of the endomorphism ring of $E$. 
The adelic elliptic curve so obtained is  provided   with a natural embedding into the adelic Heisenberg group. We embed into the adelic Heisenberg group the set of  commensurability classes of arithmetic $1$-dimensional $\mathbb{K}$-lattices (here and subsequently, $\mathbb{K}$ denotes a quadratic imaginary number field) and
define theta functions on it. We also embed the groupoid of commensurability modulo dilations into the union of adelic Heisenberg groups relative to a set of representatives of elliptic curves with $R$-multiplication ($R$ is   the ring of algebraic integers of  $\K$).
We thus get adelic theta functions   on the set of $1$-dimensional $\mathbb{K}$-lattices and on the groupoid of commensurability modulo dilations.
Adelic theta functions turn out to be acted by the adelic Heisenberg group and behave nicely under complex automorphisms (Theorems \ref{last} and \ref{last'}).
\end{abstract}

\maketitle

\thispagestyle{empty}

\section{Introduction}
The aim of this paper is to connect the natural action of the
Heisenberg group on adelic theta functions with the 
adelic action stemming from the main theorem of 
complex multiplication for elliptic curves. We   are also interested in defining an embedding of the moduli spaces of arithmetic $1$-dimensional $\mathbb{K}$-lattices (here and subsequently, $\K$ denotes a quadratic imaginary number field)
into the adelic Heisenberg group, in order to define on them theta functions with a nice behavior under complex automorphisms (Theorems \ref{last} and \ref{last'}).

After the seminal paper \cite{BC}, many efforts have been devoted in recent years to the construction of quantum  systems incorporating explicit class field theory   for an imaginary quadratic number field $\mathbb{K}$ (\cite{KMS}, \cite{LF}, \cite{HL}, \cite{Ch}, \cite{KMSII}). More  specifically, in \cite{KMS} it is exhibited
a quantum statistical mechanical system fully incorporating the explicit class field theory.
The main ingredients of \cite{KMS} was given  in terms of commensurability of $1$-dimensional $\mathbb{K}$-lattices. The connection between class field theory and quantum statistical mechanics is provided by a $C^*$-dynamical system containing an \textit{arithmetic subalgebra} $\mathcal{A}_{\mathbb{Q}}$ with symmetry group isomorphic to $Gal(\mathbb{K}^{\mathrm{ab}},\mathbb{K})$ and a set of \textit{fabulous states} sending $\mathcal{A}_{\mathbb{Q}}$ to $\mathbb{K}^{\mathrm{ab}}$. The symmetry group action turns out to be compatible with Galois' one. The arithmetic subalgebra is defined by means of the \textit{modular field}, namely by the field of modular function defined over $\mathbb{Q}^{\mathrm{ab}}$.

This work was intended as an  attempt to define canonical \textit{adelic theta functions} on commensurability classes of (arithmetic) $1$-dimensional $\mathbb{K}$-lattices
and on the groupoid of commensurability modulo dilations. The main ingredient of our construction is provided by  the \textit{adelic Heisenberg group} (\cite{Theta3}). Adelic theta functions have  a nice behavior under the Galois action which incorporates all the properties stated in the \textit{Main Theorem of Complex Multiplication} (see \S\S ~5 and 6).

Motivated by a purely algebraic definition of adelic theta function over an abelian variety and their deformations, David Mumford introduced, in a celebrated series of papers of the sixties (\cite{Mum}), the \textit{finite Heisenberg group} acting on the sections of an ample line bundle defined on the abelian variety. This led him to an adelic version of any abelian variety, defined as an extension of the set of its torsion points by the Barsotti-Tate group (\cite{Theta3}, Ch.~3). It turned out that  sections of the pull back of some line bundle on the \textit{tower of isogenies} (\cite{Theta3}, Definition 4.26), the so called \textit{adelic theta functions},  are acted on by an adelic version of the Heisenberg group (\cite{Theta3}, Ch.4).

In this work we apply Mumford's constructions to elliptic curves with complex multiplication (CM curves for short) and compare it with the moduli spaces of $1$-dimensional $\mathbb{K}$-lattices introduced in \cite{KMS}. We define an adelic version of a CM elliptic curve $E$ which  is equipped with an action of the profinite completion of the endomorphism ring of $E$. This is also provided   with a natural embedding into the adelic Heisenberg group. 
This allows us to incorporate the  endomorphism ring  of the CM curve into the definitions of Heisenberg group and theta functions. 
We thus get an interpretation by means of Class Field Theory (Theorems \ref{main}, \ref{main'}, \ref{last} and \ref{last'}) of the usual nice behavior of theta functions under automorphisms fixing the Hilbert field of $\mathbb{K}$ (\cite{Theta3}, Proposition 5.6). One of our main results is an extension of the \textit{Main Theorem of Complex Multiplication}  (\cite{Sil} II, Theorem 8.2) involving the Heisenberg group, which allows us to give a complete description of the behavior of theta functions under complex automorphisms (Theorem \ref{main}, Theorem \ref{main'}). 

Aimed at defining theta functions on them, we embed the set of commensurability classes of arithmetic K-lattices into the adelic Heisenberg group (Notations \ref{mapslattices}). We also embed the groupoid of commensurability modulo dilations into the union of finitely many adelic Heisenberg groups (indeed into the union of adelic Heisenberg groups
corresponding to a set of representatives of elliptic curves with $R$-multiplication,  modulo isomorphisms). We obtain theta functions on the set of $1$-dimensional $\mathbb{K}$-lattices and on the groupoid of commensurability modulo dilations which are equipped with an action of the Heisenberg group and exhibiting a nice behavior under complex automorphisms (Theorems \ref{last} and \ref{last'}, Notations \ref{lastnotations}).

The paper is organized as follows. In Section $2$  we collect some basic facts about \textit{ad{\`e}les} and \textit{adelic elliptic curves} that will be needed in the following.  In Section $3$ we recall the spaces of $1$-dimensional $\mathbb{K}$-lattices introduced in \cite{KMS} and compare them with adelic CM curves in order to obtain natural morphisms into the Heisenberg group. Furthermore, we  embed into the adelic Heisenberg group the set of  commensurability classes of arithmetic $1$-dimensional $\mathbb{K}$-lattices, and  we also   embed  the groupoid of commensurability modulo dilations into the union of adelic Heisenberg groups relative to a set of representatives of elliptic curves with $R$-multiplication (Notations \ref{mapslattices}).  In Section $4$ we introduce the Adelic Heisenberg group of a CM curve and embed the adelic CM curve into it by means of a symmetric line bundle. We also describe the  action  of the complex automorphisms fixing the Hilbert field of $\mathbb{K}$ on the Heisenberg group. In Section $5$ we state and prove a version of the \textit{Main Theorem of Complex Multiplication} (\cite{Sil} II, Theorem 8.2) concerning Heisenberg groups (Theorems \ref{sigmataucommute} and \ref{AMTCM}). In a nutshell, what such  theorems say   is that if two different CM curves are mapped into each other by a complex automorphism  then the embeddings into their Heisenberg groups can be made coherent with such a map. Finally,  in Section $6$  we introduce theta functions and study their behavior under complex automorphisms.

\section{Notations}

In this section we collect some basic facts about \textit{ad{\`e}les} and \textit{adelic elliptic curves},  mainly to fix our notations.

\subsection{Completions}

Here and subsequently,, $\K$  denotes a quadratic imaginary number field and $R$ denotes   the ring of algebraic integers of $\K$. We  denote   
  by $\mathcal{I}=\mathcal{I}_{R}$ the set of (integral) ideals of $R$, by $\mathcal{J}=\mathcal{J}_{R}$ the group of fractional  ideals of $R$, freely generated by the primes (\cite{Mar}, p.~91), and by $Cl(R)$ the \textit{ideal class group}   of $R$.
  
   As usual 
$$\hat{R}=\varprojlim_{I\in\mathcal{I}}\frac{R}{I}\subset \A$$ will be the completion of $R$, $\hat{R}^*$ the group of invertible elements of $\hat{R}$   and $\A = \A_{\K,f}$    the ring of finite ad{\`e}les of $\K$.
Recall that
$$I^{-1}= \{ x\in \K \mid xI\subset R \}\in \mathcal{J} $$
is a fractional ideal s.t.~$I \cdot I^{-1}=R$ . 
If $\Lambda \in \mathcal{J}$ is a fractional ideal of $\K$ then $\frac{I^{-1}\Lambda}{\Lambda}$ can be identified with a submodule of $\frac{\K}{\Lambda}$:

$$\frac{I^{-1}\Lambda}{\Lambda}=\left\{x\in \frac{\K}{\Lambda}\simeq \frac{\A}{\hat{\Lambda}} \;\bigg|\; xI=0 \right\}, \hskip2mm \hat{\Lambda}:=\Lambda\cdot \hat{R}\subset \A.$$
\begin{remark}
It is well known (see e.g.~\cite{Sil}, II Proposition 1.4) that $\frac{I^{-1}\Lambda}{\Lambda}$ is a free $\frac{R}{I}$-module of rank $1$. 
It is standard  to deduce from this fact  that, even though $\Lambda$ is a \textit{projective but usually non-free} $R$-module, the completion        
\begin{equation}
\label{lambdahatfree}  \hat{\Lambda}\simeq \varprojlim_{I\in\mathcal{I}}\frac{I^{-1}\Lambda}{\Lambda} 
\end{equation}
is a free $\hat{R}$-module of rank $1$.
We denote by $\hat{\Lambda}^*\subset \hat{\Lambda}$ the set of \textit{$\hat{R}$-module generators}  of $\hat{\Lambda}$ which    is obviously an \textit{$\hat{R}^*$-torsor}.   

 Essentially by the same reason as above, for any pair of fractional ideals $\Lambda, \Gamma\in \mathcal{J}$ one also gets
\begin{equation}
\label{homisomorphism}
\Hom_{R}\!\left(\frac{\K}{\Lambda},\frac{\K}{\Gamma}\right)= 
\varprojlim_{I\in \mathcal{I}}\Hom_{\frac{R}{I}}\!\left(\frac{I^{-1}\Lambda}{\Lambda},\frac{I^{-1}\Gamma}{\Gamma}\right)
\simeq \varprojlim_{I\in \mathcal{I}}\Hom_{\frac{R}{I}}\!\left(\frac{R}{I},\frac{R}{I}\right)
= \hat{R},
\end{equation}
where the  isomorphism can be made explicit through the choice of any pair of $\hat{R}$-module generators:  $\lambda \in \hat{\Lambda}^*$, $\gamma \in \hat{\Gamma}^*$.   
\end{remark}  

\subsection{Adelic elliptic curves with complex multiplication}
\label{AECCM}

As above let $\K$ be a quadratic imaginary number field and let $\Lambda \subset \K$ be a fractional ideal of $\K$. Then $E=E_{\Lambda}:=\frac{\mathbb{C}}{\Lambda}$ is an elliptic curve with $\End(E)=R$ (\cite{Sil}, p.~99). Following \cite{Sil} p.~102, for $a\in R$ we denote by $E[a]$ the $(a)$-\textit{torsion points} of $E$:
$$E[a]:= \{ x\in E \mid \hskip1mm ax=0\}\simeq \frac{(a)^{-1}\Lambda}{\Lambda}$$
(compare with \cite{Sil}, Proposition 1.4).
It is a standard fact that the  \textit{Barsotti-Tate module}
$$T(E):=\varprojlim E[a] $$  is a free $\hat{R}$-module of rank 1.
Similarly, we denote by $T(E)^*$ the $\hat{R}^*$-torsor   of $\hat{R}$-module generators  of $T(E)$.  
  
Now we imitate \cite{Theta3} Definition 4.1 to define an \textit{adelic version} of $E$ taking  into account its complex multiplication structure.
\begin{definition}
We define the \textit{adelic elliptic curve} associated to $E$ as
$$V(E):=\left\{(x_a)_{a\in R} \;\Big|\; \frac{a}{b}x_a=x_b, \text{ if } \, b\mid a, \; x_1\in E_{tor} \right\}.$$\par\noindent 
This is equipped with projections
$$\nu_b: V(E) \rightarrow E_{tor},\hskip3mm (x_a)_{a\in R}\rightarrow x_b.$$
By \cite{Theta3}, pp.~48-49, we have
\begin{equation}\label{tensor}
V(E)\simeq T(E)\otimes_{\mathbb{Z}}\Q
\end{equation}
and
\begin{equation}\label{exactsequence}
0\longrightarrow T(E) \longrightarrow V(E) \stackrel{\nu_1}{\longrightarrow}    E_{tor} \longrightarrow 0.   
\end{equation}
\end{definition}
\par\noindent
  
\begin{remark}\label{parametrization} ~
\begin{enumerate}\itemsep=3pt
\item If we fix an $\hat{R}$-module generator $u\in T(E)^*$ then \eqref{homisomorphism} implies that  any morphism $\phi \in \Hom_R\big(\frac{\K}{R},\frac{\K}{\Lambda}\big)$ can be represented by an element $\psi \in \hat{R}$. Then the multiplication by $\psi$ gives rise by \eqref{tensor} and \eqref{exactsequence} to     a commutative diagram:\vspace{3pt}
 \begin{center}
 \begin{tikzpicture}[description/.style={fill=white,inner sep=4pt},normal line/.style={->,font=\footnotesize,shorten >=2pt,shorten <=2pt}]
 \matrix (m) [matrix of math nodes, row sep=2.5em, column sep=3em, text height=1.5ex, text depth=0.25ex]
 {  0 & \hat{R} & \A & \frac{\mathbb{K}}{R} & 0 \\
    0 & T(E)  & V(E) & \frac{\mathbb{K}}{\Lambda} & 0 \\ };
 \path[normal line] (m-1-1) edge (m-1-2);
 \path[normal line] (m-1-2) edge (m-1-3);
 \path[normal line] (m-1-3) edge (m-1-4);
 \path[normal line] (m-1-4) edge (m-1-5);
 \path[normal line] (m-2-1) edge (m-2-2);
 \path[normal line] (m-2-2) edge (m-2-3);
 \path[normal line] (m-2-3) edge (m-2-4);
 \path[normal line] (m-2-4) edge (m-2-5);
 \path[normal line] (m-1-2) edge node[left] {$\psi$} (m-2-2);
 \path[normal line] (m-1-3) edge node[left] {$\psi$} (m-2-3);
 \path[normal line] (m-1-4) edge node[left] {$\phi$} (m-2-4);
 \end{tikzpicture}
 \end{center}
By the snake lemma  $\phi $ is an isomorphism if{}f $\psi$ belongs to $T(E)^*$.
\item
In particular, any choice of a $\hat{R}$-module generator $u\in  T(E)^*$  gives rise to a commutative diagram of $\hat{R}$-modules: \vspace{3pt}
 \begin{center}
 \begin{tikzpicture}[description/.style={fill=white,inner sep=4pt},normal line/.style={->,font=\footnotesize,shorten >=2pt,shorten <=2pt}]
 \matrix (m) [matrix of math nodes, row sep=2em, column sep=2.5em, text height=1.5ex, text depth=0.25ex]
 {  0 & \hat{R} & \A & \frac{\K}{R} & 0 \\
    0 & T(E) & V(E) & E_{tor} & 0 \\ };
 \path[normal line] (m-1-1) edge (m-1-2);
 \path[normal line] (m-1-2) edge (m-1-3);
 \path[normal line] (m-1-3) edge (m-1-4);
 \path[normal line] (m-1-4) edge (m-1-5);
 \path[normal line] (m-2-1) edge (m-2-2);
 \path[normal line] (m-2-2) edge (m-2-3);
 \path[normal line] (m-2-3) edge node[above] {$\nu_1$} (m-2-4);
 \path[normal line] (m-2-4) edge (m-2-5);
 \path[normal line,<->] (m-1-2) edge (m-2-2);
 \path[normal line,<->] (m-1-3) edge (m-2-3);
 \path[normal line,<->] (m-1-4) edge (m-2-4);
 \end{tikzpicture}
 \end{center}
\end{enumerate}
 \end{remark}

\section[The moduli space of K-lattices.]{The moduli space of \texorpdfstring{$\mathbb{K}$}{K}-lattices.}

Let us recall the following (see \cite{KMS}, Def.~4.1):
\begin{definition}~
\begin{enumerate}\itemsep=3pt
\item
A 1-dimensional $\K$ lattice $(\Lambda,\phi)$ is a finitely generated  $R$-submodule $\Lambda\subset \mathbb{C}$ s.t.~$\Lambda \otimes _R \K\simeq \K$, together with an $R$-morphism $\phi: \frac{\K}{R}\to \frac{\K\Lambda}{\Lambda}.$ 
\item
An invertible 1-dimensional $\K$ lattice $(\Lambda,\phi)$ is a finitely generated  $R$-submodule $\Lambda\subset \mathbb{C}$ s.t.~$\Lambda \otimes _R \K\simeq \K$, together with an $R$-isomorphism $\phi: \frac{\K}{R}\to \frac{\K\Lambda}{\Lambda}.$ 
\end{enumerate} 
\end{definition}\noindent
Since for any finitely generated  $R$-submodule $\Lambda\subset \mathbb{C}$ there exists $k\in \mathbb{C}$ s.t.~$k\Lambda\subset \mathbb{K}$ (\cite{CMBook}, Lemma 3.111),
we
also give the following:
\begin{definition} An arithmetic (invertible) 1-dimensional $\K$ lattice $(\Lambda,\phi)$ is a finitely generated  $R$-submodule $\Lambda\subset \mathbb{K}$ s.t.~$\Lambda \otimes _R \K\simeq \K$, together with a $R$-morphism (isomorphism) $\phi: \frac{\K}{R}\to \frac{\K}{\Lambda}.$ 
\end{definition}

The following result is an immediate consequence of Remark \ref{parametrization}.
\begin{theorem}
\label{sesm}
Any morphism $\phi \in \Hom_R\big(\frac{\K}{R},\frac{\K}{\Lambda}\big)$ gives rise to map of short exact sequences:
 \begin{center}
 \begin{tikzpicture}[description/.style={fill=white,inner sep=4pt},normal line/.style={->,font=\footnotesize,shorten >=4pt,shorten <=4pt}]
 \matrix (m) [matrix of math nodes, row sep=2.5em, column sep=2.5em, text height=1.5ex, text depth=0.25ex]
 {  0 & \hat{R} & \A & \frac{\mathbb{K}}{R} & 0 \\
    0 & T(E)  & V(E) & \frac{\mathbb{K}}{\Lambda} & 0 \\ };
 \path[normal line] (m-1-1) edge (m-1-2);
 \path[normal line] (m-1-2) edge (m-1-3);
 \path[normal line] (m-1-3) edge (m-1-4);
 \path[normal line] (m-1-4) edge (m-1-5);
 \path[normal line] (m-2-1) edge (m-2-2);
 \path[normal line] (m-2-2) edge (m-2-3);
 \path[normal line] (m-2-3) edge (m-2-4);
 \path[normal line] (m-2-4) edge (m-2-5);
 \path[normal line] (m-1-2) edge (m-2-2);
 \path[normal line] (m-1-3) edge (m-2-3);
 \path[normal line] (m-1-4) edge (m-2-4);
 \end{tikzpicture}
 \end{center}
\par\noindent
where $E:=\frac{\C}{\Lambda}$.
The corresponding $\K$-lattice is invertible if{}f the vertical maps are isomorphisms.
In particular any $\phi \in \Hom_R\big(\frac{\K}{R},\frac{\K}{\Lambda}\big)$ corresponds to a point of $x_{\phi}\in T(E)$ (the image of 
$1\in \hat{R}$) and the corresponding  $\K$-lattice is invertible if{}f $x_{\phi}$ belongs to $T(E)^*$.
\end{theorem}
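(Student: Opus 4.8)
The statement is flagged as an immediate consequence of Remark \ref{parametrization}, so my plan is to reduce each of its three assertions to the bookkeeping already carried out there, adding only the translation into the language of $\K$-lattices. First I would fix once and for all a generator $u \in T(E)^*$; by the isomorphism \eqref{homisomorphism} (applied with source module $R$ and target module $\Lambda$) the given morphism $\phi \in \Hom_R\big(\frac{\K}{R},\frac{\K}{\Lambda}\big)$ is then represented by a well-defined element $\psi \in \hat{R}$. I would also record at the outset that, since $\Lambda$ is a fractional ideal of $\K$, one has $\K\cdot\Lambda = \K$ and hence $E_{tor} = \frac{\K}{\Lambda}$, so that the bottom row of the desired diagram is exactly the adelic sequence \eqref{exactsequence} of $E = \frac{\C}{\Lambda}$.

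For the first assertion I would simply invoke Remark \ref{parametrization}(1): multiplication by $\psi$ produces the commutative diagram with exact rows $0 \to \hat{R} \to \A \to \frac{\K}{R} \to 0$ and $0 \to T(E) \to V(E) \to \frac{\K}{\Lambda} \to 0$, the vertical arrows being, under the identifications $T(E)\cong\hat{R}$ and $V(E)\cong\A$ induced by $u$, multiplication by $\psi$ in the two left columns and $\phi$ on the right. These vertical arrows are nothing but the maps $T(\phi)$ and $V(\phi)$ functorially induced by $\phi$ on the Barsotti--Tate and adelic modules, which makes the diagram canonical and justifies calling it a map of short exact sequences.

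For the invertibility claims I would argue by a chain of equivalences. By definition $(\Lambda,\phi)$ is invertible precisely when $\phi$ is an $R$-isomorphism; by the snake-lemma computation already performed in Remark \ref{parametrization}(1) this holds if{}f $\psi \in T(E)^*$, i.e.\ if{}f $\psi$ is an $\hat{R}$-module generator, equivalently a unit of $\hat{R}$ under the identification $T(E)\cong\hat{R}$. Since multiplication by a unit is an automorphism of both $\hat{R}$ and $\A$, the condition $\psi \in T(E)^*$ is in turn equivalent to all three vertical arrows being isomorphisms, which yields the second assertion. Finally, taking $x_\phi$ to be the image of $1\in\hat{R}$ under the left vertical arrow, that is $x_\phi = T(\phi)(1) = \psi\cdot u$, one gets $x_\phi \in T(E)^*$ if{}f $\psi$ is a unit, which by the previous step is exactly the invertibility of the lattice.

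The only point requiring a little care --- the single genuine step, rather than pure bookkeeping --- is the equivalence between $\phi$ being an isomorphism and $\psi$ being a generator of $T(E)$; but this is precisely the snake-lemma statement established in Remark \ref{parametrization}(1), so here I would just cite it. I would also note that describing $x_\phi$ intrinsically as $T(\phi)(1)$ shows it to be independent of the auxiliary choice of $u$, so that no separate well-definedness check is needed.
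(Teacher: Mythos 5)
Your proposal is correct and follows exactly the paper's route: the paper offers no separate proof, simply declaring the theorem an immediate consequence of Remark \ref{parametrization}, and your write-up just makes explicit the same reduction (representing $\phi$ by $\psi\in\hat{R}$ via \eqref{homisomorphism}, citing the snake-lemma equivalence $\phi$ iso $\Leftrightarrow$ $\psi\in T(E)^*$, and translating into invertibility of the lattice and the point $x_\phi$). Nothing is missing; your added observation that $x_\phi = T(\phi)(1)$ is independent of the choice of $u$ is a useful touch beyond what the paper records.
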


Following \cite{Sil}, pag 98, we denote by $\mathcal{ELL}(R)$ the moduli space of elliptic curves with $\End(E)\simeq R$. Then (\cite{Sil}, II Proposition 1.2) $\mathcal{ELL}(R)$ is a $Cl(R)$-torsor.

\begin{definition}~
\begin{enumerate}\itemsep=2pt 
\item Let $\rho\in \A^*$ be an \textit{id{\`e}le}. Recall that the \textit{ideal of} $\rho$ (\cite{Sil} p.~119) is the fractional ideal
$$ (\rho):= \prod_{\mathfrak{p}} \mathfrak{p}^{ord_{\mathfrak{p}}\rho_{\mathfrak{p}}}\in \jc$$
(where the product is over all the primes of $R$) and the \textit{multiplication by}
$\rho$ map is defined as
$$ \rho: \jc \longrightarrow \jc, \hskip3mm \Lambda \longrightarrow (\rho)\Lambda.$$
By an abuse of notations, we use the same symbol for the corresponding map on $Cl(R)$
$$ \rho: Cl(R)  \longrightarrow Cl(R), \hskip3mm [\Lambda] \longrightarrow \rho[\Lambda]:=[(\rho)\Lambda],$$
and on $\mathcal{ELL}(R)$
$$ \rho: \mathcal{ELL}(R)  \longrightarrow \mathcal{ELL}(R), \hskip3mm E_{[\Lambda]} \longrightarrow E_{\rho [\Lambda]}.$$
\item We denote by $\ac$ ($\ac^*$) the set of (invertible) arithmetic $\K$-lattices and by $\lc $ 
($\lc^*$) the set of (invertible) $\K$-lattices modulo dilations. Fix an id{\`e}le
$\rho \in \As$ and a fractional ideal $\Lambda\in \jc$.  The \textit{multiplication by
$\rho$ map on $\frac{\K}{R}$} is defined by the commutativity of the following diagram (compare with \cite{Sil} p.~159)
 
 \begin{center}
 \begin{tikzpicture}[description/.style={fill=white,inner sep=4pt},normal line/.style={->,font=\footnotesize,shorten >=4pt,shorten <=4pt}]
 \matrix (m) [matrix of math nodes, row sep=2.5em, column sep=2.5em, text height=1.5ex, text depth=0.25ex]
{  \frac{\K}{R} & \frac{\K}{\rho R} \\
    \bigoplus_{\mathfrak{p}}\frac{\K_{\mathfrak{p}}}{R_{\mathfrak{p}}}  & \bigoplus_{\mathfrak{p}}\frac{\K_{\mathfrak{p}}}{\rho_{\mathfrak{p}} R_{\mathfrak{p}}} \\  };
 \path[normal line] (m-1-1) edge node[above] {$\rho$} (m-1-2);
 \path[normal line] (m-2-1) edge node[above] {$\oplus_{\mathfrak{p}} \rho_{\mathfrak{p}}\cdot$} (m-2-2);
 \path[normal line] (m-1-1) edge node[left] {} (m-2-1);
 \path[normal line] (m-1-2) edge (m-2-2);
 \end{tikzpicture}
\end{center}
and it can be viewed as an element of $ \Hom_{R}\big(\frac{\K}{R},\frac{\K}{\rho R}\big)\simeq \hat{R}$
(compare with \eqref{homisomorphism}), hence as a lattice $\Phi_{\rho}\in \ac$ (we also define $\Psi_{\rho}:=[\phi_{\rho}]\in \lc$).
Obviously such a lattice is invertible as the inverse of $\rho: \frac{\K}{R}\to \frac{\K}{\rho R}$ is provided by $\rho^{-1}\in \A^*$, 
thus $\Phi_{\rho}$ belongs to $ \ac ^*$ ($\Psi_{\rho}\in \lc^*$).
\end{enumerate}
\end{definition}

\begin{theorem}\label{arithKlattices}~
\begin{enumerate}\itemsep=3pt
\item The map $\Phi$ just defined 
$$
\Phi: \As \longrightarrow \ac^*, \hskip4mm \rho \longrightarrow \Phi_{\rho}
$$
is bijective.
\item   $\As \simeq \hat{R}\times_{\hat{R}^*}\ac^*\simeq \hat{R}\times_{\hat{R}^*}\As$. 
\end{enumerate}
\end{theorem}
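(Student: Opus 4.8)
The plan is to realise both $\As$ and $\ac^{*}$ as the \emph{same} bundle of $\hat{R}^{*}$-torsors over the group $\jc$ of fractional ideals, so that part (1) becomes a fibrewise comparison of torsors and part (2) follows by unwinding the contracted product over $\jc$.

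For (1) I would first fibre each side over $\jc$. On the id\`ele side, the ideal map $\rho\mapsto(\rho)$ fits into the standard exact sequence $1\to\hat{R}^{*}\to\As\to\jc\to 0$, whose fibre over $\Lambda$ consists of the id\`eles with $(\rho)=\Lambda$; identifying $\hat{\Lambda}=\Lambda\hat{R}$ as in \eqref{lambdahatfree}, this fibre is exactly the generator set $\hat{\Lambda}^{*}$, an $\hat{R}^{*}$-torsor. On the lattice side, sending an invertible arithmetic $\K$-lattice $(\Lambda,\phi)$ to $\Lambda$ fibres $\ac^{*}$ over $\jc$, and by Theorem \ref{sesm} together with \eqref{homisomorphism} the fibre over $\Lambda$ is the set of isomorphisms $\K/R\to\K/\Lambda$, which $\phi\mapsto x_{\phi}$ identifies with $T(E_{\Lambda})^{*}=\hat{\Lambda}^{*}$. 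Thus both sides are bundles of $\hat{R}^{*}$-torsors over $\jc$.

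Next I would check that $\Phi$ is a morphism of these bundles covering the identity of $\jc$. By construction $\Phi_{\rho}=((\rho),\phi_{\rho})$ with $\phi_{\rho}$ the multiplication-by-$\rho$ map, so $\Phi$ covers $\rho\mapsto(\rho)$; and for $u\in\hat{R}^{*}=\mathrm{Aut}(\K/R)$ one has $\phi_{\rho u}=\phi_{\rho}\circ u$, so $\Phi$ is $\hat{R}^{*}$-equivariant. The one genuinely non-formal point, which I expect to be the main obstacle, is to verify that under the identification of Theorem \ref{sesm} the point $x_{\phi_{\rho}}\in T(E_{\Lambda})$ is precisely the generator $\rho\in\hat{\Lambda}^{*}$; this is a local-global matching of the two descriptions of $\phi_{\rho}$, namely the componentwise prescription $\oplus_{\mathfrak{p}}\rho_{\mathfrak{p}}\cdot$ and the abstract element of $\Hom_{R}(\K/R,\K/\Lambda)\simeq\hat{R}$. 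Granting it, $\Phi$ restricts to a bijection between the torsor fibres over each $\Lambda$, and since it covers the identity on $\jc$ it is bijective.

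For (2) the second isomorphism is formal: part (1) supplies an $\hat{R}^{*}$-equivariant bijection $\ac^{*}\simeq\As$, and applying the contracted product functor $\hat{R}\times_{\hat{R}^{*}}(-)$ yields $\hat{R}\times_{\hat{R}^{*}}\ac^{*}\simeq\hat{R}\times_{\hat{R}^{*}}\As$. It therefore suffices to unwind the remaining contracted product fibrewise over $\jc$: on the fibre over $\Lambda$ the construction replaces the torsor $\hat{\Lambda}^{*}$ by $\hat{R}\times_{\hat{R}^{*}}\hat{\Lambda}^{*}$, and the multiplication map $(r,\lambda)\mapsto r\lambda$ identifies the latter with the full free module $\hat{\Lambda}=T(E_{\Lambda})$ (bijectivity because $\lambda$ is a generator). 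Assembling over $\jc$, the factor $\hat{R}$ completes each generator-torsor to the full module, reproducing precisely the parametrisation of arithmetic $\K$-lattices by pairs $(\Lambda,x_{\phi})$ with $x_{\phi}\in T(E_{\Lambda})$ given by Theorem \ref{sesm}; this is the content of the first isomorphism. Throughout I would keep every identification $\hat{R}^{*}$-equivariant, since it is exactly this equivariance that legitimises the contracted product manipulations.
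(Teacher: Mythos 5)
Your proposal is correct. For part (1) it is essentially the paper's own argument in a more structured packaging: the paper proves surjectivity by combining the surjectivity of the ideal map $\rho\mapsto(\rho)R$ with the fact (via Theorem \ref{sesm}) that the invertible lattices over a fixed fractional ideal form an $\hat{R}^*$-torsor, and proves injectivity by observing that $\Phi_{\rho}=\Phi_{\rho'}$ forces $\rho/\rho'$ to represent the identity in $\Hom_{R}\big(\frac{\K}{\Lambda},\frac{\K}{\Lambda}\big)\simeq\hat{R}$, hence $\rho/\rho'=1$. Your formulation --- an $\hat{R}^*$-equivariant map of torsor bundles over $\jc$ covering the identity is automatically bijective --- is the same mechanism; note that for bare bijectivity the equivariance $\phi_{\rho u}=\phi_{\rho}\circ u$ already suffices, so the ``non-formal point'' you flag (the local-global identification $x_{\phi_{\rho}}=\rho$) is not strictly needed for (1), though it is the computation hiding inside the paper's injectivity step. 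Where you genuinely diverge is part (2): the paper disposes of it in one line by citing Proposition 4.6 of \cite{KMS}, whereas you unwind the contracted product fibrewise, $\hat{R}\times_{\hat{R}^*}\hat{\Lambda}^*\simeq\hat{\Lambda}=T(E_{\Lambda})$ via $(r,\lambda)\mapsto r\lambda$, and reassemble over $\jc$ using Theorem \ref{sesm}; this makes the proof self-contained and is a worthwhile alternative. Be aware that your unwinding produces $\ac\simeq\hat{R}\times_{\hat{R}^*}\ac^*$, i.e.\ you have tacitly (and, I believe, correctly) read the first ``$\As$'' in item (2) as a misprint for $\ac$: taken literally, that isomorphism together with (1) would identify $\ac^*$ with the strictly larger set $\ac$, since the contracted product completes each generator-torsor $\hat{\Lambda}^*$ to the full module $\hat{\Lambda}$ and therefore parametrizes all arithmetic lattices, not only the invertible ones. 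Your reading is the one consistent with Corollary \ref{Klattices}(2) and with the content of the cited proposition of \cite{KMS}.
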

\begin{proof}
(1) Surjectivity: the multiplication  map
$$ \rho: \As   \longrightarrow \jc, \hskip3mm \rho  \longrightarrow (\rho)R$$
is obviously surjective. Furthermore, any two invertible lattices in $\Hom_{R}\big(\frac{\K}{R},\frac{\K}{\rho R}\big)$ differ (by Theorem \ref{sesm}) by an element of $T(E)^*$, which is an $\hat{R}^*$-torsor
(compare with Section \ref{AECCM}).

Injectivity: if $\Phi_{\rho}=\Phi_{\rho'}$ then $\rho R=\rho'R:= \Lambda $ and $\frac{\rho}{\rho'}=id \in \Hom_{R}\big(\frac{\K}{\Lambda},\frac{\K}{\Lambda}\big)$, hence
$\frac{\rho}{\rho'}=1\in \As$ because of Theorem \ref{sesm}.
\par\noindent
(2) Just combine (1) with Proposition 4.6 of \cite{KMS}.
\end{proof}

\bigskip
Taking quotients by $\K^*$ in the theorem above we also have

\begin{corollary}\label{Klattices}~
\begin{enumerate}\itemsep=3pt
\item The map 
$$
\psi: \As/\K^* \longrightarrow \lc^*, \hskip4mm [\rho] \longrightarrow \psi_{\rho},
$$
defined on the \textit{id{\`e}le class group} $\As/\K^*$ (\cite{ANT} p.~142), is
bijective and the projection $\lc^* \rightarrow Cl(R)$ coincides with the usual Class Field map (\cite{ANT}, p.~224).
\item   $\lc \simeq \hat{R}\times_{\hat{R}^*}\lc^*\simeq \hat{R}\times_{\hat{R}^*}(\As/\K^*)$. 
\end{enumerate}
\end{corollary}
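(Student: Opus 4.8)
The plan is to deduce both statements from Theorem \ref{arithKlattices} by passing to $\K^*$-orbits; the only genuine content is the identification of ``modulo dilations'' with ``modulo $\K^*$'' on the arithmetic models, after which everything is a formal quotient. First I would record the reductions $\lc^* \simeq \ac^*/\K^*$ and $\lc \simeq \ac/\K^*$. Surjectivity of the natural maps $\ac^* \to \lc^*$ and $\ac \to \lc$ is exactly the cited fact (\cite{CMBook}, Lemma~3.111) that every finitely generated $R$-submodule of $\C$ becomes a fractional ideal after multiplication by a suitable $k \in \C^*$, so every $\K$-lattice is dilation-equivalent to an arithmetic one. For injectivity up to $\K^*$, suppose two arithmetic lattices $(\Lambda,\phi)$ and $(\Lambda',\phi')$ are dilation-equivalent, say $\Lambda' = k\Lambda$ and $\phi' = k\phi$ for some $k \in \C^*$; choosing $0 \neq \lambda \in \Lambda \subset \K$ gives $k\lambda \in \Lambda' \subset \K$, whence $k = (k\lambda)/\lambda \in \K^*$. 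Thus dilation-equivalence of arithmetic lattices is precisely $\K^*$-equivalence.

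Next, for (1), I would check that the bijection $\Phi$ of Theorem \ref{arithKlattices} is $\K^*$-equivariant, where $\K^*$ acts on $\As$ through the diagonal embedding $\K^* \hookrightarrow \As$ and on $\ac^*$ by dilation. Concretely, dilating $\Phi_\rho$ (whose lattice is the fractional ideal $(\rho)$ and whose map is multiplication by $\rho$) by $k \in \K^*$ produces the lattice $k(\rho) = (k)(\rho) = (k\rho)$ and the map obtained by post-composing $\oplus_{\mathfrak{p}}\rho_{\mathfrak{p}}\cdot$ with $\oplus_{\mathfrak{p}} k\cdot$; since the principal id\`ele has constant components, this equals $\oplus_{\mathfrak{p}}(k\rho)_{\mathfrak{p}}\cdot$, i.e.\ $\Phi_{k\rho}$. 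Hence $\Phi$ descends to a bijection $\psi\colon \As/\K^* \to \ac^*/\K^* \simeq \lc^*$. Finally, the projection $\lc^* \to Cl(R)$ sends $\psi_\rho = \Phi_\rho \bmod \K^*$ to the ideal class $[(\rho)]$, so under $\psi$ it becomes $[\rho] \mapsto [(\rho)]$, which is exactly the natural surjection of the id\`ele class group onto $Cl(R)$, i.e.\ the Class Field map (\cite{ANT}, p.~224).

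For (2), I would quotient the contracted-product description of Theorem \ref{arithKlattices}(2) by $\K^*$. Using $\lc \simeq \ac/\K^*$ from the first paragraph and the fact that $\K^*$ acts inside the abelian group $\As$, so that its action commutes with the $\hat{R}^*$-balancing, the $\K^*$-quotient passes through the contracted product: $\ac/\K^* \simeq \hat{R}\times_{\hat{R}^*}(\ac^*/\K^*)$. By part (1) the inner quotient is $\ac^*/\K^* \simeq \lc^* \simeq \As/\K^*$, and the $\hat{R}^*$-equivariance of $\psi$ (rescaling the level $\phi$ by a unit corresponds to multiplication by $\hat{R}^* \subset \As$) lets me substitute it into the balanced product, giving $\lc \simeq \hat{R}\times_{\hat{R}^*}\lc^* \simeq \hat{R}\times_{\hat{R}^*}(\As/\K^*)$.

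The main obstacle is not any hard estimate but the bookkeeping of actions: one must pin down the precise dilation group (all of $\C^*$ on $\K$-lattices, reducing to $\K^*$ on arithmetic ones), verify that the diagonal embedding $\K^*\hookrightarrow\As$ is the identification making $\Phi$ equivariant, and confirm that the $\K^*$-quotient commutes with the $\hat{R}^*$-balanced product. Once these compatibilities are in place, both assertions follow formally from Theorem \ref{arithKlattices}.
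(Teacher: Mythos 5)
Your proposal is correct and matches the paper's approach exactly: the paper's entire ``proof'' of this corollary is the single sentence ``Taking quotients by $\K^*$ in the theorem above,'' and your argument is precisely that quotient carried out in detail (identifying dilation-equivalence of arithmetic lattices with $\K^*$-equivalence, checking $\K^*$-equivariance of $\Phi$, and passing the quotient through the $\hat{R}^*$-balanced product). The compatibility checks you supply are exactly the bookkeeping the paper leaves implicit.
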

\bigskip

\begin{lemma}
\label{commensurability}
Consider an arithmetic  1-dimensional $\K$ lattice $(\Lambda,\phi)$. Any 1-dimensional $\K$ lattice which is commensurable to $(\Lambda,\phi)$ is arithmetic.
\end{lemma}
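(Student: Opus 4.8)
The plan is to notice that both the hypothesis and the conclusion can be phrased purely in terms of the $\K$-span $\K\Lambda\subset\C$, and that commensurability preserves this span by its very definition. First I would recall from \cite{KMS} that two $1$-dimensional $\K$-lattices $(\Lambda,\phi)$ and $(\Lambda',\phi')$ are commensurable precisely when $\K\Lambda=\K\Lambda'$ as $\K$-subspaces of $\C$ and $\phi\equiv\phi'$ modulo $\Lambda+\Lambda'$. The only feature of this definition that I need is the equality of lines $\K\Lambda=\K\Lambda'$ inside $\C$; the congruence condition on $\phi,\phi'$ plays no role here.

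Next I would record the elementary characterization of arithmeticity: a $1$-dimensional $\K$-lattice $(\Lambda,\phi)$ is arithmetic if and only if $\K\Lambda=\K$, where $\K$ denotes its canonical copy inside $\C$. Indeed, if $\Lambda\subset\K$ then, choosing any nonzero $\lambda\in\Lambda$ (which exists since $\Lambda\otimes_R\K\simeq\K\neq 0$), one has $\K=\K\lambda\subseteq\K\Lambda\subseteq\K$, whence $\K\Lambda=\K$; conversely $\K\Lambda=\K$ forces $\Lambda\subseteq\K\Lambda=\K$.

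Combining the two observations finishes the argument. Starting from an arithmetic $(\Lambda,\phi)$ and a commensurable lattice $(\Lambda',\phi')$, I get $\K\Lambda'=\K\Lambda=\K$, so $\Lambda'\subseteq\K\Lambda'=\K$; and since now $\K\Lambda'/\Lambda'=\K/\Lambda'$, the morphism $\phi'$ automatically has the required target, so $(\Lambda',\phi')$ satisfies the definition of an arithmetic $\K$-lattice. The only point that genuinely requires care is the bookkeeping of the definitions — reading \emph{commensurable} as carrying the equality $\K\Lambda=\K\Lambda'$, and interpreting $\K\Lambda$ as an honest $\K$-line in $\C$ so that arithmeticity becomes exactly the assertion that this line is the distinguished copy of $\K$. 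Once this is pinned down, no further computation is needed.
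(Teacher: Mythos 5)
Your proof is correct, but it takes a genuinely different route from the paper's. You work directly with the Connes--Marcolli--Ramachandran definition of commensurability, namely the equality of spans $\K\Lambda=\K\Lambda'$ (the congruence condition on $\phi,\phi'$ indeed playing no role), together with the observation that arithmeticity of a nonzero lattice is equivalent to the single equation $\K\Lambda=\K$; the conclusion $\Lambda'\subseteq\K\Lambda'=\K\Lambda=\K$ is then immediate. The paper instead works with the finite-index formulation: commensurability gives that $\Lambda\cap\Lambda'$ has finite index in $\Lambda$ and in $\Lambda'$, hence in $\Lambda+\Lambda'$, so there is an integer $n$ with $n(\Lambda+\Lambda')\subseteq\Lambda\cap\Lambda'$, and then $n\Lambda'\subseteq\Lambda\cap\Lambda'\subseteq\Lambda\subseteq\K$ forces $\Lambda'\subseteq\frac{1}{n}\K=\K$. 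The two formulations are equivalent (two finitely generated $R$-submodules of $\C$ of $\K$-rank one have equal $\K$-spans if and only if their intersection has finite index in each), so both arguments stand; neither the paper nor you proves this equivalence, but each proof is self-consistent with the definition it invokes. Your version is the more economical one and isolates the real point --- arithmeticity depends only on the $\K$-span, which is a commensurability invariant --- whereas the paper's version produces an explicit denominator $n$ clearing $\Lambda'$ into $\Lambda\cap\Lambda'$, an integrality statement in the same spirit as the finite-index manipulations it uses later, e.g.\ in Lemma \ref{multiplication} and Definition \ref{groupoid}.
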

\begin{proof}
Assume that $(\Lambda', \psi)$ is commensurable to $(\Lambda,\phi)$. Then both the natural projections $\Lambda \rightarrow \Lambda \cap \Lambda'$, $\Lambda' \rightarrow \Lambda \cap \Lambda'$ have finite index hence the same happens for
$\Lambda + \Lambda'\rightarrow \Lambda \cap \Lambda'$. So there exists an integer $n$ s.t.~$n(\Lambda + \Lambda')\subset \Lambda \cap \Lambda'$. The thesis follows since 
$n\Lambda' \subset n(\Lambda + \Lambda')\subset \Lambda \cap \Lambda' \subset \K$.  
\end{proof}

\begin{notations} We denote by $S$ the set of \textit{non-archimedean places of $\K$} (\cite{LocF}, p.~189). For any $v\in S$ we denote by $\mathfrak{p}_v\subset R$ the prime ideal corresponding to $v$ and
we choose a \textit{prime element}  $\pi_v\in \mathfrak{p}_v $ (\cite{LocF}, p.~42).
\end{notations}

\begin{lemma}
\label{multiplication}
Consider two fractional ideals $\Lambda $, $\Lambda' $,  assume $\Lambda \subset \Lambda'$ and set 
$\Lambda=  \prod_{v\in S} \mathfrak{p}_v^{e(v)} \cdot \Lambda'$.
For any $\gamma \in T(\frac{\C}{\Lambda'})^*$ there exists $\delta \in T(\frac{\C}{\Lambda})^*$ s.t. the natural map  
$\Hom_R\big(\frac{\K}{R},\frac{\K}{\Lambda}\big)\to\Hom_R\big(\frac{\K}{R},\frac{\K}{\Lambda'}\big)$ induced by  projection is represented, via isomorphisms stated in \eqref{homisomorphism}, by the multiplication  by $\prod_{v\in S} \pi_v^{e(v)}$.
\end{lemma}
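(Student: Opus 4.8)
The plan is to transport everything into the adelic picture of Section \ref{AECCM} and reduce the statement to an explicit comparison of two $\hat R$-module generators of the rank-one free module $\hat\Lambda$. First I would recall that, by \eqref{lambdahatfree} together with the cofinality of principal ideals in $\ic$, one has $T(\tfrac{\C}{\Lambda})\simeq\hat\Lambda$ and $T(\tfrac{\C}{\Lambda'})\simeq\hat{\Lambda'}$, so that $T(\tfrac{\C}{\Lambda})^*$ and $T(\tfrac{\C}{\Lambda'})^*$ are exactly the sets of $\hat R$-module generators of $\hat\Lambda\subset\A$ and of $\hat{\Lambda'}\subset\A$; in particular such generators are units of $\A$. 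Under the identifications $\tfrac{\K}{R}\simeq\tfrac{\A}{\hat R}$, $\tfrac{\K}{\Lambda}\simeq\tfrac{\A}{\hat\Lambda}$ and $\tfrac{\K}{\Lambda'}\simeq\tfrac{\A}{\hat{\Lambda'}}$, the content of \eqref{homisomorphism} (see also Remark \ref{parametrization}) is that, once generators $\delta\in\hat\Lambda^*$ and $\gamma\in\hat{\Lambda'}^*$ are fixed, a morphism in $\Hom_R(\tfrac{\K}{R},\tfrac{\K}{\Lambda})$ corresponding to $\psi\in\hat R$ is the map $x+\hat R\mapsto\psi\delta x+\hat\Lambda$, and similarly on the $\Lambda'$ side with $\gamma$.

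Next I would compute the projection-induced map $p_*$ directly in these coordinates. The projection $p\colon\tfrac{\K}{\Lambda}\to\tfrac{\K}{\Lambda'}$ becomes the natural quotient $\tfrac{\A}{\hat\Lambda}\to\tfrac{\A}{\hat{\Lambda'}}$ (legitimate since $\Lambda\subset\Lambda'$ gives $\hat\Lambda\subset\hat{\Lambda'}$). Hence, for $\phi\leftrightarrow\psi$ as above, $p\circ\phi$ is the map $x+\hat R\mapsto\psi\delta x+\hat{\Lambda'}$, and writing $p\circ\phi\leftrightarrow\psi'$ means $\psi\delta x\equiv\psi'\gamma x\pmod{\hat{\Lambda'}}$ for all $x\in\A$. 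Setting $b:=\psi\delta-\psi'\gamma$, this forces $b\A\subset\hat{\Lambda'}=\gamma\hat R$, i.e.\ $(b\gamma^{-1})\A\subset\hat R$; since $\A=\hat R\otimes_{\mathbb{Z}}\Q$ and $\bigcap_{n}n\hat R=0$, one gets $b=0$ and therefore $\psi'=(\delta\gamma^{-1})\psi$. Thus, whatever the generators, $p_*$ is identified with multiplication by $\delta\gamma^{-1}\in\hat R$, the inclusion $\delta\gamma^{-1}\in\hat R$ being precisely the statement $\hat\Lambda\subset\hat{\Lambda'}$.

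It then remains to choose $\delta$ so that $\delta\gamma^{-1}$ equals the prescribed element $c:=\prod_{v\in S}\pi_v^{e(v)}$; equivalently, to set $\delta:=c\,\gamma$ and check that it lies in $\hat\Lambda^*$. For this I would use $\hat\Lambda=\Lambda\hat R=\mathfrak a\,\Lambda'\hat R=\mathfrak a\hat R\cdot\hat{\Lambda'}$ with $\mathfrak a:=\prod_{v\in S}\mathfrak p_v^{e(v)}$, so that $\delta=c\gamma$ generates $\hat\Lambda$ if and only if $c\hat R=\mathfrak a\hat R$. The latter is a purely local verification: decomposing $\hat R=\prod_{v}R_v$, at each place $v$ the ideal $\mathfrak p_v\hat R$ is the maximal ideal $\mathfrak m_v$ in the $v$-th factor and is $R_w$ elsewhere, while $c$ has $v$-component $\pi_v^{e(v)}$, so both $c\hat R$ and $\mathfrak a\hat R$ equal $\prod_v\mathfrak m_v^{e(v)}$. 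Hence $\delta=c\gamma\in\hat\Lambda^*$ and, by the previous paragraph, $p_*$ is multiplication by $c=\prod_{v\in S}\pi_v^{e(v)}$, as desired.

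The only genuinely delicate point is this last local--global comparison $c\hat R=\mathfrak a\hat R$, which is exactly where the passage from the global integral ideal $\prod_v\mathfrak p_v^{e(v)}$ to the finite idèle $\prod_v\pi_v^{e(v)}$ assembled from the local uniformizers $\pi_v$ takes place; everything else is the bookkeeping of the $\hat R$-linear identification \eqref{homisomorphism} together with the elementary fact $\bigcap_n n\hat R=0$, which is what lets us upgrade a congruence modulo $\hat{\Lambda'}$ to an honest equality in $\A$.
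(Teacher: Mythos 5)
Your proof is correct and follows essentially the same route as the paper's: both identify the projection-induced map, via \eqref{homisomorphism}, with multiplication by an element of $\hat{R}$ generating the ideal $\prod_{v\in S}(\mathfrak{p}_v\hat{R})^{e(v)}=\big(\prod_{v\in S}\pi_v^{e(v)}\big)\hat{R}$, and then conclude by adjusting the generator of $\hat{\Lambda}$ by a unit of $\hat{R}$. The only difference is the direction of the bookkeeping: you construct $\delta:=\big(\prod_{v\in S}\pi_v^{e(v)}\big)\gamma$ directly and check it lies in $\hat{\Lambda}^*$ by the local computation $c\hat{R}=\mathfrak{a}\hat{R}$, whereas the paper starts from an arbitrary $\delta'\in T(\frac{\C}{\Lambda})^*$, uses $\mathrm{Ker}\simeq\Lambda'/\Lambda$ to factor the resulting representative as $\prod_{v\in S}\pi_v^{e(v)}\cdot\alpha'$ with $\alpha'\in\hat{R}^*$, and then corrects $\delta'$ by the unit $\alpha'$ --- the same mechanism, relying on the same local-global fact.
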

\begin{proof}
Fix $\delta' \in T(\frac{\C}{\Lambda})^*$ and assume that the map in $\Hom_R\big(\frac{\K}{\Lambda},\frac{\K}{\Lambda'}\big)$ induced by projection is represented by
$\alpha \in \hat{R}$ via \eqref{homisomorphism}.
Since $Ker(\alpha) \simeq \frac{\Lambda'}{\Lambda}\simeq \prod_{v\in S} \mathfrak{p}_v^{e(v)}$, we may assume
$\alpha = \prod_{v\in S} \pi_v^{e(v)}\alpha'$, with $\alpha' \in \hat{R}^*$. In order to conclude it suffices to choose $\delta =\delta' \cdot \alpha'$.
\end{proof}

We recall the following fact observed in the proof of Proposition 4.5 of \cite{KMS}:

\begin{lemma}\label{fact}
Two  invertible 1-dimensional  $\K$  lattices are commensurable if{}f they coincide.
\end{lemma}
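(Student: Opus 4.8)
The plan is to understand Lemma \ref{fact} as a statement about the uniqueness of invertible lattices within a commensurability class. Recall from Lemma \ref{commensurability} that commensurable lattices share the same commensurability type, and by the analysis following Theorem \ref{sesm} an invertible $\K$-lattice $(\Lambda,\phi)$ corresponds to the data of a fractional ideal $\Lambda$ together with a generator $x_\phi\in T(E)^*$, where $E=\C/\Lambda$. First I would set up two invertible $\K$-lattices $(\Lambda,\phi)$ and $(\Lambda',\phi')$ that are commensurable and show that commensurability already forces $\Lambda=\Lambda'$. The point is that two commensurable fractional ideals differing by a dilation are genuinely equal as submodules of $\K$ only if the dilation is trivial, so I would argue that invertibility pins down $\Lambda$ uniquely: if $\Lambda$ and $\Lambda'$ are commensurable fractional ideals, then $\Lambda\cap\Lambda'$ has finite index in both, and the invertibility of $\phi,\phi'$ (i.e.\ that $\phi,\phi'$ are \emph{isomorphisms} onto $\K/\Lambda$, $\K/\Lambda'$) should be incompatible with a proper inclusion.

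The key step is to translate the commensurability of the maps $\phi$ and $\phi'$ through the isomorphism \eqref{homisomorphism}. Suppose first $\Lambda\subset\Lambda'$; by Lemma \ref{multiplication} the projection-induced map on $\Hom_R(\K/R,\K/\Lambda)\to\Hom_R(\K/R,\K/\Lambda')$ is, after choosing suitable generators, multiplication by $\prod_{v\in S}\pi_v^{e(v)}$, where $\Lambda=\prod_{v}\mathfrak{p}_v^{e(v)}\cdot\Lambda'$. The commensurability of the two lattices means $\phi$ and $\phi'$ agree after passing to a common finite-index sublattice, which under \eqref{homisomorphism} translates into the statement that the two associated elements of $\hat R$ differ by the factor $\prod_v\pi_v^{e(v)}$. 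But invertibility forces both associated elements to lie in $\hat R^*$ (they correspond to generators in $T(E)^*$, by the last assertion of Theorem \ref{sesm}), and $\prod_v\pi_v^{e(v)}\in\hat R^*$ is possible only when all $e(v)=0$, i.e.\ $\Lambda=\Lambda'$. The general (non-nested) case reduces to this one by comparing each of $\Lambda,\Lambda'$ with $\Lambda\cap\Lambda'$.

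Once $\Lambda=\Lambda'$ is established, both lattices live over the same elliptic curve $E=\C/\Lambda$, so $\phi,\phi'$ correspond to generators $x_\phi,x_{\phi'}\in T(E)^*$; it remains to see that commensurability forces $x_\phi=x_{\phi'}$. Since the two maps now lie in the same $\Hom_R(\K/R,\K/\Lambda)\simeq\hat R$ and agree on a finite-index subgroup, their difference in $\hat R$ is annihilated by some ideal $I$, hence lies in $I\hat R$; but two \emph{generators} of $\hat\Lambda$ (equivalently, two units in $\hat R$) that are congruent modulo every such $I$ and generate the same torsor must coincide, because the torsion condition together with invertibility forces the ratio $x_\phi/x_{\phi'}\in\hat R^*$ to reduce to $1$ modulo every $I$, i.e.\ to equal $1$ in $\varprojlim R/I=\hat R$. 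This yields $\phi=\phi'$ and hence $(\Lambda,\phi)=(\Lambda',\phi')$.

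I expect the main obstacle to be the precise bookkeeping in the non-nested case and the careful use of invertibility to rule out the prime factors $\pi_v$: the heart of the matter is that a genuine isomorphism $\phi$ cannot factor through a proper sublattice, so any commensurability relation between two invertible lattices must be an \emph{equality} of the underlying data rather than merely a finite-index identification. Making the reduction from commensurable to nested, and then invoking Lemma \ref{multiplication} together with the invertibility characterization $x_\phi\in T(E)^*$ from Theorem \ref{sesm}, is the crux; the rest is a diagram chase through \eqref{homisomorphism}.
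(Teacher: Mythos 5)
Your core mechanism is the right one, and it is a genuine addition: the paper itself gives no proof of Lemma \ref{fact}, recalling it instead from the proof of Proposition 4.5 of \cite{KMS}, whereas you assemble a self-contained argument from the paper's own tools. Representing $\phi,\phi'$ by elements of $\hat{R}$ via \eqref{homisomorphism}, using Theorem \ref{sesm} to see that invertibility forces these elements to be units, and using Lemma \ref{multiplication} to see that the projection between nested lattices is multiplication by $\prod_v \pi_v^{e(v)}$, which is a unit of $\hat{R}$ iff all $e(v)=0$ --- this correctly disposes of the nested case $\Lambda\subset\Lambda'$, where commensurability literally says $\phi'=p\circ\phi$.

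The gap is in the reduction of the general case, and it stems from misstating what commensurability of $\K$-lattices means. It is \emph{not} that ``$\phi$ and $\phi'$ agree after passing to a common finite-index sublattice'': it is that the compositions with the projections onto the common \emph{overlattice} quotient $\K/(\Lambda+\Lambda')$ agree, i.e.\ the comparison goes through the sum $\Gamma=\Lambda+\Lambda'$, exactly as in the paper's Definition \ref{groupoid}. Your proposed reduction ``compare each of $\Lambda,\Lambda'$ with $\Lambda\cap\Lambda'$'' cannot be carried out: there is no $\K$-lattice structure on $\Lambda\cap\Lambda'$ commensurable with an invertible $(\Lambda,\phi)$ unless $\Lambda\cap\Lambda'=\Lambda$, since a lift of $\phi$ along $\K/(\Lambda\cap\Lambda')\to\K/\Lambda$ would be represented by some $\mu\in\hat{R}$ with $\alpha\mu=\rho\in\hat{R}^*$, impossible when $\alpha$ is a non-unit. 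Note also that even routing through the sum you cannot simply invoke your nested-case statement, because the common lattice $\bigl(\Gamma,\,p_\Lambda\circ\phi\bigr)$ is in general \emph{not} invertible; one must redo the computation: writing $\Lambda=I\Gamma$, $\Lambda'=I'\Gamma$, commensurability reads $\alpha_I\,\rho=\alpha_{I'}\,\rho'$ in $\hat{R}$ with $\rho,\rho'\in\hat{R}^*$, whence $I\hat{R}=I'\hat{R}$, so $I=I'$ and $\Lambda=\Lambda'$. Once this is done, your closing argument (a difference annihilated by an ideal, congruence ``modulo every such $I$'', passage to $\varprojlim R/I$) is both unjustified --- nothing supplies congruence modulo \emph{every} ideal --- and unnecessary: when $\Lambda=\Lambda'$ the sum is $\Lambda$ itself, the projections are identities, and the defining condition of commensurability is already the equality $\phi=\phi'$.
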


Furthermore, we get

\begin{lemma}\label{fact'}
A non-invertible arithmetic 1-dimensional $\K$ lattice is commensurable to an invertible one.
\end{lemma}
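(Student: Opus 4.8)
The plan is to run everything through the adelic parametrization of Theorem~\ref{sesm} and then feed it into Lemma~\ref{multiplication}. Set $E=\mathbb{C}/\Lambda$ and, after fixing a generator $\gamma\in T(E)^*$, use \eqref{homisomorphism} to identify the datum $\phi\in\Hom_R\big(\tfrac{\K}{R},\tfrac{\K}{\Lambda}\big)$ with an element $\alpha\in\hat R$; by Theorem~\ref{sesm} the lattice $(\Lambda,\phi)$ fails to be invertible precisely when the associated point $x_\phi$ is not a generator, i.e.\ when $\alpha\notin\hat R^*$. Decomposing $\hat R$ into the product of its local rings $R_{\mathfrak p_v}$ $(v\in S)$, I would record the local orders $e(v):=\mathrm{ord}_{\mathfrak p_v}(\alpha)$, which are almost all zero, and factor
\[
\alpha=\Big(\prod_{v\in S}\pi_v^{e(v)}\Big)\cdot\alpha',\qquad \alpha'\in\hat R^*.
\]

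Next I would produce the candidate invertible lattice by shrinking $\Lambda$ at the finitely many bad places. Put $\Lambda':=\prod_{v\in S}\mathfrak p_v^{e(v)}\,\Lambda\subseteq\Lambda$, a fractional ideal of finite index in $\Lambda$. Applying Lemma~\ref{multiplication} to the pair $(\Lambda',\Lambda)$ (playing the role of the lemma's $(\Lambda,\Lambda')$, since $\Lambda'=\prod_v\mathfrak p_v^{e(v)}\Lambda\subseteq\Lambda$) yields a generator $\delta\in T(\mathbb{C}/\Lambda')^*$ for which the projection-induced map $\Hom_R\big(\tfrac{\K}{R},\tfrac{\K}{\Lambda'}\big)\to\Hom_R\big(\tfrac{\K}{R},\tfrac{\K}{\Lambda}\big)$ becomes multiplication by $\prod_{v\in S}\pi_v^{e(v)}$ under \eqref{homisomorphism}. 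I would then let $\phi'$ be the morphism represented, with respect to $\delta$, by the unit $\alpha'$. Since $\alpha'\in\hat R^*$, Theorem~\ref{sesm} guarantees that $(\Lambda',\phi')$ is invertible, and by construction the image of $\phi'$ under the projection map is $\prod_{v}\pi_v^{e(v)}\alpha'=\alpha$, i.e.\ the composite $\tfrac{\K}{R}\xrightarrow{\phi'}\tfrac{\K}{\Lambda'}\to\tfrac{\K}{\Lambda}$ is exactly $\phi$.

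Finally I would check commensurability in the sense of \cite{KMS}. The underlying modules are commensurable because $\Lambda'\subseteq\Lambda$ has finite index (cf.\ the argument in Lemma~\ref{commensurability}), and in particular $\Lambda+\Lambda'=\Lambda$; the remaining condition $\phi\equiv\phi'\ (\mathrm{mod}\ \Lambda+\Lambda')$ is then precisely the equality of composites established above. Hence $(\Lambda,\phi)$ is commensurable to the invertible lattice $(\Lambda',\phi')$, as required.

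The one point demanding care — and the main obstacle — is the finiteness of the exponents $e(v)$, which is exactly what makes the factorization $\alpha=\prod_v\pi_v^{e(v)}\alpha'$ with $\alpha'$ a unit available. This holds if and only if $\alpha$ is a non-zero-divisor in $\hat R\simeq\prod_v R_{\mathfrak p_v}$, equivalently if and only if $\phi$ is injective. I would therefore make this non-degeneracy explicit at the outset: if some component $\alpha_v$ vanished, the $v$-adic part of $\phi$ could never arise as the projection of an isomorphism and no finite shrinking of $\Lambda$ would repair it, so the injectivity of $\phi$ is the genuinely substantive hypothesis under which the construction above goes through verbatim.
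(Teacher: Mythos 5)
Your construction coincides, step for step, with the paper's own proof: represent $\phi$ by $\alpha\in\hat{R}$ via \eqref{homisomorphism}, factor $\alpha=\big(\prod_{v\in S}\pi_v^{e(v)}\big)\cdot\alpha'$ with $\alpha'\in\hat{R}^*$, use Lemma \ref{multiplication} (with the roles of the two ideals swapped, as you note) to interpret $\alpha'$ as an invertible lattice on $\Lambda'=\prod_{v\in S}\mathfrak{p}_v^{e(v)}\Lambda\subset\Lambda$, and observe that the composite $\frac{\K}{R}\to\frac{\K}{\Lambda'}\to\frac{\K}{\Lambda}$ equals $\phi$, which, since $\Lambda'\subseteq\Lambda=\Lambda+\Lambda'$ has finite index, is exactly commensurability; your verification of this last point is more explicit than the paper's one-line argument. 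The genuine problem, common to your write-up and to the paper's proof, is the factorization itself. For an arbitrary $\alpha=(\alpha_v)_{v}\in\hat{R}\simeq\prod_{v}R_{\mathfrak{p}_v}$ it requires \emph{two} conditions: $\alpha_v\neq 0$ for every $v$ (so that each $e(v)$ is finite), \emph{and} $e(v)=0$ for almost all $v$ (so that $\prod_v\mathfrak{p}_v^{e(v)}$ is a genuine ideal). Your parenthetical claim that the orders $e(v)$ ``are almost all zero'' is unjustified: nothing in the definition of an arithmetic lattice forces this, and if $e(v)>0$ for infinitely many $v$ then $\prod_v\mathfrak{p}_v^{e(v)}$ is the zero ideal, $\Lambda'$ is not a fractional ideal, and the construction collapses.

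Your final paragraph then misdiagnoses the obstruction. First, ``$\alpha$ is a non-zero-divisor'' is \emph{not} equivalent to ``$\phi$ is injective'': on the divisible torsion module $\frac{\K_v}{R_v}$, multiplication by $\pi_v$ is surjective with kernel of order $\#(R/\mathfrak{p}_v)$, so $\phi$ is injective if and only if every $\alpha_v$ is a unit, i.e.\ if and only if $\alpha\in\hat{R}^*$ --- that is, if and only if $(\Lambda,\phi)$ is already invertible, which makes the lemma vacuous under your proposed hypothesis. Second, even the non-zero-divisor condition is insufficient, since it does not exclude infinitely many positive $e(v)$. The condition that actually makes both the construction and the statement work is that $\ker\phi$ be \emph{finite}, equivalently $\alpha\in\hat{R}\cap\As$ (all $\alpha_v\neq0$ and almost all $e(v)=0$). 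Without some such hypothesis Lemma \ref{fact'} is simply false: take $\phi=0$; if $(\Lambda,0)$ were commensurable to an invertible $(\Lambda',\phi')$, the composite $\frac{\K}{R}\xrightarrow{\phi'}\frac{\K}{\Lambda'}\to\frac{\K}{\Lambda+\Lambda'}$ would have to vanish, yet it has finite kernel (the $\phi'$-preimage of the finite group $\frac{\Lambda+\Lambda'}{\Lambda'}$). So the delicate point you sensed is real --- it also undermines the paper's own proof, and Corollary \ref{invertiblelattices} as stated --- but the substantive hypothesis is finiteness of $\ker\phi$, not injectivity.
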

\begin{proof}
Any arithmetic lattice is represented by means of \eqref{homisomorphism} by an element $\rho \in \hat{R}$ which can be written as $\rho= \alpha \cdot \rho'$, where
$\alpha$ is a suitable product of prime elements and $\rho'\in  \hat{R}^*$. Lemma \ref{multiplication} implies that $\rho'\in  \hat{R}^*$ can be interpreted as an invertible lattice for some $\Lambda'\subset \Lambda $.
\end{proof}

\begin{corollary}\label{invertiblelattices}~
 The set $\fc$ of arithmetic  1-dimensional $\K$  lattices modulo commensurability can be identified with $\bigcup_{\Lambda}T(E_{\Lambda})^*$.
\end{corollary}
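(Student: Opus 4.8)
The plan is to show that each commensurability class of arithmetic $\K$-lattices contains a \emph{unique} invertible representative, and then to identify the set $\ac^*$ of invertible arithmetic lattices with $\bigcup_\Lambda T(E_\Lambda)^*$ by means of Theorem \ref{sesm}. Concretely, I would construct the identification $\fc \simeq \bigcup_\Lambda T(E_\Lambda)^*$ by sending a commensurability class to the point $x_\phi \in T(E_\Lambda)^*$ attached to its invertible representative $(\Lambda,\phi)$, and then verify that this assignment is a well-defined bijection.

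For existence and uniqueness of the invertible representative I would assemble the three preceding lemmas. Lemma \ref{commensurability} ensures that the commensurability class of an arithmetic lattice consists entirely of arithmetic lattices, so that $\fc$ is well defined. Lemma \ref{fact'} shows that every such class meets the set of invertible lattices, since any non-invertible arithmetic lattice is commensurable to an invertible one. Lemma \ref{fact} shows that two invertible lattices are commensurable if and only if they coincide, whence the invertible representative of a class is unique. Together these give a canonical bijection between $\fc$ and $\ac^*$.

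It then remains to identify $\ac^*$ with $\bigcup_\Lambda T(E_\Lambda)^*$, and here I would invoke Theorem \ref{sesm}. For a fixed fractional ideal $\Lambda$, every $\phi \in \Hom_R\big(\frac{\K}{R},\frac{\K}{\Lambda}\big)$ corresponds to the point $x_\phi \in T(E_\Lambda)$ given by the image of $1 \in \hat{R}$, and the lattice $(\Lambda,\phi)$ is invertible precisely when $x_\phi \in T(E_\Lambda)^*$. Since $\phi \mapsto x_\phi$ is the bijection $\Hom_R\big(\frac{\K}{R},\frac{\K}{\Lambda}\big)\simeq \hat{R}$ of \eqref{homisomorphism} (after fixing a generator), the invertible arithmetic lattices with underlying ideal $\Lambda$ are in bijection with $T(E_\Lambda)^*$; letting $\Lambda$ range over all fractional ideals yields the union $\bigcup_\Lambda T(E_\Lambda)^*$.

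The assembly above is essentially formal, and the only point deserving care is the uniqueness of the invertible representative — that no class contains two distinct invertible lattices. This is exactly Lemma \ref{fact}, so the principal difficulty has in effect been resolved in advance; what remains is to organize these facts and to observe that the union over $\Lambda$ is disjoint, the groups $\frac{\C}{\Lambda}$ and hence the modules $T(E_\Lambda)$ being distinct for distinct fractional ideals $\Lambda$.
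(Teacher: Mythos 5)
Your proposal is correct and takes essentially the same route as the paper, whose entire proof is ``Just combine Lemmas \ref{fact} and \ref{fact'}'': existence of an invertible representative in each class comes from Lemma \ref{fact'}, uniqueness from Lemma \ref{fact}, and the identification of invertible arithmetic lattices over a fixed $\Lambda$ with $T(E_\Lambda)^*$ is exactly the content of Theorem \ref{sesm}, which the paper leaves implicit. Your version merely spells out these ingredients (plus the role of Lemma \ref{commensurability} in making $\fc$ well defined), which is a faithful expansion rather than a different argument.
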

\begin{proof} 
Just combine Lemmas \ref{fact} and \ref{fact'}.
\end{proof}

\begin{definition}
\label{groupoid}
Consider two fractional ideals $\Lambda$ and $\Lambda'$ and set $\Gamma =\Lambda + \Lambda'$. Denote by 
$p:\Hom_R\big(\frac{\K}{R},\frac{\K}{\Lambda}\big)\to \Hom_R\big(\frac{\K}{R},\frac{\K}{\Gamma}\big)$ and by 
$q:\Hom_R\big(\frac{\K}{R},\frac{\K}{\Lambda'}\big)\to\Hom_R\big(\frac{\K}{R},\frac{\K}{\Gamma}\big)$ the natural projection and consider the corresponding $\rho$ and $\rho'$ obtained by means of Lemma \ref{multiplication}. Then the lattices $(\Lambda, \phi)$ and $(\Lambda', \phi')$ are commensurable if{}f $p(\phi)=p'(\phi')$ so that the set of such commensurable lattices can be identified with the fiber product $\fc _{\Lambda, \Lambda'}\subset T(E_{\Lambda})\times T(E_{\Lambda'})$ arising from the Cartesian square:
 \begin{center}
 \begin{tikzpicture}[description/.style={fill=white,inner sep=4pt},normal line/.style={->,font=\footnotesize,shorten >=4pt,shorten <=4pt}]
 \matrix (m) [matrix of math nodes, row sep=3em, column sep=3.5em, text height=1.5ex, text depth=0.25ex]
 { \fc _{\Lambda , \Lambda '} & T(E_{\Lambda}) \\
   T(E_{\Lambda'})  & T(E_{\Gamma}) \\ };
 \path[normal line] (m-1-1) edge node[left] {$q$} (m-2-1);
 \path[normal line] (m-1-2) edge node[right] {$p$} (m-2-2);
 \path[normal line] (m-1-1) edge (m-1-2);
 \path[normal line] (m-2-1) edge (m-2-2);
 \end{tikzpicture}
 \end{center}
\end{definition}

In the notations of Definition \ref{groupoid},  Corollary \ref{invertiblelattices} implies the following:

\begin{theorem} 
\label{groupoidthm}
Fix a set of representatives $\Lambda _i$ of $Cl(R)$, 
 $1\leq i \leq \sharp Cl(R)$. Set 
$\ec= \{1,2,\dots ,\sharp Cl(R)\}\times \mathcal{J}$. Then
the groupoid of commensurability modulo dilations can be identified with:
$$\bigcup_{(\Lambda_i,\Lambda)\in\ec  }\fc _{\Lambda_i, \Lambda}\subset \bigcup_{(\Lambda_i,\Lambda)\in\ec }
T(E_{\Lambda_i})\times T(E_{\Lambda}).$$
\end{theorem}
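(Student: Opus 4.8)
The plan is to identify the arrows of the groupoid — commensurable pairs of $1$-dimensional $\K$-lattices taken modulo the simultaneous dilation action — by exhausting the dilation freedom so as to bring each arrow into a normal form whose source lattice has underlying module one of the fixed representatives $\Lambda_i$. I would carry this out in two reduction steps, followed by the fibre-product bookkeeping supplied by Definition \ref{groupoid}.

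First I would reduce to arithmetic lattices. Given a commensurable pair $\big((\Lambda,\phi),(\Lambda',\phi')\big)$, the module $\Lambda+\Lambda'$ is finitely generated over $R$, so there is $k\in\C^*$ with $k(\Lambda+\Lambda')\subset\K$; dilating by $k$ makes both members arithmetic at once, and Lemma \ref{commensurability} ensures that arithmeticity then holds for the whole commensurability class. Thus every arrow admits an arithmetic representative.

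Second I would normalise the source. The underlying module of the (now arithmetic) source is a fractional ideal whose $R$-module isomorphism class is unchanged by dilation, hence a well-defined element of $Cl(R)$; since $\mathcal{ELL}(R)$ is a $Cl(R)$-torsor and $Cl(R)$ is finite with representatives $\Lambda_i$, there is a unique index $i$ and a dilation in $\K^*$ carrying the source module to exactly $\Lambda_i$. After this the target module $\Lambda$ may be any fractional ideal in $\jc$, every fractional ideal being commensurable as a lattice with $\Lambda_i$. With both modules fixed to $\Lambda_i$ and $\Lambda$, Theorem \ref{sesm} turns the morphism data $(\phi,\phi')$ into a point of $T(E_{\Lambda_i})\times T(E_{\Lambda})$, and by Definition \ref{groupoid} commensurability is exactly the condition that this point lie in the fibre product $\fc_{\Lambda_i,\Lambda}$. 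Letting $(\Lambda_i,\Lambda)$ range over $\ec$ then yields the claimed identification of the arrow set with $\bigcup_{(\Lambda_i,\Lambda)\in\ec}\fc_{\Lambda_i,\Lambda}$, sitting inside $\bigcup_{(\Lambda_i,\Lambda)\in\ec}T(E_{\Lambda_i})\times T(E_{\Lambda})$.

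The step I expect to be the main obstacle is the bijectivity of this normalisation. The dilation taking the source module to $\Lambda_i$ is unique only up to the stabiliser of $\Lambda_i$ in $\C^*$, which is the finite unit group $R^*$; a unit $u\in R^*$ fixes both fractional ideals set-wise while acting on $(\phi,\phi')$ by multiplication, i.e.\ on $\fc_{\Lambda_i,\Lambda}$ through the $T(E)$-description. The delicate point is therefore to check that this residual $R^*$-action is correctly accounted for, so that points of $\bigcup_{(\Lambda_i,\Lambda)\in\ec}\fc_{\Lambda_i,\Lambda}$ match arrows without over- or under-counting. The remaining verifications — that dilation preserves commensurability, that every commensurability class is reached, and that the fibre-product description is exhaustive — are routine consequences of Corollary \ref{invertiblelattices} together with Lemmas \ref{fact} and \ref{fact'}.
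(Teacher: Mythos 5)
Your reduction is, in substance, identical to the paper's own proof, which compresses it into two sentences: by Lemma \ref{commensurability} a commensurable pair can be simultaneously dilated so that the first module becomes the chosen representative $\Lambda_i$ and the second a fractional ideal, after which Corollary \ref{invertiblelattices} and Definition \ref{groupoid} are invoked. Your two normalisation steps (make the pair arithmetic, then pin the source to $\Lambda_i$) and the passage from $(\phi,\phi')$ to a point of $T(E_{\Lambda_i})\times T(E_{\Lambda})$ via Theorem \ref{sesm} spell out exactly that argument, so on this score your proposal matches the paper.

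The ``delicate point'' you isolate, however, is a genuine issue, and the paper's proof is silent about it. Once the source module is pinned to $\Lambda_i$, the residual dilations form the stabiliser $\{k\in\C^*\mid k\Lambda_i=\Lambda_i\}$, which equals $R^*$ (any such $k$ lies in $\K^*$ because $k\Lambda_i\subset\K$, and then $k$ and $k^{-1}$ both preserve the fractional ideal $\Lambda_i$). This group acts diagonally on $\fc_{\Lambda_i,\Lambda}$ by $(\phi,\phi')\mapsto(u\phi,u\phi')$, and since multiplication by the nonzero element $u-1\in R$ is injective on $T(E_{\Lambda_i})\simeq\hat{R}\simeq\prod_{\mathfrak{p}}R_{\mathfrak{p}}$ (a product of domains of characteristic zero), every $u\neq 1$ --- already $u=-1$ --- moves every point other than the origin. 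Consequently the natural surjection from $\bigcup_{(\Lambda_i,\Lambda)\in\ec}\fc_{\Lambda_i,\Lambda}$ onto the arrows of the groupoid collapses each diagonal $R^*$-orbit to a single arrow, and the asserted identification is a bijection only after dividing the right-hand side by this finite action --- precisely the unit ambiguity that Corollary \ref{Klattices} absorbs into the balanced product over $\hat{R}^*$, and which is lost here because $\fc_{\Lambda_i,\Lambda}$ carries no such balancing. So you should not treat this as an unresolved step in your own write-up, to be patched by a cleverer argument: it is a gap in the paper's statement and proof, and the correct fix is to replace the right-hand side by its quotient under the diagonal $R^*$-action, or to read ``identified with'' as ``parametrised by, up to the finite group $R^*$''.
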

\begin{proof}
If $\Lambda $ and $\Lambda'$ are commensurable,  Lemma \ref{commensurability} implies that there exists $k\in \mathbb{C}$ s.t.~$k\Lambda = \Lambda_i$ and $k\Lambda'\in Cl(R)$. Then we may conclude by means of Corollary \ref{invertiblelattices} and Definition \ref{groupoid}.
\end{proof}
We recall the following  (compare with \cite{Theta3}, Definition 4.21 and Lemma 4.22 and remind that we are working with fields of characteristic $0$):      

\begin{definition}\label{Qisogeny} Consider two elliptic curves $E$ and $E'$.
A $\Q$-\textit{isogeny} from  $E$ to $E'$ is a triple $(Z, f_1, f_2)$, where $Z$ is an elliptic curve and 
$f_1: Z\to E$, $f_2:Z \to E'$ are isogenies. Two $\Q$-isogenies $(Z, f_1, f_2)$, $(W, g_1, g_2)$ are \textit{equivalent} if there is an elliptic curve $C$ and isogenies
$a: C\to Z$, $b:C\to W$ so that $f_i\circ a=g_i\circ b$, $i=1$, $2$.  Any $\Q$-isogeny $\alpha =(Z, f_1, f_2)$ induces an isomorphism $V(\alpha):V(E) \to V(E')$ and equivalent $\Q$-isogenies induce the same map.

Equivalent classes of $\Q$-isogenies from $E$ to $E'$  for a fixed $E$ are in $1$-$1$ correspondence with open subgroups of $V(E)$ (the open subgroup 
$V(\alpha)^{-1}T(E')\subset V(E)$ corresponding to the $\Q$-isogeny $\alpha =(Z, f_1, f_2)$).  
\end{definition}

\begin{notations}\label{mapslattices}~
\begin{enumerate}\itemsep=3pt
\item Consider two fractional ideals $\Lambda$,  $\Lambda'$ of $R$, then there is an obvious
$\Q$-\textit{isogeny} $\alpha_{\Lambda, \Lambda'}:=(E_{\Lambda \cap \Lambda'}, p_1, p_2)$ from $E_{\Lambda}$ to $E_{\Lambda'}$, where $p_1$ and $p_2$ are the  natural     projections. Recalling Definition \ref{Qisogeny},   we have a natural
morphism 
$$V(\alpha_{\Lambda, \Lambda'})^{-1}: T(E_{\Lambda'})^*\rightarrow  V(E_{\Lambda}).$$
\item Gluing the morphisms $V(\alpha_{\Lambda, \Lambda'})^{-1}$ just defined  
we get a    natural map from the  set of arithmetic  1-dimensional $\K$  lattices modulo commensurability,  $\fc$
(compare with Corollary \ref{invertiblelattices}),   to $V(E_{\Lambda})$:
$$\rho_{\Lambda}:\fc = \bigcup_{\Lambda'}T(E_{\Lambda'})^*\rightarrow V(E_{\Lambda}), \hskip4mm \rho_{\Lambda}\mid_{T(E_{\Lambda'})^*}:=V(\alpha_{\Lambda, \Lambda'})^{-1}.$$
\item With notations as in Definition \ref{groupoid} and in Theorem \ref{groupoidthm} we can define a natural map
$$\xi:  \bigcup_{(\Lambda_i,\Lambda)\in\ec }T(E_{\Lambda_i})\times T(E_{\Lambda}) \longrightarrow  \bigcup_{\Lambda_i }  V(E_{\Lambda_i}),  
$$
acting on each $T(E_{\Lambda_i})\times T(E_{\Lambda}) $ as
$$\xi: T(E_{\Lambda_i})\times T(E_{\Lambda}) \rightarrow V(E_{\Lambda_i}), \hskip3mm (x,y)\rightarrow   V(\alpha_{\Lambda_i, \Lambda})^{-1}(y)\cdot x.
$$
By Theorem \ref{groupoidthm}, such a map restricts to a map from  the groupoid of commensurability modulo dilations 
$$\xi: \bigcup_{(\Lambda_i,\Lambda)\in\ec  }\fc _{\Lambda_i, \Lambda}\longrightarrow  \bigcup_{\Lambda_i }  V(E_{\Lambda_i}), \hskip3mm
\xi\mid_{\fc _{\Lambda_i, \Lambda}}: (x,y)\rightarrow   V(\alpha_{\Lambda_i, \Lambda})^{-1}(y)\cdot x. $$
\end{enumerate}
\end{notations}

\section{The Adelic Heisenberg group.}\label{AHGsec}

\begin{notations}
\label{notationsAHG}~
\begin{enumerate}\itemsep=3pt
\item
Let $L\in Pic(E)$  be  a line bundle on an elliptic curve $E$ with projection map
 $p_L:  L\longrightarrow E$. We  assume the curve $E$   to be  uniformized as
$$\pi_{\Lambda}: \C  \longrightarrow  \frac{\mathbb C }{\Lambda}\simeq E.$$ 
By \cite{CAV}, Proposition 2.1.6 and Lemma 2.1.7,  the first Chern class  of $L$, $c_1(L)$,   can be identified with an alternating form 
$$\C\times \C \longrightarrow \mathbb{R}, \hskip2mm (z,w)\longrightarrow r\Im (z\overline{w}), $$
s.t. $r\Im (\Lambda\overline{\Lambda}) \subset \mathbb{Z}$.
\par\noindent
If we fix  an integral basis of the lattice $\Lambda $: $\lambda := l_x+il_y$, $\mu :=m_x+im_y$ s.t. $\Im (\frac{\lambda}{\mu})>0$
 then, in order to have 
$r\Im (\Lambda\overline{\Lambda}) \subset \mathbb{Z}$,  we must have   
\begin{equation}
\label{r}
r=\frac{d}{l_x m_y- l_y m_x}
\end{equation}
where $d$ is the \textit{degree}  of    $L$ (compare with \cite{Hus}, \S 12.2).
\item 
As in \cite{CAV}, \S 2.4,   we denote by $t_x$ the \textit{translation by}    $x\in E$ and     define:
$$
K(L):=\{ x\in E \hskip1mm \mid \hskip1mm t_x^*L \simeq L \}, \hskip2mm \Lambda(L):=\pi_{\Lambda}^{-1}(K(L)).
$$
By \cite{CAV}, \S 2.4,
\begin{equation}
\label{Lambda}
\Lambda(L)=\{ z\in \mathbb{C} \hskip1mm \mid \hskip1mm r\Im(z\overline{\Lambda}) \subset \mathbb{Z} \}\simeq \frac{1}{d}\Lambda, 
\end{equation} 
hence we have:   
\begin{equation}
\label{K}
 K(L)\simeq\frac{1/d\Lambda}{\Lambda}\simeq  E[d].  
\end{equation}
\end{enumerate}  
\end{notations}

\begin{remark}
\label{a2}
Assume that  $End(E)\simeq R$ and fix $a\in R$. Then 
$E\stackrel{a}{\longrightarrow} E$ has degree $\vert a \vert ^2$ (\cite{Sil0} \S 5, Proposition 2.3, \cite{Hus} \S 12, Proposition 1.3) 
hence \eqref{K} implies: 
\begin{equation}
\label{KaL}
K(a^*L)=E[\vert a \vert ^2 d]
\end{equation}
\end{remark}

\begin{definition}
\label{Heisenberg}
Let $x\in E$, then a biholomorphic map $\phi: L \longrightarrow L$ is called an \textit{automorphism of} $L$ \textit{over} $x$, if the diagram
\begin{center}
 \begin{tikzpicture}[description/.style={fill=white,inner sep=4pt},normal line/.style={->,font=\footnotesize,shorten >=4pt,shorten <=4pt}]
 \matrix (m) [matrix of math nodes, row sep=3em, column sep=3.5em, text height=1.5ex, text depth=0.25ex]
 { L & L \\ Y & X \\ };
 \path[normal line] (m-1-1) edge node[left] {$p_L$} (m-2-1);
 \path[normal line] (m-1-2) edge node[right] {$p_L$} (m-2-2);
 \path[normal line] (m-1-1) edge node[above] {$\phi$} (m-1-2);
 \path[normal line] (m-2-1) edge node[above] {$t_x$} (m-2-2);
 \end{tikzpicture}
 \end{center}
\noindent
commutes (\cite{CAV}, 6.1). This of course forces $x$ to belong to $K(L)$. We denote by  $\mathcal{G}(L)_x$ the set of automorphisms of $L$ over $x$ and set 
$$\mathcal{G}(L):= \bigcup_{x\in K(L)}\mathcal{G}(L)_x.$$
Then $\mathcal{G}(L)$ is a group via composition of automorphisms, the \textit{Heisenberg group} of $L$, which is known to be a central extension of $K(L)$:
$$
0\longrightarrow \mathbb{C}^* \longrightarrow \mathcal{G}(L) \stackrel{g_L}{\longrightarrow} K(L)\longrightarrow 0  
$$
(compare with \cite{Theta3}, \S 1 and with  \cite{CAV}, \S 6). As for  any central extension of an abelian group we can define   
 a \textit{commutator map}:
$$
e^L: K(L)\times K(L)\longrightarrow \mathbb{C}^*
$$
$$
e^L(x,y):=\tilde{x}\tilde{y}\tilde{x}^{-1}\tilde{y}^{-1},   
$$
where $g_L(\tilde{x})=x$ and $g_L(\tilde{y})=y$.
If $z$, $w\in \mathbb{C}$ are chosen in such a way that $\pi_{\Lambda}(z)=x$ and $\pi_{\Lambda}(w)=y$, then (\cite{CAV}, Proposition 6.3.1)   
we have
\begin{equation}\label{e}
e^L(x,y)=\exp\{-2\pi ir\Im(z\overline{w}) \}\in \boldsymbol{\mu},
\end{equation}
which obviously takes values in the \textit{group of  roots of unity} $\boldsymbol{\mu}$.
\end{definition} 

\begin{remark}\label{Weilp}
Combining \eqref{r}, \eqref{K} and  \eqref{e} one can recognize in $e^L$ the inverse of the \textit{Weil Pairing} on $E[d]$ (\cite{Hus}, \S 12):
$$e^L(x,y)=W(x,y)^{-1}$$  
\end{remark}

From now on we assume that $\Lambda$ is a fractional ideal of $\K$ (so $End(E)\simeq R$).

\begin{lemma}
\label{lemmaequivalence}
Assume that $ay=x \in E$, then we have:
\begin{enumerate}\itemsep=3pt
\item if $x\in K(L)$ then $y\in K(a^*L)$;
\item if $y\in K(a^*L)$ then
$$x\in K(L) \Leftrightarrow e^{a^*L}\mid_{A_x}\equiv 1, $$
where $A_x:=a^{-1}(x)\times  a^{-1}(x)$.
\end{enumerate}
\end{lemma}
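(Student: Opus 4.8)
The plan is to reduce both parts to two explicit descriptions already at hand: the identification \eqref{Lambda} of $\Lambda(L)=\pi_\Lambda^{-1}(K(L))$ with $\{z\in\C \mid r\Im(z\overline\Lambda)\subset\mathbb{Z}\}$, and the closed formula \eqref{e} for the commutator $e^L$. The only additional input needed is the behaviour of these data under pullback by $a$: since $\End(E)\simeq R$ acts on the uniformizing $\C$ by multiplication by the complex number $a$, the alternating form representing $c_1(a^*L)$ is $(z,w)\mapsto r\Im((az)\overline{(aw)})=r|a|^2\Im(z\overline w)$. Thus $a^*L$ has the same shape as $L$ with $r$ replaced by $r|a|^2$ (in agreement with Remark \ref{a2} and \eqref{KaL}); in particular $z\in\Lambda(a^*L)$ iff $r|a|^2\Im(z\overline\Lambda)\subset\mathbb{Z}$, and by \eqref{e} one has $e^{a^*L}(x',y')=\exp\{-2\pi i\,r|a|^2\Im(z'\overline{w'})\}$ whenever $\pi_\Lambda(z')=x'$ and $\pi_\Lambda(w')=y'$.

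For part (1) I would work on the uniformization. Pick $w\in\C$ with $\pi_\Lambda(w)=y$; then $\pi_\Lambda(aw)=ay=x$, so $aw$ is a lift of $x$, and $x\in K(L)$ reads $aw\in\Lambda(L)$, i.e.\ $r\Im((aw)\overline\Lambda)\subset\mathbb{Z}$. Using the identity $|a|^2 w\overline\lambda=(aw)\overline{(a\lambda)}$ together with $a\lambda\in\Lambda$ (which holds because $\Lambda$ is a fractional ideal and $a\in R$), I get, for every $\lambda\in\Lambda$, that $r|a|^2\Im(w\overline\lambda)=r\Im((aw)\overline{(a\lambda)})\in\mathbb{Z}$. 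Hence $w\in\Lambda(a^*L)$, that is $y\in K(a^*L)$.

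For part (2), note first that $a^{-1}(x)=y+E[a]$, and that $E[a]\subset K(a^*L)=E[|a|^2d]$ by \eqref{KaL}, so $A_x\subset K(a^*L)\times K(a^*L)$ and the restriction $e^{a^*L}|_{A_x}$ is meaningful. Fix the lift $z:=aw$ of $x$; for $p\in a^{-1}(x)$ choose $z_p\in\C$ over $p$ and put $\ell_p:=az_p-z\in\Lambda$. The key computation is to expand $(az_p)\overline{(az_q)}=(z+\ell_p)(\overline z+\overline{\ell_q})$, take the imaginary part, and drop the term $r\Im(\ell_p\overline{\ell_q})\in\mathbb{Z}$; this gives
$$e^{a^*L}(p,q)=\chi(\ell_q)\,\chi(\ell_p)^{-1},\qquad \chi(\ell):=\exp\{-2\pi i\,r\Im(z\overline\ell)\}.$$
The hypothesis $y\in K(a^*L)$ is precisely what makes $\chi$ descend to $\Lambda/a\Lambda$, since replacing $\ell$ by $\ell+a\lambda$ multiplies $\chi$ by $\exp\{-2\pi i\,r|a|^2\Im(w\overline\lambda)\}=1$; moreover, as $p$ runs over $a^{-1}(x)$ the classes $\ell_p$ run over all of $\Lambda/a\Lambda$. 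Because $\chi(0)=1$, the form $e^{a^*L}$ vanishes identically on $A_x$ iff $\chi$ is constant iff $\chi\equiv 1$ on $\Lambda$, which by \eqref{Lambda} is exactly $x\in K(L)$.

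The two imaginary-part expansions are routine; the delicate point is the bookkeeping of representatives modulo $\Lambda$ and modulo $a\Lambda$. Specifically, that $\chi(\ell_p)$ depends only on $p$ and that $\chi$ factors through $\Lambda/a\Lambda$ both rest on $y\in K(a^*L)$ — which is why this hypothesis is assumed in part (2) rather than proved — and the equivalence ``$e^{a^*L}|_{A_x}\equiv 1$'' $\Leftrightarrow$ ``$\chi\equiv 1$ on $\Lambda$'' uses that $p\mapsto\ell_p\bmod a\Lambda$ is a bijection onto $\Lambda/a\Lambda$ (which incidentally recovers $\#a^{-1}(x)=|a|^2=\#(\Lambda/a\Lambda)$). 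I expect this descent step to be the only genuinely delicate part of the argument.
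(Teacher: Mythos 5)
Your proof is correct and takes essentially the same route as the paper's: both parts are reduced to the explicit description \eqref{Lambda} together with the commutator formula \eqref{e}, and the heart of (2) is the same expansion of $r\Im$ over lifts of the points of $a^{-1}(x)$ indexed by representatives of $\Lambda/a\Lambda$, discarding the integral term coming from $r\Im(\Lambda\overline{\Lambda})\subset\mathbb{Z}$. The differences are only bookkeeping: you keep the single uniformization $\pi_{\Lambda}$ and rescale the form to $r|a|^2\Im(z\overline{w})$, while the paper re-uniformizes the source of $a$ by the lattice $a\Lambda$ and keeps the form $r\Im(z\overline{w})$ (the two pictures match under the substitution $z=aw$); likewise your character $\chi$ merely packages the paper's two separate implications into a single equivalence, and your direct lattice computation for (1) replaces the paper's one-line torsion inclusion $K(L)=E[d]\subset E[|a|^2d]=K(a^*L)$.
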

\begin{proof}
(1) Combining \eqref{K} with Remark \ref{a2} we find
$$
K(L)=E[d]\subset E[\vert a\vert^2d]=K(a^*L).$$
\par\noindent  
(2) We have a commutative diagram:
 \begin{center}
 \begin{tikzpicture}[description/.style={fill=white,inner sep=4pt},normal line/.style={->,font=\footnotesize,shorten >=4pt,shorten <=4pt}]
 \matrix (m) [matrix of math nodes, row sep=3em, column sep=3.5em, text height=1.5ex, text depth=0.25ex]
 { \mathbb{C} & \mathbb{C} \\ E & E \\ };
 \path[normal line] (m-1-1) edge node[left] {$\pi_{\Lambda}$} (m-2-1);
 \path[normal line] (m-1-2) edge node[right] {$\pi_{a\Lambda}$} (m-2-2);
 \path[normal line] (m-1-1) edge node[above] {id} (m-1-2);
 \path[normal line] (m-2-1) edge node[above] {a} (m-2-2);
 \end{tikzpicture}
 \end{center}
showing that (recall \eqref{Lambda})  
\begin{equation}\label{aLambda}
\Lambda(L)=\{ z\in \mathbb{C} \hskip1mm \mid \hskip1mm r\Im(z\overline{\Lambda})\subset \mathbb{Z} \}\subset
\{ z\in \mathbb{C} \hskip1mm \mid \hskip1mm r\Im(z\overline{a\Lambda}) \subset \mathbb{Z}\} = \Lambda (a^*L).
\end{equation} 
Choose a set of representatives of $\frac{\Lambda}{a\Lambda}$: $v_i\in \Lambda $, $1\leq i \leq \vert a\vert^2$.
\par\noindent
If $z\in \Lambda (a^*L)$ then \eqref{aLambda} shows that
\begin{equation}\label{v_i}
z\in \Lambda(L) \hskip1mm \Leftrightarrow \hskip1mm r\Im(z\overline{v_i}) \in  \mathbb{Z}, \hskip1mm  
1\leq i \leq \vert a\vert^2.
\end{equation}
If $z\in \C$ projects on  both $x$ and $y$, and if  $e^{a^*L}\mid_{A_x}\equiv 1 $ then \eqref{e} implies 
$$r\Im(z\overline{v_i})  = r\Im(z(\overline{z+v_i}))\in  \mathbb{Z}, \hskip1mm 1\leq i \leq \vert a\vert^2.$$
Conversely, if  $r\Im(z\overline{v_i})\in  \mathbb{Z}$, $1\leq i \leq \vert a\vert^2$, then we have
$$r\Im((z+v_i)(\overline{z+v_j})) \in  \mathbb{Z}, \hskip1mm 1\leq i,  j \leq \vert a\vert^2$$
hence $e^{a^*L}\mid_{A_x}\equiv 1 $.  
\end{proof}

\par\noindent
Consider now the pull-back commutative diagram
\begin{center}
 \begin{tikzpicture}[description/.style={fill=white,inner sep=4pt},normal line/.style={->,font=\footnotesize,shorten >=4pt,shorten <=4pt}]
 \matrix (m) [matrix of math nodes, row sep=3em, column sep=3.5em, text height=1.5ex, text depth=0.25ex]
 { a^*L & L \\ E & E \\ };
 \path[normal line] (m-1-1) edge node[left] {$q$} (m-2-1);
 \path[normal line] (m-1-2) edge node[right] {$p$} (m-2-2);
 \path[normal line] (m-1-1) edge node[above] {$\tilde{a}$} (m-1-2);
 \path[normal line] (m-2-1) edge node[above] {$a$} (m-2-2);
 \end{tikzpicture}
 \end{center}
 where we put $q:=p_{a^*L}$ and $p:=p_L$ in order to ease the notations.  
The following Lemma relates the automorphisms of $L$ over a point $x$ to the automorphisms of $a^*L$ over any point $y$, s.t. $ay=x$.

\begin{lemma}
\label{covering}
Assume that $ay=x \in E$ and fix $\phi \in \gc(L)_x$. Then there exists a unique $\psi \in \gc(a^*L)_y$ s.t. $\tilde{a}\circ \psi=\phi \circ \tilde{a}$.
\end{lemma}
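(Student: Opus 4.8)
The plan is to construct $\psi$ via the universal property of the fibre product. By definition $a^*L$ is the pullback $E\times_E L$ of $a:E\to E$ along $p=p_L:L\to E$, with $q=p_{a^*L}$ and $\tilde a$ the two projections. Thus, to give a morphism $\psi:a^*L\to a^*L$ it is enough to give a morphism $a^*L\to E$ together with a morphism $a^*L\to L$ whose compositions with $a$ and with $p$ respectively agree. I would take these to be $t_y\circ q$ and $\phi\circ\tilde a$: the first will force $\psi$ to lie over $y$, while the second is exactly the sought relation $\tilde a\circ\psi=\phi\circ\tilde a$.

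The key step is to verify the compatibility condition $a\circ(t_y\circ q)=p\circ(\phi\circ\tilde a)$. This combines three ingredients: the map $a:E\to E$ is a group homomorphism, so that $ay=x$ gives $a\circ t_y=t_x\circ a$; the hypothesis $\phi\in\gc(L)_x$ gives $p\circ\phi=t_x\circ p$; and commutativity of the pullback square gives $a\circ q=p\circ\tilde a$. Chaining these,
$$a\circ t_y\circ q=t_x\circ a\circ q=t_x\circ p\circ\tilde a=p\circ\phi\circ\tilde a,$$
so the universal property produces a unique $\psi$ with $q\circ\psi=t_y\circ q$ and $\tilde a\circ\psi=\phi\circ\tilde a$.

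It remains to see that $\psi\in\gc(a^*L)_y$ and that it is unique with the stated property. The relation $q\circ\psi=t_y\circ q$ places $\psi$ over $y$ (and $y\in K(a^*L)$ by Lemma \ref{lemmaequivalence}(1)), while fibrewise linearity, hence the fact that $\psi$ is a genuine automorphism of the bundle, is inherited from $\phi$ through the fibrewise isomorphism $\tilde a$; an explicit inverse can be obtained by repeating the construction with $\phi^{-1}\in\gc(L)_{-x}$ over the point $-y$ (legitimate since $a(-y)=-x$) and invoking uniqueness, which forces the two composites to be the identity. Conversely, any element of $\gc(a^*L)_y$ already satisfies $q\circ\psi=t_y\circ q$ by definition, so imposing $\tilde a\circ\psi=\phi\circ\tilde a$ pins it down uniquely by the universal property again. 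The only genuine computation is the compatibility identity above; I expect no real obstacle beyond carefully tracking the translations, the crucial point being that $a$, as an endomorphism, turns $t_y$ into $t_x$.
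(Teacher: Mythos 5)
Your proposal is correct and follows essentially the same route as the paper: the same compatibility computation $a\circ t_y\circ q=t_x\circ a\circ q=t_x\circ p\circ\tilde a=p\circ\phi\circ\tilde a$, followed by the universal property of the pull-back to produce the unique $\psi$ with $q\circ\psi=t_y\circ q$ and $\tilde a\circ\psi=\phi\circ\tilde a$. Your extra checks (invertibility of $\psi$ via $\phi^{-1}$, and the uniqueness discussion) are details the paper leaves implicit, but they do not change the argument.
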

\begin{proof}
It is an easy application of the universal property of pull-back diagrams.\par\noindent  
Since $ay=x$, we also have $t_x\circ a=a\circ t_y$, and
$$a\circ t_y \circ q=t_x \circ a \circ q=t_x \circ p\circ \tilde{a}=p\circ \phi \circ \tilde{a}
$$
hence
$f:=t_y \circ q: a^*L\longrightarrow E$ and $g:=\phi \circ \tilde{a}: a^*L\longrightarrow L$ satisfy
$a\circ f=p \circ g$.
By the universal property of pull-back diagrams there exists a unique $\psi: a^*L \longrightarrow a^*L$ such that
$f=q\circ \psi$ and $g=\tilde{a} \circ \psi$, which amounts to say   $\psi \in \gc(a^*L)_y$ and 
 $\tilde{a}\circ \psi =\phi \circ \tilde{a}$. 
\end{proof}

\begin{definition}
\label{lifting}
Following \cite{Theta3}, Definition 4.5 we will say that the  automorphism 
$\psi\in  \gc(a^*L)_y $ defined in Lemma \ref{covering}
\textit{covers} $\phi $  \textit{over}  $t_y $. We will also say that
$\phi $ 
\textit{is covered by} $\psi $  \textit{over}  $t_y $.
\end{definition}

\begin{theorem}
\label{isomorphismThm} Assume that $ay=x \in E$, and that $y\in K(a^*L)$, TFAE:
\begin{enumerate}\itemsep=3pt
\item  $e^{a^*L}\mid_{A_x}\equiv 1$;
\item $x\in K(L)$;
\item for any $\psi\in  \gc(a^*L)_y $ there exists $\phi \in \gc(L)_x  $ s.t. $\tilde{a}\circ \psi =\phi \circ \tilde{a}$;
\item there are $\psi\in  \gc(a^*L)_y $ and  $\phi \in \gc(L)_x$ s.t. $\tilde{a}\circ \psi =\phi \circ \tilde{a}$.
\end{enumerate}
\end{theorem}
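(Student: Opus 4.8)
The plan is to prove the four equivalences by arranging them into the cycle $(1)\Leftrightarrow(2)$ together with $(2)\Rightarrow(3)\Rightarrow(4)\Rightarrow(2)$, so that each individual implication is either one of the two preceding lemmas or a soft formal argument. First I would dispose of $(1)\Leftrightarrow(2)$: since the standing hypothesis already includes $y\in K(a^*L)$, this equivalence is literally the content of Lemma \ref{lemmaequivalence}(2), which states $x\in K(L)\Leftrightarrow e^{a^*L}\mid_{A_x}\equiv 1$ under exactly this assumption. No further computation is needed, and all attention can then be directed to relating the membership condition $x\in K(L)$ to the two existence statements $(3)$ and $(4)$ about covering automorphisms.

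The substantive step is $(2)\Rightarrow(3)$. Assuming $x\in K(L)$, the fibers $\gc(L)_x=g_L^{-1}(x)$ and $\gc(a^*L)_y=g_{a^*L}^{-1}(y)$ are both nonempty (the latter because $y\in K(a^*L)$), hence each is a torsor under the central $\mathbb{C}^*$ appearing in its Heisenberg extension. Lemma \ref{covering} assigns to every $\phi\in\gc(L)_x$ a unique $\psi\in\gc(a^*L)_y$ with $\tilde a\circ\psi=\phi\circ\tilde a$, i.e.\ a well-defined map $\gc(L)_x\to\gc(a^*L)_y$. The key point I would establish is that this map is $\mathbb{C}^*$-equivariant: replacing $\phi$ by $\lambda\phi$ for $\lambda\in\mathbb{C}^*$ gives $(\lambda\phi)\circ\tilde a=\lambda\,(\phi\circ\tilde a)=\lambda\,(\tilde a\circ\psi)=\tilde a\circ(\lambda\psi)$, where the last equality uses that $\tilde a$ is fiberwise linear and therefore commutes with scalar multiplication; uniqueness in Lemma \ref{covering} then forces the covering of $\lambda\phi$ to be $\lambda\psi$. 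A $\mathbb{C}^*$-equivariant map between two $\mathbb{C}^*$-torsors is automatically a bijection, and in particular surjective, which is precisely the statement that every $\psi\in\gc(a^*L)_y$ is covered by some $\phi\in\gc(L)_x$. This yields $(3)$.

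The two remaining implications are formal. For $(3)\Rightarrow(4)$ I would simply invoke $y\in K(a^*L)$ to guarantee $\gc(a^*L)_y\neq\emptyset$, pick any $\psi$ there, and take the $\phi$ furnished by $(3)$; the resulting pair is exactly what $(4)$ asks for. For $(4)\Rightarrow(2)$ I would observe that the mere existence of an element $\phi\in\gc(L)_x$ forces $t_x^*L\simeq L$, i.e.\ $x\in K(L)$, directly from the definition of $\gc(L)_x$ and of $K(L)$. Together with $(1)\Leftrightarrow(2)$ this closes the cycle and establishes that all four statements are equivalent.

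The only step demanding genuine care — the main obstacle — is the equivariance argument inside $(2)\Rightarrow(3)$. Lemma \ref{covering} by itself only produces coverings in one direction (from $\phi$ to $\psi$), so the upgrade to the universally quantified statement $(3)$ rests entirely on recognizing that the covering construction intertwines the two central $\mathbb{C}^*$-actions; it is this compatibility, rather than any further geometric input, that turns surjectivity of the torsor map into the desired existence of coverings for \emph{every} $\psi\in\gc(a^*L)_y$.
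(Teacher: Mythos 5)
Your proposal is correct, and its skeleton is exactly the paper's: the equivalence $(1)\Leftrightarrow(2)$ is quoted from Lemma \ref{lemmaequivalence}(2), the implication $(2)\Rightarrow(3)$ rests on Lemma \ref{covering}, $(3)\Rightarrow(4)$ is formal, and $(4)\Rightarrow(2)$ follows because the existence of any $\phi\in\gc(L)_x$ forces $t_x^*L\simeq L$. The one place you diverge is the only place where divergence matters. The paper disposes of $(2)\Rightarrow(3)$ by simply citing Lemma \ref{covering}; but that lemma, read literally, assigns to each $\phi\in\gc(L)_x$ a unique covering $\psi\in\gc(a^*L)_y$, i.e.\ it produces a map $\gc(L)_x\to\gc(a^*L)_y$, whereas statement $(3)$ quantifies over $\psi$ and demands a $\phi$ — the opposite direction. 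Your argument supplies exactly the missing surjectivity: fiberwise linearity of $\tilde a$ together with the uniqueness clause of Lemma \ref{covering} shows the covering map intertwines the central $\mathbb{C}^*$-actions, and a $\mathbb{C}^*$-equivariant map between $\mathbb{C}^*$-torsors is automatically bijective. In the paper this bijectivity is only asserted afterwards, in Remark \ref{isomorphism}(1), and without proof; so your proposal does not merely reproduce the published argument but closes a small, fixable gap in it, at the cost of a few extra lines. Your observation that $(3)\Rightarrow(4)$ needs $\gc(a^*L)_y\neq\emptyset$ (guaranteed by the standing hypothesis $y\in K(a^*L)$, since $(3)$ alone is vacuously true when the fiber is empty) is likewise a point the paper passes over with ``obvious,'' and is worth keeping.
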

\begin{proof}
(1) $\Leftrightarrow$ (2) follows by Lemma \ref{lemmaequivalence}.
\par\noindent
(2) $\Rightarrow$ (3) follows by Lemma \ref{covering}.
\par\noindent
(3) $\Rightarrow$ (4) is obvious.
\par\noindent
(4) $\Rightarrow$ (2) follows because if $\phi \in \gc(L)_x$ then $x\in K(L)$.
\end{proof}

\begin{remark}
\label{isomorphism}~
\begin{enumerate}\itemsep=3pt
\item
Under the hypotheses of either
Lemma \ref{covering} or Theorem \ref{isomorphismThm} we have   a bijective map 
$$  \gc(L)_x \leftrightarrow \gc(a^*L)_y
$$ 
\par\noindent 
defined by the relation
$$ 
\phi \leftrightarrow \psi \hskip2mm \Leftrightarrow \hskip2mm \tilde{a}\circ \psi =\phi \circ \tilde{a}.
$$
\item It is immediate to check that the bijective maps just defined preserve the composition of automorphisms,  so they  glue to give  a group morphism
$$\delta_{a,L}:\gc(a^*L)\mid_{a^{-1}(K(L))}\longrightarrow \gc(L).$$
\end{enumerate}
\end{remark}

\begin{definition} Let $x=(x_a)_{a\in R}\in V(E)$ so that $x_1\in E_{tor}$.
Set 
$$I_x:=\{a\in R \mid \hskip1mm a x_1\in K(L) \}$$ 
and 
$$J_x:=\{a\in R \mid \hskip1mm  x_a\in K(a^*L) \}.$$  
\end{definition}

\begin{lemma}\label{IeJ} Let $x=(x_a)_{a\in R}\in V(E)$ so that $x_1\in E_{tor}$ and choose $k\in \K$ s.t. $\pi_{\Lambda}(k)=x_1$.
Then we have $$I_x=\overline{J_x}\simeq \frac{1}{kd}\Lambda \cap R.$$
\end{lemma}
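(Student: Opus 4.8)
The plan is to compute the two sets $I_x$ and $J_x$ separately in the uniformization $\pi_{\Lambda}:\mathbb{C}\to E$, reduce each membership condition to a divisibility condition on $k$, and then observe that the two resulting conditions differ exactly by a complex conjugation.

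First I would dispose of $I_x$. Since $x_1=\pi_{\Lambda}(k)$ and $K(L)=E[d]$ by \eqref{K}, the point $a x_1=\pi_{\Lambda}(ak)$ lies in $K(L)$ if and only if $d\,a\,x_1=0$ in $E$, i.e.\ $dak\in\Lambda$, i.e.\ $a\in\tfrac{1}{kd}\Lambda$. Letting $a$ range over $R$ gives $I_x=\tfrac{1}{kd}\Lambda\cap R$, a fractional ideal intersected with $R$, which is the integral ideal appearing in the statement.

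The computation of $J_x$ is the heart of the matter, and its main subtlety is that $x_a$ is characterized only by the division relation $a\,x_a=x_1$ built into the definition of $V(E)$. It is therefore tempting but awkward to pick a lift $w_a\in\mathbb{C}$ with $\pi_{\Lambda}(w_a)=x_a$ and test $w_a\in\Lambda(a^*L)$, since such a lift is ambiguous modulo $\Lambda$. I would sidestep this entirely by working with the torsion condition directly: by Remark \ref{a2} one has $K(a^*L)=E[\,|a|^2 d\,]$, so $a\in J_x$ if and only if $|a|^2 d\,x_a=0$ in $E$. The key step is then to use $|a|^2=a\bar a$ together with $a\,x_a=x_1$ to rewrite
$$|a|^2 d\,x_a = a\bar a\, d\,x_a = \bar a\, d\,(a\,x_a) = \bar a\, d\,x_1,$$
an expression that no longer mentions $x_a$ or any lift. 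Hence $a\in J_x$ if and only if $\bar a\, d\,x_1=0$, i.e.\ $\bar a\, dk\in\Lambda$, i.e.\ $\bar a\in\tfrac{1}{kd}\Lambda$; equivalently, $a\in J_x$ precisely when $\bar a\in I_x$.

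Finally, since conjugation is an involution of $R$ that preserves membership in a fractional ideal, I would apply it to this last equivalence to obtain $\overline{J_x}=\{\bar a:\bar a\in I_x\}=I_x=\tfrac{1}{kd}\Lambda\cap R$, which is the assertion. The only point needing genuine care is the identification $K(a^*L)=E[\,|a|^2 d\,]$ combined with the rewriting $|a|^2 d\,x_a=\bar a\, d\,x_1$: this is exactly where the norm factor $|a|^2=a\bar a$ splits off a conjugate, and it is the source of the conjugation distinguishing $J_x$ from $I_x$.
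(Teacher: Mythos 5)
Your argument is correct, and although it shares the paper's skeleton (compute $I_x$ and $J_x$ separately via \eqref{K} and \eqref{KaL}, and let the conjugate $\bar{a}$ split off the norm $\vert a\vert^2=a\bar{a}$), your handling of $J_x$ differs from the paper's in a way worth recording. The paper chooses, for each $a$, an element $k_a\in\K$ satisfying \emph{both} $ak_a=k$ exactly in $\K$ and $\pi_{\Lambda}(k_a)=x_a$, and then tests $k_a\in\frac{1}{a\bar{a}d}\Lambda$ via \eqref{Lambda}. Strictly speaking such a $k_a$ need not exist: the relation $ax_a=x_1$ only gives $ak_a\equiv k \pmod{\Lambda}$ for an arbitrary lift $k_a$ of $x_a$, and a lift with $ak_a=k$ on the nose exists only for the one solution $x_a=\pi_{\Lambda}(k/a)$ of $ay=x_1$ among the $\vert a\vert^2$ possible ones (e.g.\ for $\Lambda=\mathbb{Z}[i]$, $k=0$, $a=2$, $x_a=\pi_{\Lambda}(1/2)$ no such lift exists). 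The paper's conclusion survives because the error term lies in $\Lambda\subset\frac{1}{\bar{a}d}\Lambda$, so the condition on $k$ is unaffected --- but this needed repair is exactly the lift ambiguity you flagged and sidestepped. Your version replaces lifts by the torsion identity $\vert a\vert^2 d\,x_a=\bar{a}d(ax_a)=\bar{a}d\,x_1$ inside $E$, which uses only the relation $ax_a=x_1$ built into $V(E)$ and is therefore unambiguous; that robustness is what your route buys, at no cost in length, while the paper's route stays closer to the explicit uniformization data \eqref{Lambda} used throughout Section \ref{AHGsec}. One small wrinkle at the end: your clause ``conjugation preserves membership in a fractional ideal'' is false in general (conjugation carries $\Lambda$ to $\overline{\Lambda}$), but you never actually use it; the equality $\overline{J_x}=I_x$ already follows from the equivalence $a\in J_x\Leftrightarrow\bar{a}\in I_x$ together with the facts that conjugation is an involution of $R$ and $I_x\subset R$.
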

\begin{proof}
$I_x\simeq \frac{1}{kd}\Lambda \cap R:$
by \eqref{Lambda}, for any $a\in R$ we have:
$$ax_1\in K(L)\hskip1mm \Leftrightarrow \hskip1mm ak\in \frac{1}{d}\Lambda, \hskip1mm \Leftrightarrow \hskip1mm a\in \frac{1}{kd}\Lambda \cap R.$$
$\overline{J_x} \simeq \frac{1}{kd}\Lambda \cap R:$
Choose $k_a\in \K$ s.t. $ak_a=k$ and $\pi_{\Lambda}(k_a)=x_a$. Combining \eqref{Lambda} and \eqref{K} with Remark \ref{a2}, for any $a\in R$ we have:
$$x_a\in K(a^*L) \hskip1mm \Leftrightarrow \hskip1mm k_a\in \frac{1}{a\overline{a}d}\Lambda, \hskip1mm \Leftrightarrow \hskip1mm k=ak_a\in \frac{1}{\overline{a}d}\Lambda, \hskip1mm \Leftrightarrow \hskip1mm \overline{a}\in \frac{1}{kd}\Lambda \cap R. $$
\end{proof}

\begin{remark}
If $x\in V(E)$ and  $a, b\in J_x$ then by Remark \ref{isomorphism} we have  canonical isomorphisms:
$$
\mathcal{G}(a^*L)_{x_a}\simeq \mathcal{G}((ab)^*L)_{x_{ab}} \simeq \mathcal{G}(b^*L)_{x_b}
$$
in the sense that any $\phi_a\in \mathcal{G}(a^*L)_{x_a}$ ($\phi_b\in \mathcal{G}(b^*L)_{x_b}$)  is covered by a unique element  $\phi_{ab}\in\mathcal{G}((ab)^*L)_{x_{ab}}$ over $t_{x_{ab}}$:
$$\tilde{b}\circ \phi_{ab} =\phi_{a} \circ \tilde{b}, \hskip2mm \tilde{a}\circ \phi_{ab} =\phi_{b} \circ \tilde{a}.
$$
\end{remark}

By the previous remark we can give the following

\begin{definition}\label{AHG}~
\begin{enumerate}\itemsep=3pt
\item We denote by $\hat{\mathcal{G}}(L)$ the \textit{adelic Heisenberg group} i.e.~the group   of
$\alpha =(x,  (\phi_a)_{a\in J_{x}})$ s.t.~$x\in V(E)$, $\phi_a \in \mathcal{G}(a^*L)_{x_a}$ with $\phi_{ab}$ covering $\phi_a$ over $t_{x_{ab}}$,  $\forall a\in J_{x}$.
Any  collection of elements $(\phi_a)_{a\in J_{x}} $, $\phi_a \in \mathcal{G}(a^*L)_{x_a}$ with $\phi_{ab}$ covering $\phi_a$ over $t_{x_{ab}}$  will be called a \textit{coherent system of automorphisms} defined over $x$.
\item Using the (rather cumbersome) notations of Remark \ref{isomorphism} (2), if $(\phi_a)_{a\in J_{x}} $ is a coherent system of automorphisms then
$$\delta_{b,a^*L}(\phi_{ab})=\phi_a, \hskip2mm \forall a\in J_x, \hskip1mm \forall b\in R. $$
\item Consider the closed subgroup $\vc_a:= \nu_a^{-1}(K(a^*L))\subset V(E)$. Obviously $a\in J_x$ $\forall x\in \vc_a$ so it is well defined a group morphism  
$$\gamma_a: \hat{\mathcal{G}}(L)\mid_{\vc_a}\longrightarrow \gc(a^*L), \hskip4mm (x,  (\phi_b)_{b\in J_{x}})\longrightarrow\phi_a. $$
\end{enumerate}
\end{definition}

\begin{theorem}\label{exactseq}~
\begin{enumerate}\itemsep=3pt
\item We have an exact sequence:
$$1\rightarrow \mathbb{C}^* \rightarrow \hat{\mathcal{G}}(L) \stackrel{g}{\rightarrow} V(E) \rightarrow 0.$$
\item $$\hat{\mathcal{G}}(L)\mid_{\vc_a}\simeq \nu_a^*\gc(a^*L).$$
\item $$\hat{\mathcal{G}}(L)\mid _{T(E)}\simeq \nu_1^*\C,$$
so there exists a morphism $\sigma ^L:T(E) \rightarrow \hat{\mathcal{G}}(L)$ providing a section of the restriction to $T(E)$ of the sequence above ($\sigma(x)$ is defined as the unique element of $\hat{\mathcal{G}}(L)$ lifting the identity).
\end{enumerate}
\end{theorem}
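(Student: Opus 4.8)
The plan is to prove the three statements essentially in parallel, building everything on the structure already established for the finite Heisenberg groups $\gc(a^*L)$ and the covering maps $\delta_{a,L}$ of Remark \ref{isomorphism}. The central extension structure of each $\gc(a^*L)$ over $K(a^*L)$, together with the inverse limit defining $V(E)$, should make the adelic sequence fall out formally.

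\begin{proof}
(1) The center of each $\gc(a^*L)$ is the same copy of $\C^*$, namely the scalar automorphisms of the fibers, and the covering relations in Definition \ref{AHG} fix this $\C^*$ pointwise; hence $\C^*$ embeds in $\hat{\gc}(L)$ as the coherent systems lying over $0\in V(E)$, giving the kernel of $g:(x,(\phi_a)_a)\mapsto x$. For exactness in the middle it suffices to show $g$ is surjective: given $x\in V(E)$ we must produce a coherent system $(\phi_a)_{a\in J_x}$. Fix any $a_0\in J_x$ and any $\phi_{a_0}\in\gc(a_0^*L)_{x_{a_0}}$ (this is nonempty since $x_{a_0}\in K(a_0^*L)$ by definition of $J_x$); for $b\mid a_0$, Theorem \ref{isomorphismThm} applied to the relation $x_{a_0}=(a_0/b)\,x_b$ guarantees a compatible $\phi_b$, and for a general $a\in J_x$ one passes through a common multiple with $a_0$ and uses the transitivity of the covering maps $\delta$. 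The only subtlety is that the resulting system be independent of the intermediate choices, which follows from uniqueness in Lemma \ref{covering}.

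(2) The restriction to $\vc_a=\nu_a^{-1}(K(a^*L))$ is where $a\in J_x$ holds for every point, so the morphism $\gamma_a$ of Definition \ref{AHG}(3) is everywhere defined. I would check that $\gamma_a$ together with $g$ identifies $\hat{\gc}(L)\mid_{\vc_a}$ with the pullback $\nu_a^*\gc(a^*L)=\{(x,\phi):\phi\in\gc(a^*L)_{\nu_a(x)}\}$: the map $(x,(\phi_b)_b)\mapsto(x,\phi_a)$ is a group homomorphism by coherence, it is surjective because any single $\phi_a$ extends to a coherent system exactly as in part (1), and it is injective because coherence propagates $\phi_a$ uniquely to all other $\phi_b$ via $\delta_{b,a^*L}(\phi_{ab})=\phi_a$.

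(3) Finally, over $T(E)=\nu_1^{-1}(0)$ we have $x_1=0$, so $K(L)\ni x_1$ forces $1\in J_x$ and $\gamma_1$ lands in $\gc(L)_0=\C^*$, the fiber automorphisms over the identity translation. Thus $\hat{\gc}(L)\mid_{T(E)}\simeq\nu_1^*\C$, and the constant section $x\mapsto(x,\mathrm{id})$ of the trivial automorphism yields $\sigma^L$. Concretely, $\sigma^L(x)$ is the unique coherent system over $x\in T(E)$ whose component $\phi_1$ is the identity of $L$; uniqueness is again Lemma \ref{covering}, and it splits the sequence of (1) restricted to $T(E)$.

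\textbf{The main obstacle} I anticipate is the surjectivity/coherence argument in part (1): one must verify that the choices of $\phi_b$ produced by Theorem \ref{isomorphismThm} over different divisibility chains agree on overlaps, i.e.\ that the inverse system $\{\gc(a^*L)_{x_a}\}$ indexed by divisibility has the Mittag-Leffler-type compatibility needed for a nonempty inverse limit. This reduces cleanly to the uniqueness clause of Lemma \ref{covering} and the transitivity $\delta_{b,a^*L}\circ\delta_{c,(ab)^*L}=\delta_{bc,a^*L}$, but it is the one place where genuine bookkeeping, rather than a formal diagram chase, is required.
\end{proof}
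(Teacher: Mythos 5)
Your proof is correct and follows essentially the same route as the paper's: identify the kernel with the scalar systems over $0$, get surjectivity by extending a single component $\phi_{a_0}$ to a coherent system through the covering bijections of Remark \ref{isomorphism}, prove (2) by matching $\hat{\mathcal{G}}(L)\mid_{\vc_a}$ with the pullback $\nu_a^*\gc(a^*L)$ via $\gamma_a$ and $g$, and obtain (3) as the case $a=1$ restricted to $T(E)$. The only ingredient the paper makes explicit that you leave implicit is the nonemptiness of $J_x$ for every $x\in V(E)$ --- needed before you can ``fix any $a_0\in J_x$'' at all --- which is exactly what the paper's citation of Lemma \ref{IeJ} supplies.
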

\begin{proof}
(1) By Lemma \ref{IeJ} the map $\hat{\mathcal{G}}(L) \rightarrow V(E)$ is surjective because $\overline{\frac{1}{kd}\Lambda \cap R}= J_x\not=\emptyset$, $\forall x\in V(E)$, so   there are  coherent systems of automorphisms defined over any $x\in V(E)$. Furthermore, for any $x\in V(E)$,  Remark \ref{isomorphism} implies that any coherent system of automorphisms  $(\phi_a)_{a\in J_{x}} $, $\phi_a \in \mathcal{G}(a^*L)_{x_a}$, is uniquely determined by any of its  components, so we have:
$$ \hat{\mathcal{G}}(L) _{x}\simeq \mathcal{G}(a^*L)_{x_a}\simeq \mathbb{C}^*, \hskip2mm \forall a\in J_x.$$
(2) Consider    the pull-back  diagram
\begin{center}
 \begin{tikzpicture}[description/.style={fill=white,inner sep=4pt},normal line/.style={->,font=\footnotesize,shorten >=4pt,shorten <=4pt}]
 \matrix (m) [matrix of math nodes, row sep=3em, column sep=3.5em, text height=1.5ex, text depth=0.25ex]
 { \nu_a^*\gc(a^*L) & \gc(a^*L) \\ \vc_a & K(a^*L) \\ };
 \path[normal line] (m-1-1) edge node[left] {} (m-2-1);
 \path[normal line] (m-1-2) edge node[right] {$g_{a^*L}$} (m-2-2);
 \path[normal line] (m-1-1) edge node[above] {} (m-1-2);
 \path[normal line] (m-2-1) edge node[above] {$\nu_a$} (m-2-2);
 \end{tikzpicture}
 \end{center}
Obviously the morphisms  $g_{a^*L}\circ\gamma_a$ and $\nu_a\circ g$ coincide on  $\hat{\mathcal{G}}(L)\mid_{\vc_a}$ so,   by the universal property of pull-back,  there is a unique morphism 
$\iota:\hat{\mathcal{G}}(L)\mid_{\vc_a} \longrightarrow \nu_a^*\gc(a^*L)$ s.t.
\begin{center}
 \begin{tikzpicture}[description/.style={fill=white,inner sep=4pt},normal line/.style={->,font=\footnotesize,shorten >=4pt,shorten <=4pt}]
 \matrix (m) [matrix of math nodes, row sep=3em, column sep=3.5em, text height=1.5ex, text depth=0.25ex]
 { \hat{\mathcal{G}}(L)\mid_{\vc_a} & \nu_a^*\gc(a^*L) \\ \vc_a & \vc_a  \\ };
 \path[normal line] (m-1-1) edge node[left] { } (m-2-1);
 \path[normal line] (m-1-2) edge node[right] {} (m-2-2);
 \path[normal line] (m-1-1) edge node[above] {$\iota$} (m-1-2);
 \path[normal line] (m-2-1) edge node[above] {id} (m-2-2);
 \end{tikzpicture}
 \end{center}
commutes.
Finally, $\iota$ is an isomorphism because it is bijective on the fibers (both isomorphic to $\C^*$) of the vertical maps  of the last diagram.  
\par\noindent
(3) This  is a particular case of (2).   
\end{proof}

\begin{lemma}
\label{ewelldefined}
Let $(x,  (\phi_a)_{a\in J_{x}}), (y,  (\phi_a)_{a\in J_{y}})\in \hat{\mathcal{G}}(L)$ and $a, b\in J_x\cap J_y$, then 
$$ e^{a^*L}(x_a, y_a)=e^{b^*L}(x_b, y_b)$$
\end{lemma}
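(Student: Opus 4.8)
The plan is to reduce the whole statement to the explicit formula \eqref{e} for the commutator, and to exploit the way the alternating form attached to $L$ rescales under the isogeny $a$. The conceptually decisive first step is to record how the Chern form of $a^*L$ compares with that of $L$. Writing $H(z,w)=rz\overline{w}$ for the Hermitian form attached to $L$ (so that $c_1(L)$ is $(z,w)\mapsto r\Im(z\overline w)$), pullback along the multiplication-by-$a$ map $z\mapsto az$ gives $a^*H(z,w)=H(az,aw)=|a|^2H(z,w)$; hence the alternating form attached to $a^*L$ is $(z,w)\mapsto |a|^2 r\,\Im(z\overline w)$, i.e.\ the constant $r$ is replaced by $|a|^2 r$. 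This is consistent with \eqref{r} (the denominator there depends only on $\Lambda$, while $\deg a^*L=|a|^2 d$ by Remark \ref{a2}) and with \eqref{KaL} via \eqref{Lambda}. Consequently \eqref{e} reads, for points of $K(a^*L)$ with lifts $z,w\in\mathbb{C}$ under $\pi_\Lambda$,
\[
e^{a^*L}\bigl(\pi_\Lambda(z),\pi_\Lambda(w)\bigr)=\exp\{-2\pi i\,|a|^2 r\,\Im(z\overline w)\}.
\]

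Next I would reduce the statement to the case in which one index divides the other. By Lemma \ref{IeJ} the sets $J_x$ and $J_y$ are ideals of $R$ (each is the complex conjugate of an ideal of the form $\tfrac{1}{kd}\Lambda\cap R$), so they are closed under multiplication by $R$; in particular $m:=ab$ lies in $J_x\cap J_y$ and is divisible by both $a$ and $b$. Thus it suffices to prove the equality when $b\mid a$, and then apply it to the divisibilities $a\mid m$ and $b\mid m$ to get $e^{a^*L}(x_a,y_a)=e^{m^*L}(x_m,y_m)=e^{b^*L}(x_b,y_b)$ in general.

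So assume $a=bc$ with $c\in R$. The coherence relations defining $V(E)$ give $x_b=\tfrac{a}{b}x_a=cx_a$ and $y_b=cy_a$. Choosing lifts $k_a,l_a\in\mathbb{C}$ of $x_a,y_a$ under $\pi_\Lambda$, we may therefore take $k_b:=ck_a$ and $l_b:=cl_a$ as lifts of $x_b,y_b$ (this is legitimate since $\pi_\Lambda(ck_a)=c\,\pi_\Lambda(k_a)=cx_a=x_b$). Plugging into the displayed formula and using $\Im(ck_a\overline{cl_a})=|c|^2\Im(k_a\overline{l_a})$ together with $|b|^2|c|^2=|a|^2$ yields
\[
e^{b^*L}(x_b,y_b)=\exp\{-2\pi i\,|b|^2 r\,|c|^2\Im(k_a\overline{l_a})\}=\exp\{-2\pi i\,|a|^2 r\,\Im(k_a\overline{l_a})\}=e^{a^*L}(x_a,y_a),
\]
as desired. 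The argument is essentially bookkeeping; the one genuine point to get right is the scaling $r\mapsto|a|^2 r$ of the alternating form under $a^*$ in the first step, after which the factors $|a|^2$ cancel exactly against the division of the lifts by $a$. The only mild subtlety I expect is checking the coherence of the lifts, which is immediate from the $V(E)$-relations as noted above.
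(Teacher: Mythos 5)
Your proof is correct, and it rests on the same two ingredients as the paper's: the explicit exponential formula \eqref{e} for the commutator pairing, and passage through the common index $ab$. The organization, however, is genuinely different. The paper never rescales the constant $r$: it sets up commutative diagrams identifying the source of the isogeny $a$ with $\mathbb{C}/a\Lambda$ (uniformizations $\pi_{a\Lambda}$, $\pi_{b\Lambda}$, $\pi_{ab\Lambda}$, under which the multiplication maps are covered by the identity of $\mathbb{C}$), chooses a single pair of lifts $z,v$ of $x_{ab},y_{ab}$, and observes that the very same $z,v$ serve as lifts of $x_a,y_a$ and of $x_b,y_b$ in the respective uniformizations, so that \eqref{e} with the unchanged constant $r$ yields $e^{a^*L}(x_a,y_a)=\exp\{-2\pi i\,r\Im(z\overline{v})\}=e^{b^*L}(x_b,y_b)$ in one stroke; in particular the pairing at level $ab$ is never evaluated, so the paper needs no statement about $ab\in J_x\cap J_y$. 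You instead freeze the single uniformization $\pi_\Lambda$, make the rescaling $c_1(a^*L)=|a|^2 r\,\Im(z\overline{w})$ explicit, and decompose the argument into a divisibility case followed by composition through $m=ab$; because you do evaluate $e^{(ab)^*L}(x_{ab},y_{ab})$, you need --- and correctly supply, via Lemma \ref{IeJ} --- the fact that $J_x$ and $J_y$ are ideals of $R$, whence $ab\in J_x\cap J_y$. The two computations are equivalent (your cancellation $|b|^2|c|^2=|a|^2$ is exactly the paper's ``same lift, same $r$'' statement read in a fixed uniformization), but yours is more self-contained: it states explicitly the scaling of the alternating form that the paper encodes implicitly in its tersely introduced uniformizations $\pi_{a\Lambda}$, at the modest cost of one extra ideal-theoretic check.
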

\begin{proof}
We argue as in the proof of Lemma \ref{lemmaequivalence}.
We have  commutative diagrams
 \begin{center}
 \begin{tikzpicture}[description/.style={fill=white,inner sep=4pt},normal line/.style={->,font=\footnotesize,shorten >=4pt,shorten <=4pt}]
 \matrix (m) [matrix of math nodes, row sep=3em, column sep=3.5em, text height=1.5ex, text depth=0.25ex]
 { \mathbb{C} & \mathbb{C} \\ E & E \\ };
 \path[normal line] (m-1-1) edge node[left] {$\pi_{b\Lambda}$} (m-2-1);
 \path[normal line] (m-1-2) edge node[right] {$\pi_{ab\Lambda}$} (m-2-2);
 \path[normal line] (m-1-1) edge node[above] {id} (m-1-2);
 \path[normal line] (m-2-1) edge node[above] {a} (m-2-2);
 \end{tikzpicture}
 \end{center}
 and
 \begin{center}
 \begin{tikzpicture}[description/.style={fill=white,inner sep=4pt},normal line/.style={->,font=\footnotesize,shorten >=4pt,shorten <=4pt}]
 \matrix (m) [matrix of math nodes, row sep=3em, column sep=3.5em, text height=1.5ex, text depth=0.25ex]
 { \mathbb{C} & \mathbb{C} \\ E & E \\ };
 \path[normal line] (m-1-1) edge node[left] {$\pi_{a\Lambda}$} (m-2-1);
 \path[normal line] (m-1-2) edge node[right] {$\pi_{ab\Lambda}$} (m-2-2);
 \path[normal line] (m-1-1) edge node[above] {id} (m-1-2);
 \path[normal line] (m-2-1) edge node[above] {b} (m-2-2);
 \end{tikzpicture}
 \end{center}
 so if we choose $z, v\in \C$ s.t. 
 $$\pi_{ab\Lambda}(z)=x_{ab}, \hskip2mm \pi_{ab\Lambda}(v)=y_{ab}$$ 
 then 
 $$\pi_{a\Lambda}(z)=x_{a}, \hskip1mm \pi_{a\Lambda}(v)=y_{a}, \hskip1mm \pi_{b\Lambda}(z)=x_{b}, \hskip2mm \pi_{b\Lambda}(v)=x_{b}.$$
Finally, \eqref{e} implies
$$e^{a^*L}(x_a, y_a)=\exp\{-2\pi ir\Im(z\overline{v}) \}=e^{b^*L}(x_b, y_b).
$$
\end{proof}

\begin{remark} Similarly as above (compare with Definition \ref{Heisenberg}),  we can define   
 a \textit{commutator map}:
$$\tilde{e}^L: V(E)\times V(E) \rightarrow \mathbb{C}^1, \hskip5mm \tilde{e}^L(x, x')=yy'y^{-1}y'^{-1}$$ where
 $y, y'\in \hat{\mathcal{G}}(L)$ are chosen in such a way that $g(y)=x, g(y')=x'$. By Lemma \ref{ewelldefined} we have  
\begin{equation}\label{etilde} \tilde{e}^L(x, x')=e^{a^*L}(x_a, x'_a)\in \boldsymbol{\mu}, \hskip1mm \forall a\in J_x\cap J_{x'}. 
\end{equation}
with values in the group of the roots of unity.
\end{remark}

\begin{theorem}
Let $x_a\in E_{tor}$ and set $$\tilde{A}_{x_a}:=\nu_a^{-1}(x_a)\times \nu_a^{-1}(x_a)\subset V(E)\times V(E).$$    Then the restriction 
$\tilde{e}^L(\cdot, \cdot)\mid_{\tilde{A}_{x_a}}$ is constant iff $x_a \in K(a^*L)=E[\vert a \vert ^2 d]$.
\end{theorem}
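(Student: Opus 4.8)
The plan is to reduce the constancy of $\tilde e^L$ on $\tilde A_{x_a}$ to the triviality of one character, and then to evaluate that character by lifting to $\C$.

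\emph{Reduction.} Since $\nu_a$ is surjective, I would fix a base point $w^0\in\nu_a^{-1}(x_a)$ and write a generic pair of $\tilde A_{x_a}$ as $(w^0+h,\,w^0+h')$ with $h,h'\in\ker\nu_a$. As $\tilde e^L$ is the commutator pairing \eqref{etilde}, it is bimultiplicative and alternating ($\tilde e^L(w,w)=1$), so
$$\tilde e^L(w^0+h,\,w^0+h')=\tilde e^L(w^0,h')\,\tilde e^L(w^0,h)^{-1}\,\tilde e^L(h,h').$$
Here $\ker\nu_a\subseteq\ker\nu_1=T(E)$, because $h_a=0$ forces $h_1=a\,h_a=0$; and for $h,h'\in T(E)$ one has $1\in J_h\cap J_{h'}$ and $h_1=h'_1=0$, so \eqref{etilde} gives $\tilde e^L(h,h')=e^L(0,0)=1$. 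The last factor thus always vanishes, and since $\tilde e^L(w^0,w^0)=1$ the restriction $\tilde e^L\mid_{\tilde A_{x_a}}$ is constant (then necessarily $\equiv1$) if and only if the character $\chi\colon h\mapsto\tilde e^L(w^0,h)$ is trivial on $\ker\nu_a$.

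\emph{Evaluation of $\chi$.} Next I would write $h=a k$ with $k\in T(E)$, which indeed lies in $\ker\nu_a$ as $\nu_a(ak)=a\,k_a=0$ (note $k_a\in E[a]$). Choosing $b\in J_{w^0}$ (nonempty by Lemma \ref{IeJ}), one has $b\in J_{w^0}\cap J_h$ since $J_h=R$ — for $k\in T(E)$, $k_c\in E[c]\subseteq E[|c|^2d]=K(c^*L)$ — so \eqref{etilde} gives $\chi(ak)=e^{b^*L}(w^0_b,\,a k_b)$. By \eqref{e} together with Remark \ref{a2}, which yields $c_1(b^*L)=|b|^2 c_1(L)$, this pairing is computed through the uniformization $\pi_\Lambda$ with parameter $r|b|^2$. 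Writing $\pi_\Lambda(\kappa)=w^0_1=a\,x_a$, I may lift $w^0_b$ to $\kappa/b$ and $k_b\in E[b]$ to $\mu/b$ with $\mu\in\Lambda$; the factor $|b|^2$ then cancels against the $1/|b|^2$ coming from the two denominators, leaving the $b$-independent value
$$\chi(ak)=\exp\{-2\pi i\,r\,\Im(\kappa\,\overline{a\mu})\}=\exp\{-2\pi i\,r\,\Im(\bar a\kappa\,\bar\mu)\}.$$
As $k$ ranges over $T(E)$ the element $\mu$ runs through all of $\Lambda$, so $\chi$ is trivial exactly when $r\,\Im(\bar a\kappa\,\bar\Lambda)\subseteq\mathbb{Z}$, i.e.\ when $\bar a\kappa\in\Lambda(L)=\tfrac1d\Lambda$ by \eqref{Lambda}.

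\emph{Conclusion.} It then remains to rephrase this in terms of $x_a$: since $\pi_\Lambda(\kappa)=a\,x_a$, the point $x_a$ lifts to $\kappa/a$, and \eqref{K} with Remark \ref{a2} give $x_a\in K(a^*L)=E[|a|^2d]$ iff $\kappa/a\in\tfrac1{|a|^2d}\Lambda$, iff $\bar a\kappa\in\tfrac1d\Lambda$ — precisely the triviality criterion for $\chi$. This proves the equivalence. (The implication $x_a\in K(a^*L)\Rightarrow\tilde e^L\mid_{\tilde A_{x_a}}\text{ constant}$ can also be obtained at once: then $a\in J_w\cap J_{w'}$ for all $w,w'\in\nu_a^{-1}(x_a)$, so $\tilde e^L(w,w')=e^{a^*L}(x_a,x_a)=1$.) I expect the evaluation of $\chi$ to be the delicate part: one must choose admissible $\C$-lifts for a single $b\in J_{w^0}$, check the cancellation of the degree factor $|b|^2$ (so that the value is genuinely $b$-independent, as Lemma \ref{ewelldefined} guarantees), and keep track of which lattice governs each uniformization; everything else is formal.
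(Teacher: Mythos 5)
Your proposal is correct in substance, but it takes a genuinely different route from the paper's proof, and one step of it needs repair.

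\emph{Comparison.} The paper proves the converse implication at finite level: it fixes $b\in R$ with $\bar b x_a\in K(a^*L)$, deduces from Lemma \ref{IeJ} that $ab\in J_x$ for every $x\in\nu_a^{-1}(x_a)$, realizes an arbitrary pair $r,s\in b^{-1}(x_a)$ as the $ab$-components of a pair in $\tilde A_{x_a}$ (so that the constancy hypothesis forces $e^{(ab)^*L}(r,s)=1$), and then invokes Theorem \ref{isomorphismThm} --- that is, Lemma \ref{lemmaequivalence}(2) --- for the bundle $a^*L$ and the isogeny $b$. You instead isolate the group-theoretic mechanism: constancy on $\tilde A_{x_a}$ is equivalent to triviality of the single character $\chi=\tilde e^L(w^0,\cdot)$ on $\ker\nu_a$, which rests on bimultiplicativity of the commutator pairing (standard for central extensions with abelian quotient) and on the isotropy of $T(E)$ for $\tilde e^L$ --- both correct, and neither stated explicitly in the paper --- and you then evaluate $\chi$ on $aT(E)\subseteq\ker\nu_a$ directly from \eqref{e}, in effect reproving Lemma \ref{lemmaequivalence}(2) adelically rather than citing its finite-level form. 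The paper's argument is shorter given the results it has already established; yours exposes more structure (non-constancy is governed by one character on $\ker\nu_a$, and $T(E)$ is isotropic) at the cost of explicit lift bookkeeping. Note that your logic does close even though you only verify the inclusion $aT(E)\subseteq\ker\nu_a$: the converse implication needs only that inclusion, and you give the forward implication by the same direct argument as the paper. (The equality $\ker\nu_a=aT(E)$ does hold: if $h_a=0$, set $k_c:=h_{ac}$; then $k\in T(E)$ and $ak=h$.)

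\emph{The step that fails as written} is the final rephrasing: from $\pi_\Lambda(\kappa)=ax_a$ it does \emph{not} follow that ``$x_a$ lifts to $\kappa/a$''; it only follows that $\pi_\Lambda(\kappa/a)$ lies in $x_a+E[a]$, and your $\kappa$ has already been constrained by the requirement that $\kappa/b$ lift $w^0_b$, so you are not free to re-choose it at this stage. The fix is to make the two constraints compatible from the start: pick a lift $v\in\C$ of $w^0_{ab}$ and set $\kappa:=abv$; then $\kappa/b=av$ lifts $aw^0_{ab}=w^0_b$ and $\kappa/a=bv$ lifts $bw^0_{ab}=w^0_a=x_a$ simultaneously, as required. (Alternatively, keep your arbitrary $\kappa$: all admissible choices differ by elements of $b\Lambda$, and $\bar a\,b\Lambda\subseteq\Lambda\subseteq\tfrac1d\Lambda$, so the criterion $\bar a\kappa\in\tfrac1d\Lambda$ is insensitive to the ambiguity and agrees with the one computed from the compatible choice; hence your final equivalence is true as stated.) With this repair, the evaluation of $\chi$ --- including the cancellation of the factor $|b|^2$ and the $b$-independence guaranteed by Lemma \ref{ewelldefined} --- is correct.
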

\begin{proof}
If $x_a \in K(a^*L)=E[\vert a \vert ^2 d]$ then \ref{etilde} implies
$$\tilde{e}^L(x,x')=e^{a^*L}(x_a, x_a)=1, \hskip1mm \forall (x,x')\in \tilde{A}_{x_a}.  
$$
Conversely, assume that $\tilde{e}^L(\cdot, \cdot)\mid_{\tilde{A}_{x_a}}\equiv 1$ and fix $b\in R$ s.t. $\overline{b}x_a\in K(a^*L)$.  By Lemma \ref{IeJ}
$$x_{ab}\in K((ab)^*L),\hskip1mm \forall x\in  \nu_a^{-1}(x_a)$$ which means that  
\begin{equation}\label{Kab} ab\in J_x, \hskip2mm \forall x\in  \nu_a^{-1}(x_a).        
\end{equation}
Consider $r, s\in b^{-1}(x_a)$ and choose $x, x'\in \nu_a^{-1}(x_a)$ s.t. $x_{ab}=r$ and $x_{ab}'=s$.   
Then $(x,x')\in  \tilde{A}_{x_a}$ and combining  \ref{etilde} with the hypothesis $\tilde{e}^L(\cdot, \cdot)\mid_{\tilde{A}_{x_a}}\equiv 1$
we find
$$e^{(ab)^*L}(r,s)=\tilde{e}^L(x,x')=1, \hskip1mm \forall r, s\in b^{-1}(x_a).
$$
We are done by means of Theorem \ref{isomorphismThm}.    
\end{proof}

\begin{remark}
Despite    the strong link with the Weil Pairing noticed in Remark \ref{Weilp}, last Theorem shows that the commutator map $\tilde{e}^L(\cdot, \cdot)\mid_{\tilde{A}_{x_a}}$ cannot be constant if $x_a \not\in K(a^*L)$. This proves that the bilinear form $\tilde{e}^L(\cdot, \cdot)$ on $V(E)$ \textit{cannot be induced by any bilinear form on $E_{tor}$} (such as the Weil pairing) via projection
$$\nu_a\times \nu_a: V(E)\times V(E)\rightarrow E_{tor}\times E_{tor}.$$
\end{remark}
\bigskip 

Assume now that $L$ is \textit{symmetric}: $(-1)^* L\simeq L$. Correspondingly  we have an involution
$i: \mathcal{G}(L)\to \mathcal{G}(L)$.
By an abuse of notations, we use the same symbol for the corresponding morphism of $\hat{\mathcal{G}}(L)$:
$$ i: \hat{\mathcal{G}}(L) \rightarrow \hat{\mathcal{G}}(L).$$
We recall  the following (\cite{Theta3}, p.~58):

\begin{definition}\label{tau}
Fix $x\in V(E)$, $y\in \hat{\mathcal{G}}(L)$ s.t.~$2g(y)=x$. Then $\tau (x)\in \hat{\mathcal{G}}(L)$ defined as $\tau (x):=y i(y)^{-1}$ does not depend of the choice of $y$, so we have a map:
$$ \tau : V(E)\rightarrow \hat{\mathcal{G}}(L),$$
providing a section of the exact sequence of Theorem \ref{exactseq}.
\end{definition}

\begin{proposition}
\label{notmorphism}
$$\tau (x)\tau (y)=\tilde{e}^L(\tfrac{x}{2},y)\tau (x+y)$$
\end{proposition}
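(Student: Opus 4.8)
The plan is to work directly with the section $\tau$ from Definition \ref{tau} and the commutator pairing $\tilde{e}^L$. Fix $x,y\in V(E)$ and choose lifts $u,v\in\hat{\mathcal{G}}(L)$ with $2g(u)=x$ and $2g(v)=y$, so that by definition $\tau(x)=u\,i(u)^{-1}$ and $\tau(y)=v\,i(v)^{-1}$. The element $uv$ is then a lift of $x+y$ under $g$, but it does not immediately satisfy $2g(uv)=x+y$ only in the sense that $g(uv)=g(u)+g(v)=\tfrac{x}{2}+\tfrac{y}{2}$, which is exactly half of $x+y$; hence $uv$ is a legitimate choice of lift for computing $\tau(x+y)=(uv)\,i(uv)^{-1}$. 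First I would substitute this particular choice into the definition of $\tau(x+y)$ and expand.

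The computation then reduces to comparing $\tau(x)\tau(y)=u\,i(u)^{-1}\,v\,i(v)^{-1}$ with $\tau(x+y)=u\,v\,i(u)^{-1}\,i(v)^{-1}$, using that $i$ is a group morphism so $i(uv)^{-1}=i(u)^{-1}i(v)^{-1}$. The two differ precisely by commuting $i(u)^{-1}$ past $v$. Since $\hat{\mathcal{G}}(L)$ is a central extension of $V(E)$ by $\mathbb{C}^*$, any two elements commute up to the scalar given by the commutator pairing: for lifts $p,q$ one has $p\,q=\tilde{e}^L(g(p),g(q))\,q\,p$, by the very definition of $\tilde{e}^L$ together with its realization \eqref{etilde}. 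Applying this to $p=i(u)^{-1}$ and $q=v$, I would track the pairing value through the involution. The key input is that $g\bigl(i(u)^{-1}\bigr)=-g(i(u))=g(u)=\tfrac{x}{2}$, since the involution $i$ covers the $(-1)$ map on $V(E)$ and inversion flips the sign again; and $g(v)=\tfrac{y}{2}$. Thus the scalar produced is $\tilde{e}^L(\tfrac{x}{2},\tfrac{y}{2})$ or a bilinear variant thereof, and by the bilinearity of $\tilde{e}^L$ (which follows from \eqref{etilde} and the bilinearity of each $e^{a^*L}$) this matches the claimed factor $\tilde{e}^L(\tfrac{x}{2},y)$ after collecting the contributions.

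The main obstacle I anticipate is bookkeeping the sign and the halving correctly: one must verify that, after moving $i(u)^{-1}$ past $v$ and also accounting for the symmetric contribution hidden in how $i$ interacts with the central $\mathbb{C}^*$, the accumulated commutator scalar is exactly $\tilde{e}^L(\tfrac{x}{2},y)$ and not, say, $\tilde{e}^L(\tfrac{x}{2},\tfrac{y}{2})$ or its square. I would resolve this by using bilinearity to write $\tilde{e}^L(\tfrac{x}{2},y)=\tilde{e}^L(\tfrac{x}{2},\tfrac{y}{2})^2$ and checking that the two places where a commutator is invoked each contribute one factor of $\tilde{e}^L(\tfrac{x}{2},\tfrac{y}{2})$; the symmetry $\tilde{e}^L(s,t)=\tilde{e}^L(t,s)^{-1}$ (inherited from \eqref{e}) must be handled carefully so the signs reinforce rather than cancel. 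Once the scalar is pinned down, the identity $\tau(x)\tau(y)=\tilde{e}^L(\tfrac{x}{2},y)\,\tau(x+y)$ follows immediately.
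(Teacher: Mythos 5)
Your proposal is correct and follows essentially the same route as the paper's own proof: expand $\tau(x)\tau(y)$ and $\tau(x+y)$ via the lifts $u$, $v$ and $uv$, use the central-extension commutation relation twice (together with the key fact $g(i(u)^{-1})=g(u)=\tfrac{x}{2}$), and combine the two resulting factors $\tilde{e}^L(\tfrac{x}{2},\tfrac{y}{2})$ into $\tilde{e}^L(\tfrac{x}{2},y)$ by bilinearity. One small slip to fix when writing it up: since $\hat{\mathcal{G}}(L)$ is non-abelian, the morphism property gives $i(uv)^{-1}=i(v)^{-1}i(u)^{-1}$, not $i(u)^{-1}i(v)^{-1}$; this reversal is precisely the second of your ``two places where a commutator is invoked,'' so your final accounting of the scalar is the right one.
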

\begin{proof}
Observe that, for any pair $x'\in \hat{\mathcal{G}}(L)$, $y'\in \hat{\mathcal{G}}(L)$ s.t.~$2g(x')=x$ and $2g(y')=y$ we have 
 $x'y'=\tilde{e}^L(\tfrac{x}{2},\tfrac{y}{2})y'x'$, 
$2g (i(x')^{-1})=x$ and $2g(i(y')^{-1})=y$ (and so $i(x')i(y')=\tilde{e}^L(\tfrac{x}{2},\tfrac{y}{2})i(y')i(x')$). Then we get
\begin{multline*}
\tau(x)\tau(y)=x'i(x')^{-1}y'i(y')^{-1}=\tilde{e}^L(\tfrac{x}{2}, \tfrac{y}{2})x'y'i(x')^{-1}i(y')^{-1} \\
=\tilde{e}^L(\tfrac{x}{2},y)x'y'i(y')^{-1}i(x')^{-1}=\tilde{e}^L(\tfrac{x}{2},y)x'y'i(x'y')^{-1}=\tilde{e}^L(\tfrac{x}{2},y)\tau (x+y).
 \qedhere
\end{multline*}
\end{proof}

\section{The Main Theorem of Complex Multiplication for Adelic Curves}

In this section we are going to study the behavior of the Adelic Heisenberg Group previously defined    under the action of field automorphisms of $\C$. We begin with some technical Lemmas which will be needed in the following.  

\begin{lemma}
\label{extensionsigma}
Consider two normalized (\cite{Sil} \S II, Proposition 1.1)   elliptic curves $E$, $E'$ s.t. $End(E)\simeq End(E')\simeq R$,   equipped with line bundles
$L\rightarrow E$, $L'\rightarrow E'$. Assume there are bijective maps $\sigma:E\rightarrow E'$,
$\tilde{\sigma}: L\rightarrow L'$ making commutative the following diagram
\begin{center}
 \begin{tikzpicture}[description/.style={fill=white,inner sep=4pt},normal line/.style={->,font=\footnotesize,shorten >=4pt,shorten <=4pt}]
 \matrix (m) [matrix of math nodes, row sep=3em, column sep=3.5em, text height=1.5ex, text depth=0.25ex]
 { L & L' \\ E & E' \\ };
 \path[normal line] (m-1-1) edge node[left] {p} (m-2-1);
 \path[normal line] (m-1-2) edge node[right] {q} (m-2-2);
 \path[normal line] (m-1-1) edge node[above] {$\tilde{\sigma}$} (m-1-2);
 \path[normal line] (m-2-1) edge node[above] {$\sigma$} (m-2-2);
 \end{tikzpicture}
 \end{center}
with $\sigma $ commuting with any $a\in R$.
Then for any $a\in R$ there exists a unique bijective $\tilde{\sigma}_a: a^*L\rightarrow a^*L'$ satisfying 
$q_a\circ \tilde{\sigma}_a=\sigma \circ p_a$ and $\tilde{a}\circ \tilde{\sigma}_a =\tilde{\sigma}\circ \tilde{a}
$ ($p_a: a^*L\to E$, $q_a: a^*L'\to E'$ denote the natural projections). Furthermore, we have
\begin{equation}\label{tildesigmacommutes}
\tilde{a}\circ \tilde{\sigma}_{ab}=\tilde{\sigma}_b\circ \tilde{a}: (ab)^*L\rightarrow b^*L', \hskip2mm  \forall a,b \in R.  
\end{equation}
\end{lemma}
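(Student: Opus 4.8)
The plan is to treat the entire statement as a formal diagram chase driven by the universal property of the defining pull-back squares, exactly as in the pull-back diagram introduced before Lemma \ref{covering}. Recall that $a^{*}L'$ is the fibre product of $q: L' \to E'$ and $a: E' \to E'$, with structure maps $q_a: a^{*}L' \to E'$ and $\tilde a: a^{*}L' \to L'$. To construct $\tilde\sigma_a$ I would feed into this universal property the two maps $\sigma \circ p_a: a^{*}L \to E'$ and $\tilde\sigma \circ \tilde a: a^{*}L \to L'$. These are compatible: composing the first with $a$ gives $a \circ \sigma \circ p_a = \sigma \circ a \circ p_a = \sigma \circ p \circ \tilde a$ (using that $\sigma$ commutes with $a$, together with $a \circ p_a = p \circ \tilde a$ from the pull-back square for $a^{*}L$), while composing the second with $q$ gives $q \circ \tilde\sigma \circ \tilde a = \sigma \circ p \circ \tilde a$ by the hypothesis $q \circ \tilde\sigma = \sigma \circ p$. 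Since both composites equal $\sigma \circ p \circ \tilde a$, the universal property yields a unique $\tilde\sigma_a$ with $q_a \circ \tilde\sigma_a = \sigma \circ p_a$ and $\tilde a \circ \tilde\sigma_a = \tilde\sigma \circ \tilde a$.

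For bijectivity, I would run the same construction starting from $\sigma^{-1}$ and $\tilde\sigma^{-1}$, which satisfy the symmetric commutation ($\sigma^{-1}$ commutes with $a$) and the symmetric square $p \circ \tilde\sigma^{-1} = \sigma^{-1} \circ q$. This produces a map $a^{*}L' \to a^{*}L$, and a second application of the uniqueness clause (to the identity, which trivially satisfies the defining relations) shows the two composites are the identities of $a^{*}L$ and $a^{*}L'$; hence $\tilde\sigma_a$ is bijective with the expected inverse.

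For \eqref{tildesigmacommutes} the key remark is that both $\tilde a \circ \tilde\sigma_{ab}$ and $\tilde\sigma_b \circ \tilde a$ are maps $(ab)^{*}L \to b^{*}L'$, so by the uniqueness clause of the universal property of $b^{*}L'$ it suffices to check they agree after composing with its two structure maps $q_b$ and $\tilde b$. Composing with $q_b$ and using the covering relation $q_b \circ \tilde a = a \circ q_{ab}$ together with the defining property $q_{ab} \circ \tilde\sigma_{ab} = \sigma \circ p_{ab}$ reduces the left-hand side to $a \circ \sigma \circ p_{ab}$; the right-hand side gives $\sigma \circ p_b \circ \tilde a = \sigma \circ a \circ p_{ab} = a \circ \sigma \circ p_{ab}$, using $p_b \circ \tilde a = a \circ p_{ab}$ and that $\sigma$ commutes with $a$. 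Composing with $\tilde b$ and using the transitivity of pull-backs $\tilde b \circ \tilde a = \widetilde{ab}$ (on both the $L$ and the $L'$ towers) together with the two defining relations $\widetilde{ab} \circ \tilde\sigma_{ab} = \tilde\sigma \circ \widetilde{ab}$ and $\tilde b \circ \tilde\sigma_b = \tilde\sigma \circ \tilde b$ reduces both sides to $\tilde\sigma \circ \widetilde{ab}$. Since the two composites agree on both structure maps, uniqueness forces $\tilde a \circ \tilde\sigma_{ab} = \tilde\sigma_b \circ \tilde a$.

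The argument is entirely formal once the universal properties are in place, so I do not expect a conceptual obstacle; the only real difficulty is organizational bookkeeping — keeping straight the several maps all denoted $\tilde a$ (on the $L$-tower versus the $L'$-tower, and between the pull-backs along $ab$ and along $b$) and invoking the pull-back transitivity $\tilde b \circ \tilde a = \widetilde{ab}$ on the correct tower. Care is also needed to apply the commutation hypothesis $\sigma \circ a = a \circ \sigma$ at exactly the two points where the $E'$-components are matched.
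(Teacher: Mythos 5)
Your proposal is correct and follows essentially the same route as the paper: you construct $\tilde{\sigma}_a$ by feeding $\sigma \circ p_a$ and $\tilde{\sigma}\circ \tilde{a}$ into the universal property of the pull-back $a^*L'$, and you prove \eqref{tildesigmacommutes} by checking that both sides agree after composition with the two structure maps $q_b$ and $\tilde{b}$ of $b^*L'$ and then invoking uniqueness, which is exactly the paper's argument. The only difference is that you also spell out the bijectivity of $\tilde{\sigma}_a$ (building the inverse from $\sigma^{-1}$, $\tilde{\sigma}^{-1}$ and using uniqueness against the identity), a point the paper's proof leaves implicit.
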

\begin{proof}
Consider the maps $\sigma \circ p_a: a^*L \rightarrow E'$, $\tilde{\sigma}\circ \tilde{a}: a^*L \rightarrow L'.$
Since $\sigma $ commutes with $a$, we have
$$a\circ \sigma \circ p_a=\sigma \circ a \circ p_a= \sigma \circ p \circ \tilde {a}=q\circ \tilde{\sigma}\circ \tilde{a},$$
and the Universal Property implies there exists a unique $\tilde{\sigma}_a: a^*L\rightarrow a^*L'$ s.t.
$q_a\circ \tilde{\sigma}_a=\sigma \circ p_a$ and $\tilde{a}\circ \tilde{\sigma}_a =\tilde{\sigma}\circ \tilde{a}$.  
\par\noindent
In order to prove \ref{tildesigmacommutes} it suffices show that
\begin{enumerate}
\item $\tilde{b}\circ \tilde{a}\circ \tilde{\sigma}_{ab}=\tilde{b}\circ\tilde{\sigma}_b\circ \tilde{a}$,
\item $q_b\circ \tilde{a}\circ \tilde{\sigma}_{ab}=q_b\circ\tilde{\sigma}_b\circ \tilde{a}$.
\end{enumerate}
(1) from $\tilde{b}\circ \tilde{\sigma}_{b}=\tilde{\sigma}\circ \tilde{b}$ and $\tilde{ab}\circ \tilde{\sigma}_{ab}=\tilde{\sigma}\circ \tilde{ab}$  we have
$$\tilde{b}\circ \tilde{a}\circ \tilde{\sigma}_{ab}=  \tilde{\sigma}\circ \tilde{b}\circ \tilde{a}=\tilde{b}\circ \tilde{\sigma}_{b}\circ \tilde{a}.$$
(2) from  $q_b\circ \tilde{\sigma}_{b}=\sigma\circ p_b$ and $q_{ab}\circ \tilde{\sigma}_{ab}=\sigma\circ p_{ab}$ we have
$$q_b\circ\tilde{\sigma}_b\circ \tilde{a}=\sigma\circ p_b\circ \tilde{a}=\sigma\circ a\circ p_{ab}=a\circ \sigma\circ  p_{ab}=a\circ q_{ab}\circ \tilde{\sigma}_{ab}
=q_b\circ \tilde{a}\circ \tilde{\sigma}_{ab}.  
$$
\end{proof}

\begin{lemma}
\label{extensionphi} Keep the hypothesis of Lemma \ref{extensionsigma} and assume additionally that $L$ and $L'$ are obtained via pull-back of the hyperplane bundle
by means of embeddings $a: E\rightarrow \mathbb{P}(\C^n)$, $b: E'\rightarrow \mathbb{P}(\C^n)$ and that $\tilde{\sigma}$ is induced by a map
$\tilde{\sigma}: (\C^n)^* \rightarrow (\C^n)^*$,   obtained by acting on each coordinate with $\sigma \in \mathrm{Aut}(\mathbb{C})$.
Then we have an isomorphism
$$ \gc(L) \leftrightarrow \gc(L'), \hskip2mm \phi \leftrightarrow \phi^{\sigma}:=\tilde{\sigma} \circ \phi \circ \tilde{\sigma}^{-1}.   
$$
\end{lemma}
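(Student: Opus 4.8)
The plan is to show that conjugation by $\tilde{\sigma}$ carries $\gc(L)$ bijectively onto $\gc(L')$ and respects composition. Compatibility with composition is immediate, $(\phi_1\circ\phi_2)^{\sigma}=\tilde{\sigma}\circ\phi_1\circ\phi_2\circ\tilde{\sigma}^{-1}=\phi_1^{\sigma}\circ\phi_2^{\sigma}$, and the symmetric hypotheses applied to the pair $(E',L')\to(E,L)$ with $\sigma^{-1}\in\mathrm{Aut}(\C)$ produce the inverse correspondence $\psi\mapsto\tilde{\sigma}^{-1}\circ\psi\circ\tilde{\sigma}$. Hence the only genuine content is that $\phi^{\sigma}$ really lands in $\gc(L')$, i.e.\ that it is a \emph{biholomorphic} automorphism of $L'$ over a point of $K(L')$. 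First I would pin down the base point; then I would settle holomorphy, which is the crux.

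For the base point, recall from the diagram of Lemma \ref{extensionsigma} that $q\circ\tilde{\sigma}=\sigma\circ p$, whence also $p\circ\tilde{\sigma}^{-1}=\sigma^{-1}\circ q$. If $\phi\in\gc(L)_x$, so that $p\circ\phi=t_x\circ p$, substitution gives $q\circ\phi^{\sigma}=\sigma\circ t_x\circ\sigma^{-1}\circ q$. Since $E$ and $E'$ are normalized CM curves, defined over a number field, and $\sigma$ is induced by a field automorphism, the map $\sigma\colon E\to E'$ respects the group law (the addition morphism is defined over the base field and is carried by $\sigma$ to that of $E'$), so $\sigma\circ t_x\circ\sigma^{-1}=t_{\sigma(x)}$ and therefore $q\circ\phi^{\sigma}=t_{\sigma(x)}\circ q$. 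Moreover $\sigma$ commutes with $[d]$ by the hypothesis of Lemma \ref{extensionsigma}, so $\sigma\big(K(L)\big)=\sigma(E[d])=E'[d]=K(L')$ and $\sigma(x)\in K(L')$. Thus, as a set-theoretic map, $\phi^{\sigma}$ covers $t_{\sigma(x)}$ with $\sigma(x)\in K(L')$.

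The hard part is holomorphy, since $\tilde{\sigma}$ is \emph{not} holomorphic (a general $\sigma\in\mathrm{Aut}(\C)$ is discontinuous). I would circumvent this through the linear realization supplied by the hypotheses. Embedding $E$ by the complete linear system $|L|$ identifies $\gc(L)$ with the theta group acting $\C$-linearly on $H^0(E,L)=(\C^n)^*$, a subgroup of $GL\big((\C^n)^*\big)$ whose centre $\C^*$ acts by scalars. Representing $\phi$ by a matrix $M$ with entries in $\C$ and using that $\tilde{\sigma}$ acts coordinatewise by $\sigma$, a one-line computation yields $\tilde{\sigma}\circ\phi\circ\tilde{\sigma}^{-1}=\sigma(M)$, the matrix obtained by applying $\sigma$ to each entry of $M$. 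The point is that $\sigma(M)$ is again a bona fide complex matrix, hence a $\C$-linear — in particular holomorphic — automorphism: the two semilinear twists cancel. Since $\sigma$ transports the entire picture $(E,L,a)$ to $(E',L',b)$, it carries the subgroup $\gc(L)$ onto $\gc(L')$; together with the base-point computation this shows $\phi^{\sigma}=\sigma(M)\in\gc(L')_{\sigma(x)}$.

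Finally I would record that the correspondence restricts on the central $\C^*$ to the field automorphism $\sigma\colon\C^*\to\C^*$, which confirms that $\phi\leftrightarrow\phi^{\sigma}$ is a genuine (non-$\C$-linear) group isomorphism and is precisely the feature exploited in Section $6$. In summary, the only real obstacle, the discontinuity of $\tilde{\sigma}$, is dissolved by the cancellation of semilinearities in $\tilde{\sigma}M\tilde{\sigma}^{-1}=\sigma(M)$; everything else is the formal bookkeeping of conjugation.
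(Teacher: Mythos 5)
Your proof is correct and takes essentially the same route as the paper: both rest on the canonical representation, under which $\phi$ becomes a matrix $U\in GL(\C^n)$ and conjugation by the coordinatewise map $\tilde{\sigma}$ becomes the entrywise action $U\mapsto\sigma(U)$, so the two semilinear twists cancel and the result is again a $\C$-linear, hence holomorphic, automorphism. The additional bookkeeping you supply (the base-point computation $\phi^{\sigma}\in\gc(L')_{\sigma(x)}$, the inverse via $\sigma^{-1}$, and the behaviour on the centre $\C^*$) is exactly what the paper's one-line proof leaves implicit.
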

\begin{proof}
The proof is immediate since any $\phi \in \gc(L)$ correspond by a matrix $U\in GL(\C^n)$ via \textit{canonical representation} (\cite{CAV}, \S 6.4) and it is immediate to check that $\phi^{\sigma}$ is represented by $\sigma(U)$.  
\end{proof}

First of all we would like to study the behavior of the Adelic Heisenberg Group    under the action of field automorphisms of $\C$ fixing $E$.

Consider an elliptic curve $E$  with complex multiplication by $R$, embedded  in  $\mathbb{P}^2= \mathbb{P}(\C^3)$ by means of the Weierstrass model and assume it is   defined over a field $\mL $. Denote by $\sigma$ a field 
automorphisms   of $\C$ fixing $\K \cdot \mL$. By \cite{Sil} \S II, Theorem 4.1, $\sigma $ fix also
 $\mathbb{H}$, the  \textit{Hilbert field} of $\mathbb{K}$,  and may be interpreted as an element $\sigma \in Gal(\overline{\mathbb{H}},\mathbb{H})$.
Consider also a normalization $E\simeq \frac{\C}{\Lambda}$ (\cite{Sil}, \S II.1) with $\Lambda $ a fractional ideal of $\K$ and denote by
$L$  the line bundle providing the embedding $\frac{\C}{\Lambda}\to \mathbb{P}^2$. Observe that $L$ is symmetric so all the results of Section \ref{AHGsec}
can be applied.

\begin{theorem}
\label{sigmataucommute}
The field automorphism $\sigma $ acts as automorphism of $\hat{\mathcal{G}}(L)$  and we have: $\tau (\sigma (x))=\sigma (\tau (x))\in \hat{\mathcal{G}}(L)$, $\forall x\in V(E)$.
\end{theorem}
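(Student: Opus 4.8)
The plan is to realize the $\sigma$-action on $\hat{\mathcal{G}}(L)$ componentwise out of Lemmas \ref{extensionsigma} and \ref{extensionphi}, and then to deduce $\tau(\sigma(x))=\sigma(\tau(x))$ from two facts: that $\sigma$ commutes with the projection $g\colon \hat{\mathcal{G}}(L)\to V(E)$, and that it commutes with the involution $i$. First I set up the geometric input. Since $E$ is defined over $\mL$ and $\sigma$ fixes $\K\cdot\mL\supseteq\mathbb{H}$, the coordinatewise action of $\sigma$ on $\mathbb{P}^2$ restricts to a bijection $\sigma\colon E\to E$ which is a group automorphism and which commutes with every $a\in R=\End(E)$ (the endomorphisms being defined over $\mathbb{H}$). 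Taking $E'=E$, $L'=L$ and $\tilde\sigma\colon L\to L$ the coordinate lift, the hypotheses of Lemma \ref{extensionsigma} are met, so for each $a\in R$ we obtain lifts $\tilde\sigma_a\colon a^*L\to a^*L$ with $p_a\circ\tilde\sigma_a=\sigma\circ p_a$ and satisfying the commutation relation \eqref{tildesigmacommutes}; by Lemma \ref{extensionphi}, $\phi\mapsto\phi^\sigma:=\tilde\sigma_a\circ\phi\circ\tilde\sigma_a^{-1}$ is then a group isomorphism of $\mathcal{G}(a^*L)$.

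Next I define $\sigma$ on $\hat{\mathcal{G}}(L)$ and verify it is an automorphism. Because $\sigma$ preserves torsion and commutes with $R$, it maps $V(E)$ into itself via $\sigma(x)=(\sigma(x_a))_a$, and since $\sigma$ preserves each $K(a^*L)=E[\lvert a\rvert^2 d]$ (by \eqref{KaL}) one has $J_{\sigma(x)}=J_x$. I set $\sigma\bigl(x,(\phi_a)_{a\in J_x}\bigr):=\bigl(\sigma(x),(\phi_a^\sigma)_{a\in J_x}\bigr)$. From $p_a\circ\tilde\sigma_a=\sigma\circ p_a$ one gets $p_a\circ\tilde\sigma_a^{-1}=\sigma^{-1}\circ p_a$ and hence, using $\sigma\circ t_{x_a}\circ\sigma^{-1}=t_{\sigma(x_a)}$, that $\phi_a^\sigma\in\mathcal{G}(a^*L)_{\sigma(x_a)}$. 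Coherence (Definition \ref{AHG}) is preserved: applying \eqref{tildesigmacommutes} with $a$ and $b$ interchanged gives $\tilde b\circ\tilde\sigma_{ab}=\tilde\sigma_a\circ\tilde b$, whence $\tilde b\circ\phi_{ab}^\sigma=\tilde\sigma_a\tilde b\,\phi_{ab}\tilde\sigma_{ab}^{-1}=\tilde\sigma_a\phi_a\tilde b\,\tilde\sigma_{ab}^{-1}=\phi_a^\sigma\circ\tilde b$. Componentwise conjugation is multiplicative and bijective (with inverse built from $\sigma^{-1}$), so $\sigma$ is a group automorphism of $\hat{\mathcal{G}}(L)$, acting on the central $\C^*$ through the field automorphism; this is the first assertion. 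By construction $g\circ\sigma=\sigma\circ g$.

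The main point, and the step I expect to be the genuine obstacle, is that $\sigma$ commutes with the involution $i$. I realize $i$ as conjugation by the symmetric normalization $[-1]_L$ (resp.\ $[-1]_{a^*L}$) of $L$ (resp.\ of the symmetric bundle $a^*L$), acting componentwise on $\hat{\mathcal{G}}(L)$; this conjugation lifts $[-1]$ on $K(L)$ and is the identity on the center, which characterizes $i$. Fix $a\in J_x$ and consider
\[
w_a:=[-1]_{a^*L}^{-1}\circ\tilde\sigma_a^{-1}\circ[-1]_{a^*L}\circ\tilde\sigma_a .
\]
Since $\sigma$ is a group automorphism it commutes with $[-1]$ on the base $E$, so $w_a$ covers $[-1]\circ\sigma^{-1}\circ[-1]\circ\sigma=\mathrm{id}_E$ and is therefore a central scalar. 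Consequently $[-1]_{a^*L}\tilde\sigma_a$ and $\tilde\sigma_a[-1]_{a^*L}$ induce the same conjugation on $\mathcal{G}(a^*L)$, i.e.\ $i_a\bigl(\phi^\sigma\bigr)=\bigl(i_a\phi\bigr)^\sigma$. Passing to components gives $i\circ\sigma=\sigma\circ i$ on $\hat{\mathcal{G}}(L)$. The delicate part here is precisely the reduction to a base-level commutation: the centrality of $w_a$ is what lets the argument bypass any explicit computation of the symmetric normalization, and it relies on $\sigma$ being a genuine group automorphism of $E$ commuting with $[-1]$.

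Finally I conclude. Given $x\in V(E)$, choose $y\in\hat{\mathcal{G}}(L)$ with $2g(y)=x$ as in Definition \ref{tau}. Then $2g(\sigma(y))=\sigma(2g(y))=\sigma(x)$, so $\sigma(y)$ is an admissible choice for computing $\tau(\sigma(x))$, and since $\sigma$ is a homomorphism commuting with $i$,
\[
\sigma(\tau(x))=\sigma\bigl(y\,i(y)^{-1}\bigr)=\sigma(y)\,i(\sigma(y))^{-1}=\tau(\sigma(x)),
\]
which is the desired identity.
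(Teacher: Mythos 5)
Your proposal follows the paper's proof essentially step for step: the same use of Lemmas \ref{extensionsigma} and \ref{extensionphi} to produce the lifts $\tilde{\sigma}_a$ and the conjugations $\phi\mapsto\phi^{\sigma}=\tilde{\sigma}_a\circ\phi\circ\tilde{\sigma}_a^{-1}$, the same verification that $\sigma$ preserves $V(E)$ and that $J_{\sigma(x)}=J_x$, the same coherence computation from \eqref{tildesigmacommutes} (you use it in the form $\tilde{b}\circ\tilde{\sigma}_{ab}=\tilde{\sigma}_a\circ\tilde{b}$, the paper in the form $\tilde{a}\circ\tilde{\sigma}_{ab}=\tilde{\sigma}_b\circ\tilde{a}$; these are the two faces of the same identity), and the same closing computation $\sigma(\tau(x))=\sigma(y)\,i(\sigma(y))^{-1}=\tau(\sigma(x))$.

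The one genuine divergence is the step you flagged as the real obstacle: the commutation $i\circ\sigma=\sigma\circ i$. The paper does not prove this at all; its proof ends with the bare assertion that $\sigma$ ``is a morphism commuting with $i$''. Your argument --- realize $i$ as conjugation by the normalized symmetric lift $[-1]_{a^*L}$, note that $w_a=[-1]_{a^*L}^{-1}\circ\tilde{\sigma}_a^{-1}\circ[-1]_{a^*L}\circ\tilde{\sigma}_a$ covers $\mathrm{id}_E$ because $\sigma$ is a group automorphism of $E$ commuting with $[-1]$, and conclude that $w_a$ is central --- is sound and supplies exactly what the paper omits. Two caveats, both of precision rather than substance. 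First, ``identity on the center and covering $[-1]$ on $K(a^*L)$'' does not characterize an automorphism of $\mathcal{G}(a^*L)$ uniquely (any character of $K(a^*L)$ twists one such automorphism into another); the correct justification is simply that conjugation by the normalized $[-1]_{a^*L}$ \emph{is} Mumford's definition of the involution, so your realization agrees with the paper's $i$ by fiat, not by that characterization. Second, deducing that $w_a$ is a scalar from the fact that it covers the identity requires knowing that $\tilde{\sigma}_a^{-1}\circ[-1]_{a^*L}\circ\tilde{\sigma}_a$ is again a holomorphic (equivalently algebraic) bundle map --- fiberwise linearity is clear since the two semilinear conjugations cancel, but holomorphy is not, as $\tilde{\sigma}_a$ alone is wildly discontinuous. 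It holds because Galois conjugation of algebraic maps between varieties defined over the $\sigma$-fixed field $\K\cdot\mL$ is again algebraic, which is the same tacit fact the paper itself relies on in Lemma \ref{extensionphi}; a sentence acknowledging this would make your argument complete.
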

\begin{proof}
Since $\sigma $ fixes $\mL$, it provides a bijection of $E_{tor}$ which is induced (compare with Lemma \ref{extensionphi}), via embedding $E\subset \mathbb{P}^2$, by the bijective map
$\tilde{\sigma}: \C^3 \leftrightarrow \C^3$ (acting as $\sigma $ in any coordinate).
Such a map pull-back to $L$ via uniformization in such a way that we have a commutative diagram  
\begin{center}
 \begin{tikzpicture}[description/.style={fill=white,inner sep=4pt},normal line/.style={->,font=\footnotesize,shorten >=4pt,shorten <=4pt}]
 \matrix (m) [matrix of math nodes, row sep=3em, column sep=3.5em, text height=1.5ex, text depth=0.25ex]
 { L & L \\ E & E \\ };
 \path[normal line] (m-1-1) edge node[left] {p} (m-2-1);
 \path[normal line] (m-1-2) edge node[right] {p} (m-2-2);
 \path[normal line] (m-1-1) edge node[above] {$\tilde{\sigma}$} (m-1-2);
 \path[normal line] (m-2-1) edge node[above] {$\sigma$} (m-2-2);
 \end{tikzpicture}
 \end{center}
Moreover, by \cite{Sil}, Theorem 2.2 (b), the multiplication map by any $a\in R$ is still defined over $\K \cdot \mL$  so $a\circ \sigma=\sigma \circ a$, $\forall a\in R$ and we may apply Lemma \ref{tildesigmacommutes}.
\par\noindent
First of all we observe that $\sigma $ extend to an automorphism    
$$\sigma: V(E) \rightarrow  V(E)$$ 
because, if $x=(x_a)_{a\in R}$ belongs to $V(E)$ then also 
$\sigma(x):=(\sigma(x_a))_{a\in R}$ stays in $V(E)$ since 
$$a\sigma(x_{ab})=\sigma(ax_{ab})=\sigma(x_b).$$   Furthermore, we have
$$ay\in K(L)=E[d] \Leftrightarrow ady=0  \Leftrightarrow\sigma(ady)=ad \sigma(y)=0 \Leftrightarrow a\sigma(y)\in K(L),$$
so (compare with \ref{IeJ})
$$I_x=I_{\sigma(x)}, \hskip2mm J_x=J_{\sigma(x)}, \hskip2mm\forall x\in V(E).$$ 
If $\phi\in \gc(a^*L)$ define $\phi^{\sigma}:=\tilde{\sigma}_a \circ \phi \circ \tilde{\sigma}_a^{-1}\in  \gc(a^*L)$.  
Fix
$\alpha=(x, (\phi_a)_{a\in J_x})\in \hat{\mathcal{G}}(L)$ and set
$\alpha^{\sigma}=(\sigma(x), (\phi_a^{\sigma})_{a\in J_x})$.  Then \ref{tildesigmacommutes} implies
$$\tilde{a}\circ \phi^{\sigma}_{ab}= \tilde{a}\circ \tilde{\sigma}_{ab}  \circ\phi_{ab}\circ\tilde{\sigma}_{ab}^{-1}=
 \tilde{\sigma}_{b}\circ \tilde{a} \circ\phi_{ab}\circ\tilde{\sigma}_{ab}^{-1}=
 \tilde{\sigma}_{b} \circ\phi_{b}\circ \tilde{a}\circ\tilde{\sigma}_{ab}^{-1}=
 \tilde{\sigma}_{b} \circ\phi_{b}\circ\tilde{\sigma}_{b}^{-1}\circ \tilde{a}=\phi^{\sigma}_{b}\circ \tilde{a}$$
 so we have  $\alpha^{\sigma}\in   \hat{\mathcal{G}}(L)$   and we  have the desired  extension  of $\sigma$ to an automorphism of $\hat{\mathcal{G}}(L)$
$$\sigma: \hat{\mathcal{G}}(L)\rightarrow \hat{\mathcal{G}}(L).$$
\par\noindent
In order to conclude the proof we have 
$$\tau (\sigma (x))=\sigma(y)i(\sigma(y))^{-1},\hskip2mm \forall y\in V(E) \hskip1mm \mid \hskip1mm2g(y)=x,$$ 
since $2g(\sigma(y))=\sigma(2g(y))= \sigma (x)$ and $\tau(\sigma(x))  $ does not depend on $y'$ s.t.~$2g(y')=\sigma(x)$.
Finally, we have  
$$\tau (\sigma (x))=\sigma(y)i(\sigma(y))^{-1}=\sigma (yi(y)^{-1})=\sigma(\tau(x)),$$
because $\sigma: \hat{\mathcal{G}}(L)\rightarrow \hat{\mathcal{G}}(L)$ is a morphism commuting with $i$.    
\end{proof}
  
What we are going to do now is  to study the behavior of the Adelic Heisenberg Group   under the action of any field automorphisms of $\C$.
Consider again an elliptic curve $E$  with complex multiplication by $R$, embedded  in  $\mathbb{P}^2= \mathbb{P}(\C^3)$ by means of the Weierstrass model and assume it is   defined over a field $\mL $.
Recall the \textit{Main Theorem of Complex Multiplication} (\cite{Sil} II, Theorem 8.2, see also \cite{Sh}, Ch.~5 and \cite{EF}, Ch.~10):
\begin{theorem}
\label{MTCM}
Let $\sigma \in \mathrm{Aut}(\mathbb{C})$ fixing $\K$ and let $s$ be an id{\`e}le of $\K$ corresponding to $\sigma$ via Artin map. Fix a complex analytic isomorphism:
$$f:\frac{\mathbb{C}}{\Lambda}\rightarrow E(\mathbb{C}),$$
where $\Lambda$ is a fractional ideal. Then there exists a unique complex analytic isomorphism:
$$g:\frac{\mathbb{C}}{s^{-1}\Lambda}\rightarrow E^{\sigma}(\mathbb{C}),$$ so that the following diagram commutes:
 \begin{center}
 \begin{tikzpicture}[description/.style={fill=white,inner sep=4pt},normal line/.style={->,font=\footnotesize,shorten >=4pt,shorten <=4pt}]
 \matrix (m) [matrix of math nodes, row sep=2.5em, column sep=2.5em, text height=1.5ex, text depth=0.25ex]
 {  \frac{\mathbb{\K}}{\Lambda} & \frac{\mathbb{\K}}{s^{-1}\Lambda} \\
    E(\mathbb{C}) & E^{\sigma}(\mathbb{C}) \\  };
 \path[normal line] (m-1-1) edge node[above] {$s^{-1}$} (m-1-2);
 \path[normal line] (m-2-1) edge node[above] {$\sigma$} (m-2-2);
 \path[normal line] (m-1-1) edge node[left] {$f$} (m-2-1);
 \path[normal line] (m-1-2) edge node[right] {$g$} (m-2-2);
 \end{tikzpicture}
 \end{center}
\end{theorem}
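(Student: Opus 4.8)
Since Theorem \ref{MTCM} is the classical Main Theorem of Complex Multiplication, the plan is to follow the route of \cite{Sil}, II, splitting the assertion into an (easy) uniqueness statement and a (deep) existence statement, the latter being equivalent to the reciprocity law governing the Galois action on torsion. Uniqueness is immediate: if $g$ and $g'$ both make the square commute, then $g\circ s^{-1}=\sigma\circ f=g'\circ s^{-1}$ on $\frac{\K}{\Lambda}$; since the id\`ele map $s^{-1}\colon\frac{\K}{\Lambda}\to\frac{\K}{s^{-1}\Lambda}$ is surjective onto the full torsion subgroup (compare with \eqref{homisomorphism} and Remark \ref{parametrization}), the two analytic isomorphisms agree on a dense subset of $E^{\sigma}(\C)$ and hence coincide.

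For existence, note first that $E^{\sigma}$ again has complex multiplication by $R$, since $\sigma$ fixes $\K\simeq\End(E)\otimes_{\mathbb{Z}}\Q$ and conjugates the endomorphisms of $E$ to those of $E^{\sigma}$. Because $\sigma$ fixes $\K\supset R$, it commutes with the $R$-action (carrying $[a]$ on $E$ to $[a]$ on $E^{\sigma}$), so the prescription $g:=\sigma\circ f\circ s$ on the torsion subgroup $\frac{\K}{s^{-1}\Lambda}$, which is what commutativity of the square forces, is $R$-linear. The existence of $g$ thus amounts to showing that this $R$-linear map of torsion groups is the restriction of an analytic isomorphism $\frac{\C}{s^{-1}\Lambda}\to E^{\sigma}(\C)$; equivalently, that $E^{\sigma}$ is analytically isomorphic to $\frac{\C}{s^{-1}\Lambda}$ in a way matching the torsion. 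The first main theorem of complex multiplication (the reciprocity law for the $j$-invariant) already guarantees the equality of ideal classes $[E^{\sigma}]=[s^{-1}\Lambda]$ in $Cl(R)$; what remains is the torsion-level compatibility.

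This torsion-level compatibility is the heart of the matter and the main obstacle. I would verify it one place at a time: for each non-archimedean place $v\in S$, the $\mathfrak{p}_{v}$-power torsion is governed by the reduction of $E$ at $v$ and by the Frobenius substitution, and the reduction--Frobenius relation identifies the $v$-component of the action of $\sigma$ with multiplication by the $v$-component of the id\`ele $s$; at the finitely many primes of bad reduction one substitutes the local theory of Lubin--Tate formal groups to reach the same conclusion. Passing to the limit over $\hat{R}\simeq\varprojlim_{I\in\ic}\frac{R}{I}$ assembles these local statements into the required global identity, and the snake-lemma argument of Remark \ref{parametrization} then shows that the induced map of lattices is an isomorphism, so that $g$ exists and the square commutes. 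The delicate technical point throughout is the normalization of the local reciprocity maps, so that the product of the local contributions reproduces exactly the global Artin symbol attached to $\sigma$; this is where the full class field theory of $\K$ is required, and for the complete treatment I would defer to \cite{Sil}, II, Theorem 8.2.
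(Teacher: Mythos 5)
The paper does not prove Theorem \ref{MTCM} at all: it is recalled verbatim as the classical Main Theorem of Complex Multiplication, with a citation to \cite{Sil} II, Theorem 8.2 (see also \cite{Sh}, Ch.~5 and \cite{EF}, Ch.~10), and no argument is given. Your proposal --- a correct, standard uniqueness argument via density of torsion, plus a sketch of existence that explicitly defers the reciprocity-on-torsion step to \cite{Sil} II, Theorem 8.2 --- therefore rests on exactly the same reference and is in effect the same approach as the paper's.
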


The main purpose of the rest of this section is to lift such a commutative diagram to adelic Heisenberg groups.
To the ease notations we put $E'=E^{\sigma}.$ We may assume  $E(\mathbb{C})$ and $E(\mathbb{C})'$ both embedded $\mathbb{P}^2(\C)= \mathbb{P}(\C^3)$ by means of  Weierstrass models and we define $L:=f^*\oc_{\mathbb{P}^2}(1)$, $L':=g^*\oc_{\mathbb{P}^2}(1)$. Like in Lemma \ref{extensionphi},   $\sigma: E\rightarrow E'$ is induced by  
$\tilde{\sigma}: (\C^3)^*\rightarrow (\C^3)^*$ pulling back to a map $L\rightarrow L'$ and providing a commutative diagram

\begin{center}
\begin{tikzpicture}[description/.style={fill=white,inner sep=4pt},normal line/.style={->,font=\footnotesize,shorten >=4pt,shorten <=4pt}]
 \matrix (m) [matrix of math nodes, row sep=3em, column sep=3.5em, text height=1.5ex, text depth=0.25ex]
 { L & L' \\ E & E' \\ };
 \path[normal line] (m-1-1) edge node[left] {p} (m-2-1);
 \path[normal line] (m-1-2) edge node[right] {q} (m-2-2);
 \path[normal line] (m-1-1) edge node[above] {$\tilde{\sigma}$} (m-1-2);
 \path[normal line] (m-2-1) edge node[above] {$\sigma$} (m-2-2);
\end{tikzpicture}
\end{center}

\begin{lemma}\label{3points}~
\begin{enumerate}\itemsep=3pt
\item $a^{\sigma}=a$, $\forall a\in R$;
\item if $x, y\in  E$  are s.t. $ay=x$ then   $a\sigma(y)=\sigma(x)$;
\item $\sigma$ extends to an isomorphism 
$$\sigma: V(E)\rightarrow V(E), \hskip1mm x=(x_a)_{a\in R}\rightarrow \sigma(x):=(\sigma(x_a))_{a\in R}=s^{-1}x.
$$
\item $$I_x=I_{\sigma(x)}, \hskip2mm J_x=J_{\sigma(x)}, \hskip2mm\forall x\in V(E).$$  
\end{enumerate}
\end{lemma}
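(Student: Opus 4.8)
The plan is to prove the four assertions in order, treating (1) as the arithmetic input that feeds the rest. For (1) I would invoke the \emph{normalized} complex multiplication structure of \cite{Sil} \S II, Proposition 1.1, which characterizes the isomorphism $R\simeq\End(E)$ by the relation $[a]^*\omega=a\,\omega$ on the invariant differential $\omega$. Applying $\sigma$ to this relation yields $([a]^\sigma)^*\omega^\sigma=\sigma(a)\,\omega^\sigma$, and since $\sigma$ fixes $\K$ and $a\in\K$ we have $\sigma(a)=a$; thus $[a]^\sigma$ is again the normalized multiplication by $a$ on $E^\sigma=E'$, which is precisely the statement $a^\sigma=a$ (this is \cite{Sil} \S II, Theorem 2.2(b)). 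Assertion (2) is then immediate from the functoriality of the Galois action on morphisms: from $[a]y=x$ I would deduce $\sigma(x)=\sigma([a]y)=[a]^\sigma\big(\sigma(y)\big)$, and rewriting $[a]^\sigma=[a]$ by (1) gives $a\,\sigma(y)=\sigma(x)$ on $E'$.

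For (3) I would first verify that the component-wise rule $x=(x_a)_a\mapsto(\sigma(x_a))_a$ is a well-defined bijection onto $V(E')$: the defining relations $\tfrac{a}{b}x_a=x_b$ (with $\tfrac{a}{b}\in R$) are preserved by (2), and $\sigma$ permutes torsion points bijectively. To identify this map with multiplication by the idèle $s^{-1}$, I would use the canonical identifications $V(E)\simeq\A\simeq V(E')$ coming from $T(E)\simeq\hat{\Lambda}$ and $\hat{\Lambda}\otimes\Q=\A$, under which multiplication by $s^{-1}$ carries $T(E)\simeq\hat{\Lambda}$ onto $T(E')\simeq\widehat{s^{-1}\Lambda}$ and induces on $E_{tor}=\tfrac{\A}{\hat{\Lambda}}$ exactly the map $\tfrac{\K}{\Lambda}\to\tfrac{\K}{s^{-1}\Lambda}$ of Theorem \ref{MTCM}. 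Thus $\sigma$ and $s^{-1}$ are two $\hat{R}$-linear maps $V(E)\to V(E')$ that agree on the quotient $E_{tor}$ by the Main Theorem of Complex Multiplication; their difference is an $\hat{R}$-linear map $V(E)=\A\to T(E')\simeq\hat{R}$, and such a map must vanish because $\hat{R}$ is torsion-free with $\bigcap_n n\hat{R}=0$. Hence $\sigma(x)=s^{-1}x$.

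Assertion (4) is a short torsion computation resting on (1)-(2). Since $E$ and $E'$ are both embedded by Weierstrass cubics, $\deg L=\deg L'=:d$, so $K(L)=E[d]$ and $K(L')=E'[d]$ by \eqref{K}. Then, using $\sigma(x)_1=\sigma(x_1)$ and the injectivity of $\sigma$,
$$a\in I_x\iff da\,x_1=0\iff \sigma(da\,x_1)=da\,\sigma(x_1)=0\iff a\in I_{\sigma(x)},$$
where $\sigma(da\,x_1)=da\,\sigma(x_1)$ holds by (1)-(2); this gives $I_x=I_{\sigma(x)}$. For the second identity I would either repeat the computation with $K(a^*L)=E[\,|a|^2d\,]$ (Remark \ref{a2}), or simply conjugate, invoking $J_x=\overline{I_x}$ from Lemma \ref{IeJ} to get $J_x=\overline{I_x}=\overline{I_{\sigma(x)}}=J_{\sigma(x)}$.

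The step I expect to be the genuine obstacle is (3). The component-wise definition is routine; the real content is the equality $\sigma(x)=s^{-1}x$, which amounts to promoting the Main Theorem of Complex Multiplication from torsion points to the entire adelic curve and fixing the identification $V(E')\simeq V(E)$ so that the Galois action becomes honest multiplication by an idèle. The linearity-and-torsion-freeness argument above is exactly what bridges the gap between agreement on $E_{tor}$ and agreement on all of $V(E)$.
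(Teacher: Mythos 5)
Your proposal is correct, and on parts (1), (2) and (4) it coincides with the paper's own proof: for (1) the paper likewise reduces to \cite{Sil} II, Theorem 2.2 via the normalizations $f$, $g$ of Theorem \ref{MTCM} (and it also records a second, even quicker argument, namely $a^{\sigma}(x')=\sigma\circ a\circ\sigma^{-1}(x')=\sigma(asx')=ax'$ for all $x'\in E_{tor}$, using that $\sigma$ acts on torsion as multiplication by $s^{-1}$); (2) is the same one-line functoriality deduction; and (4) is the same computation $ax_1\in K(L)=E[d]\Leftrightarrow adx_1=0\Leftrightarrow ad\sigma(x_1)=0\Leftrightarrow a\sigma(x_1)\in K(L')$ followed by conjugation through Lemma \ref{IeJ}. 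The only genuine divergence is in (3). The paper's proof is a one-liner resting on the observation that \emph{every} component $x_a$ of a point $x\in V(E)$ is itself a torsion point (since $ax_a=x_1\in E_{tor}$), so Theorem \ref{MTCM} applies to each component separately and gives $\sigma(x_a)=s^{-1}x_a$ outright; no passage from torsion points to the full adelic curve is required. You instead invoke Theorem \ref{MTCM} only to show that $\sigma$ and $s^{-1}$ agree after projection to $E'_{tor}$, so that their difference is an additive map $V(E)\to T(E')$, which you then kill using divisibility of $V(E)\simeq\A$ against $\bigcap_n n\hat{R}=0$. That argument is sound (additivity of the difference is all that is needed, so your stronger and unargued claim of $\hat{R}$-linearity is harmless), and it buys something: it isolates precisely what must be verified about the idèle action, namely that $s^{-1}\hat{\Lambda}=\widehat{s^{-1}\Lambda}$ and that the induced map on $E_{tor}$ is the one in Theorem \ref{MTCM}, and it shows that agreement on torsion alone forces adelic agreement, a remark of independent interest. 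What the paper's route buys is brevity: the ``promotion from torsion points to the adelic curve'' that you identify as the main obstacle is in fact vacuous, because adelic points have all components torsion --- the one observation your write-up overlooks.
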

\begin{proof}
(1) First proof: since $E$ and $E'$ are normalized by the maps $f$ and $g$ of Theorem \ref{MTCM}, the statement follows directly  from \cite{Sil} II, Theorem 2.2.

Second proof: even more directly, Theorem \ref{MTCM} implies
$$ a^{\sigma}(x')=\sigma \circ a \circ \sigma ^{-1}(x')=\sigma(asx')= ax', \hskip2mm \forall x'\in E_{tor}.
$$ 
(2) $ ay=x$ $\Rightarrow$ $\sigma(ay)=\sigma(x)$ $\Rightarrow$  $a\sigma(y)=\sigma(x)$ thanks to (1).
\par\noindent
(3) It follows just combining (2) with Theorem \ref{MTCM} since $\sigma$ acts as the multiplication by $s^{-1}$ on each component $x_a$.
\par\noindent(4) We get
$$ax_1\in K(L)=E[d] \Leftrightarrow adx_1=0  \Leftrightarrow\sigma(adx_1)=ad \sigma(x_1)=0 \Leftrightarrow a\sigma(x_1)\in K(L'),$$
hence $I_x=I_{\sigma(x)}$, $\forall x\in V(E)$ and we are done by Lemma \ref{IeJ}.
\end{proof}

\begin{theorem}
\label{AMTCM}
The  isomorphism: $s^{-1}:V(E)\rightarrow V(E')$ lifts to an isomorphism of adelic Heisenberg groups 
$$ \sigma: \hat{\mathcal{G}}(L)\rightarrow\hat{\mathcal{G}}(L')
$$
commuting with the sections $\tau: V(E) \rightarrow \hat{\mathcal{G}}(L)$ and $\tau': V(E') \rightarrow \hat{\mathcal{G}}(L')$:
$$ \sigma(\tau(x))=\tau'(\sigma(x)),  \hskip5mm \forall x\in V(E).$$
\end{theorem}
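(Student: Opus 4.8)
The plan is to run the argument of Theorem~\ref{sigmataucommute} verbatim in the present two-curve setting, the only genuinely new input being the compatibility of the symmetry involutions on $\hat{\mathcal G}(L)$ and $\hat{\mathcal G}(L')$. By Lemma~\ref{3points}(1)--(2) the map $\sigma\colon E\to E'$ commutes with multiplication by every $a\in R$, so Lemma~\ref{extensionsigma} applies and furnishes, for each $a\in R$, a unique bijection $\tilde\sigma_a\colon a^*L\to a^*L'$ with $q_a\circ\tilde\sigma_a=\sigma\circ p_a$, $\tilde a\circ\tilde\sigma_a=\tilde\sigma\circ\tilde a$, and the crucial commutation \eqref{tildesigmacommutes}, namely $\tilde a\circ\tilde\sigma_{ab}=\tilde\sigma_b\circ\tilde a$.

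Next I would define the lift on automorphisms and check that it is a group isomorphism covering $s^{-1}$. For $\phi\in\gc(a^*L)$ I put $\phi^\sigma:=\tilde\sigma_a\circ\phi\circ\tilde\sigma_a^{-1}$; using $q_a\circ\tilde\sigma_a=\sigma\circ p_a$ together with $\sigma\circ t_{x_a}=t_{\sigma(x_a)}\circ\sigma$ one checks $\phi^\sigma\in\gc(a^*L')_{\sigma(x_a)}$. For $\alpha=(x,(\phi_a)_{a\in J_x})\in\hat{\mathcal G}(L)$ I set $\sigma(\alpha):=(\sigma(x),(\phi_a^\sigma)_{a\in J_{\sigma(x)}})$, which is meaningful because $J_x=J_{\sigma(x)}$ by Lemma~\ref{3points}(4). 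Coherence of $(\phi_a^\sigma)$ over $\sigma(x)$ follows from the identical chain of equalities used in Theorem~\ref{sigmataucommute},
$$\tilde a\circ\phi^\sigma_{ab}=\tilde\sigma_b\circ\tilde a\circ\phi_{ab}\circ\tilde\sigma_{ab}^{-1}=\tilde\sigma_b\circ\phi_b\circ\tilde a\circ\tilde\sigma_{ab}^{-1}=\tilde\sigma_b\circ\phi_b\circ\tilde\sigma_b^{-1}\circ\tilde a=\phi^\sigma_b\circ\tilde a,$$
where the outer equalities unwind the definition of $\phi^\sigma$ and use \eqref{tildesigmacommutes}, while the middle one is the coherence of $(\phi_a)$. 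Each $\tilde\sigma_a$ being bijective and conjugation preserving composition (cf.~Lemma~\ref{extensionphi}), the map $\sigma\colon\hat{\mathcal G}(L)\to\hat{\mathcal G}(L')$ is a group isomorphism; and $g'(\sigma(\alpha))=\sigma(x)=s^{-1}g(\alpha)$ by Lemma~\ref{3points}(3), so it lifts $s^{-1}$.

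It then remains to verify $\sigma\circ\tau=\tau'\circ\sigma$. Choose $y\in\hat{\mathcal G}(L)$ with $2g(y)=x$; since $\sigma$ is an additive lift, $2g'(\sigma(y))=\sigma(2g(y))=\sigma(x)$, so by the well-definedness in Definition~\ref{tau} I may compute $\tau'(\sigma(x))=\sigma(y)\,i'(\sigma(y))^{-1}$, whereas $\sigma(\tau(x))=\sigma\big(y\,i(y)^{-1}\big)=\sigma(y)\,\sigma(i(y))^{-1}$. Hence the whole statement collapses to the single identity $i'\circ\sigma=\sigma\circ i$ between the two symmetry involutions.

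This last compatibility is the main obstacle, and it is exactly where the Weierstrass model enters. The symmetric structure underlying $i$ is the isomorphism $(-1)^*L\cong L$ coming from the inversion on $E$, which in $\mathbb P^2$ is a projective-linear involution $\iota$ whose matrix has entries in $\mL$; the corresponding involution for $E'=E^\sigma$ is $\iota'=\sigma(\iota)$. Since $\tilde\sigma$ acts by applying $\sigma$ to the homogeneous coordinates, for every vector $v$ one has $\tilde\sigma(\iota v)=\sigma(\iota)\,\tilde\sigma(v)=\iota'\,\tilde\sigma(v)$, i.e.~$\tilde\sigma\circ\iota=\iota'\circ\tilde\sigma$, and likewise at each level $a$ compatibly with \eqref{tildesigmacommutes}. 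Passing to the induced automorphisms of $L$, $L'$ and then, through the canonical representation of Lemma~\ref{extensionphi}, to $\gc(L)$, $\gc(L')$ and their adelic versions, this intertwining becomes precisely $i'\circ\sigma=\sigma\circ i$; equivalently it reflects Lemma~\ref{3points}(1), $(-1)^\sigma=-1$, and the naturality of the involution in the CM element $-1$. With $i'\circ\sigma=\sigma\circ i$ in hand, the computation above yields $\sigma(\tau(x))=\tau'(\sigma(x))$ for all $x\in V(E)$, completing the proof.
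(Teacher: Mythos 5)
Your argument follows the paper's proof essentially verbatim: Lemma \ref{3points} is used to invoke Lemmas \ref{extensionsigma} and \ref{extensionphi}, the lift $\alpha\mapsto(\sigma(x),(\phi_a^\sigma)_{a\in J_x})$ and its coherence check are the identical chain of equalities, and the final step reduces, exactly as in the paper, to the commutation of $\sigma$ with the symmetry involutions. If anything you are more thorough than the paper on that last point: where the paper simply asserts that $\sigma$ is ``a morphism commuting with $i$'', you justify $i'\circ\sigma=\sigma\circ i$ via the projective-linearity and $\mL$-rationality of the inversion involution in the Weierstrass model, which is a legitimate filling-in of a detail the paper leaves implicit.
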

\begin{proof}
Combining Lemmas \ref{extensionsigma}, \ref{extensionphi} and \ref{3points} the proof is very similar to that of Theorem  \ref{sigmataucommute}.   

By Lemma \ref{3points}, (1) we can apply Lemma \ref{extensionsigma} to $L$ and $L'$ so
 for any $a\in R$ there exists a unique bijective $\tilde{\sigma}_a: a^*L\rightarrow a^*L'$ satisfying 
$q_a\circ \tilde{\sigma}_a=\sigma \circ p_a$ and $\tilde{a}\circ \tilde{\sigma}_a =\tilde{\sigma}\circ \tilde{a}$   
and
\begin{equation}\label{AMTCMformula}
\tilde{a}\circ \tilde{\sigma}_{ab}=\tilde{\sigma}_b\circ \tilde{a}: (ab)^*L\rightarrow b^*L', \hskip2mm  \forall a,b \in R.  
\end{equation}

Furthermore, Lemma \ref{extensionphi}  implies that for any $a\in R$ we have an isomorphism
$$\gc(a^*L)\leftrightarrow\gc(a^*L) \hskip2mm \phi \leftrightarrow  \phi^{\sigma}:=\tilde{\sigma}_a \circ \phi \circ \tilde{\sigma}_a^{-1}.$$
Since $I_x=I_{\sigma(x)}$, $\forall x\in V(E)$ (compare with Lemma \ref{3points}, (4)),
for any
$\alpha=(x, (\phi_a)_{a\in J_x})\in \hat{\mathcal{G}}(L)$  we define
$\alpha^{\sigma}=(\sigma(x), (\phi_a^{\sigma})_{a\in J_x})$.  Then \ref{AMTCMformula} implies
$$\tilde{a}\circ \phi^{\sigma}_{ab}= \tilde{a}\circ \tilde{\sigma}_{ab}  \circ\phi_{ab}\circ\tilde{\sigma}_{ab}^{-1}=
 \tilde{\sigma}_{b}\circ \tilde{a} \circ\phi_{ab}\circ\tilde{\sigma}_{ab}^{-1}=
 \tilde{\sigma}_{b} \circ\phi_{b}\circ \tilde{a}\circ\tilde{\sigma}_{ab}^{-1}=
 \tilde{\sigma}_{b} \circ\phi_{b}\circ\tilde{\sigma}_{b}^{-1}\circ \tilde{a}=\phi^{\sigma}_{b}\circ \tilde{a}$$
 so we have  $\alpha^{\sigma}\in   \hat{\mathcal{G}}(L)$   and we  have the desired  extension  of $\sigma$ to an automorphism of $\hat{\mathcal{G}}(L)$
$$\sigma: \hat{\mathcal{G}}(L)\rightarrow \hat{\mathcal{G}}(L).$$
\par\noindent
In order to conclude the proof we have 
$$\tau' (\sigma (x))=\sigma(y)i(\sigma(y))^{-1},\hskip2mm \forall y\in V(E) \hskip1mm \mid \hskip1mm2g(y)=x,$$ 
since $2g(\sigma(y))=\sigma(2g(y))= \sigma (x)$ and $\tau(\sigma(x))  $ does not depend on $y'$ s.t.~$2g(y')=\sigma(x)$.
Finally, we have  
$$\tau' (\sigma (x))=\sigma(y)i(\sigma(y))^{-1}=\sigma (yi(y)^{-1})=\sigma(\tau(x)),$$
because $\sigma: \hat{\mathcal{G}}(L)\rightarrow \hat{\mathcal{G}}(L)$ is a morphism commuting with $i$. 
\end{proof}

\section{Adelic Thetas}

What we are going to do in this section is to show that \textit{canonical representations}   $\tilde{\rho}_a$ defined in \ref{bhat}, fit for different $a\in R$ to give a representation $U$ (Proposition \ref{morphism})   of the adelic Heisenberg group $ \hat{\mathcal{G}}(L)$ into the direct limit (Definition \ref{U})
$$\hat{H}^0(L)\simeq \varinjlim_{a\in R}H^0(a^*L).$$
This allows us to define \textit{adelic theta functions} $\theta_s$, for any $s\in \hat{H}^0(L)$ (Definition \ref{thetafctn}),  on $V(E)$ by means of the lifting
$\tau: V(E) \to \hat{\mathcal{G}}(L)$ defined in \ref{tau} (in all this section the line bundle $L$ is assumed to be symmetric).
We obtain in such a way (Proposition \ref{propertiestheta}) a vector space of functions over which $ \hat{\mathcal{G}}(L)$ is represented by means of \textit{translations and characters} (as it usually happens in canonical representations). 

Combining all that with the properties of the adelic action on $V(E)$ stemming from the main theorem of 
complex multiplication for elliptic curves (proved in \S 5) we find  a nice intertwining between theta functions and $\C$-automorphisms (Theorems \ref{main} and \ref{main'}).

Last but not least, composing theta functions with the embeddings defined in Notations \ref{mapslattices}, we are going to define theta functions exhibiting a nice behavior
 under $\C$-automorphisms 
on commensurability classes of arithmetic $1$-dimensional $\mathbb{K}$-lattices (Theorem \ref{last})
and on the groupoid of commensurability modulo dilations (Notations \ref{lastnotations}, Theorem \ref{last'}).

\begin{definition}\label{bhat}
By an abuse of notations, for any $\psi \in \mathcal{G}(a^*L)_x$ we still denote by $\psi$ its image via \textit{canonical representation} (\cite{CAV}, 6.4):
$$\tilde{\rho}_a: \mathcal{G}(a^*L)\rightarrow GL(H^0(a^*L)),\hskip2mm \psi=\tilde{\rho}_a(\psi): s\rightarrow \psi\circ s \circ t_{-x}.$$

If $s\in H^0(a^*L)$ and $b\in R$ then $b\circ id=b=id\circ b=p_a \circ s \circ b$ so, by  universal property, there exists   $\hat{b}(s)\in H^0((ab)^*L)$ s.t.
$\tilde{b} ( \hat{b}(s))=s\circ b$. We find a linear  map
$$\hat{b}: H^0(a^*L)\hookrightarrow H^0((ab)^*L).$$   
\end{definition}
\bigskip

\begin{lemma}
\label{commuting}
Consider $\psi \in \mathcal{G}((ab)^*L)$ and $\phi \in \mathcal{G}(a^*L)$ s.t.~$\tilde{b}\circ \psi=\phi\circ \tilde{b}$.
Then we have a commutative square:
 \begin{center}
 \begin{tikzpicture}[description/.style={fill=white,inner sep=4pt},normal line/.style={->,font=\footnotesize,shorten >=4pt,shorten <=4pt}]
 \matrix (m) [matrix of math nodes, row sep=2.5em, column sep=2.5em, text height=1.5ex, text depth=0.25ex]
 {  H^0(a^*L) & H^0((ab)^*L) \\
    H^0(a^*L) & H^0((ab)^*L) \\  };
 \path[normal line] (m-1-1) edge node[above] {$\hat{b}$} (m-1-2);
 \path[normal line] (m-2-1) edge node[above] {$\hat{b}$} (m-2-2);
 \path[normal line] (m-1-1) edge node[left] {$\phi$} (m-2-1);
 \path[normal line] (m-1-2) edge node[right] {$\psi$} (m-2-2);
 \end{tikzpicture}
 \end{center}
\end{lemma}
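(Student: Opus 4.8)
The plan is to reduce the asserted commutativity to an equality of two sections of $(ab)^*L$, and then to verify that equality after composing with $\tilde{b}$, exploiting the universal property of the pull-back square that defines $\tilde{b}:(ab)^*L\to a^*L$ (so that $p_a\circ\tilde{b}=b\circ p_{ab}$). First I would pin down the base points. Writing $\phi\in\mathcal{G}(a^*L)_x$ and $\psi\in\mathcal{G}((ab)^*L)_y$, I would compose the hypothesis $\tilde{b}\circ\psi=\phi\circ\tilde{b}$ with $p_a$ and use $p_a\circ\tilde{b}=b\circ p_{ab}$ together with the surjectivity of $p_{ab}$ to obtain $b\circ t_y=t_x\circ b$, hence $by=x$. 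This is precisely the covering situation of Lemma \ref{covering} with the role of $a$ played by $b$, so the two base points are compatible and the hypotheses are consistent.

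Next I would record the reduction principle: any section $t$ of $(ab)^*L$ is uniquely determined by its composite $\tilde{b}\circ t\colon E\to a^*L$. Indeed $t$ satisfies $p_{ab}\circ t=\mathrm{id}$, and by the universal property of the pull-back a map $E\to(ab)^*L$ whose projection to $E$ is prescribed to be $\mathrm{id}$ is determined by the induced map to $a^*L$ (which must then satisfy $p_a\circ(\tilde{b}\circ t)=b$). Consequently, to prove $\psi(\hat{b}(s))=\hat{b}(\phi(s))$ for every $s\in H^0(a^*L)$ it suffices to check that the two sides agree after applying $\tilde{b}$. Unwinding the canonical representation of Definition \ref{bhat}, one has $\psi(\hat{b}(s))=\psi\circ\hat{b}(s)\circ t_{-y}$, and therefore
$$\tilde{b}\circ\psi(\hat{b}(s))=(\tilde{b}\circ\psi)\circ\hat{b}(s)\circ t_{-y}=\phi\circ(\tilde{b}\circ\hat{b}(s))\circ t_{-y}=\phi\circ s\circ b\circ t_{-y},$$
where the middle equality is the hypothesis and the last is the defining relation $\tilde{b}\circ\hat{b}(s)=s\circ b$. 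Using $b\circ t_{-y}=t_{-x}\circ b$ this becomes $\phi\circ s\circ t_{-x}\circ b=\phi(s)\circ b$, while the defining relation for $\hat{b}$ applied to $\phi(s)$ gives directly $\tilde{b}\circ\hat{b}(\phi(s))=\phi(s)\circ b$. The two composites with $\tilde{b}$ coincide, and the uniqueness above forces $\psi(\hat{b}(s))=\hat{b}(\phi(s))$.

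The only genuinely delicate point is the translation identity $b\circ t_{-y}=t_{-x}\circ b$: this is where the compatibility $by=x$ of the two base points enters, and it is what makes the precomposition by $t_{-y}$ on the $(ab)^*L$ side match the precomposition by $t_{-x}$ on the $a^*L$ side. It holds because multiplication by $b$ is a group endomorphism of $E$, so that $b(z-y)=bz-by=bz-x$. Everything else is bookkeeping with the universal property of the pull-back and the definitions of $\hat{b}$ and of the canonical representation.
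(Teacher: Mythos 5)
Your proof is correct and follows essentially the same route as the paper's: reduce to the characterization $\tilde{b}\circ\hat{b}(t)=t\circ b$ of $\hat{b}$, so that it suffices to check $\tilde{b}\circ\psi(\hat{b}(s))=\phi(s)\circ b$, and then compute using the covering hypothesis, the defining relation of $\hat{b}$, and the translation identity $b\circ t_{-y}=t_{-by}\circ b$. If anything, your version is tidier than the paper's, since you explicitly derive the base-point compatibility $by=x$ from the hypothesis and state the uniqueness principle for sections of the pull-back, whereas the paper leaves these implicit (and its displayed translation subscripts are garbled).
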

\begin{proof}
For any $s \in H^0(a^*L)$, $\hat{b}(s)\in H^0((ab)^*L)$ is characterized by $\tilde{b}\circ \hat{b} (s)=s\circ b$ so we are left to prove that
$\tilde{b}\circ \psi (\hat{b}(s))=\phi(s) \circ b$, $\forall s\in   H^0(a^*L)$. We have
\begin{multline*}
\tilde{b}\circ \psi (\hat{b}(s))=\tilde{b}\circ \psi \circ \hat{b}(s)\circ t_{-bx}=\phi\circ \tilde{b}\circ \hat{b}(s)\circ t_{-x} \\
=\phi\circ s\circ b\circ t_{-x}=\phi\circ s\circ  t_{-bx}\circ b= \phi(s) \circ b. \qedhere
\end{multline*}
\end{proof}

\begin{definition}\label{U}~
\begin{enumerate}\itemsep=3pt
\item Set 
$$\hat{H}^0(L)\simeq \varinjlim_{a\in R}H^0(a^*L), \hskip2mm \iota_a:H^0(a^*L)\rightarrow \hat{H}^0(L),$$ with $\iota_a$ denoting the canonical inclusion.
For any $\alpha =(x,  (\phi_a)_{a\in J_{x}})\in\hat{\mathcal{G}}(L)$, define
$$U_{\alpha}: \hat{H}^0(L)\rightarrow \hat{H}^0(L), \hskip5mm U_{\alpha}\mid _{H^0(a^*L)}=\tilde{\rho}_a(\phi _a), \hskip1mm \forall a\in J_x.$$
Such a   $U_{\alpha}$ is well defined by Lemma \ref{commuting}.
\item For any $a\in R$ we denote by $GL_a$ the group
$$ GL_a:=\{ (\phi_r)_{r\in (a)} \mid \hskip1mm \phi_b \in GL(H^0(b^*L)), \hskip1mm \phi_{ab}\circ \hat{c}=\hat{c} \circ \phi_{abc} \hskip1mm \forall b, c\in R\}.
$$
\item If $a \mid b$ then $(b)\subset (a)$ and there an obvious injective group morphism $GL_a\hookrightarrow GL_b.$ So   can define the limit   group
$$GL(\hat{H}^0(L)):=\varinjlim_{a\in R} GL_a.$$
\end{enumerate}
\end{definition}

\begin{remark}
Observe that Lemma \ref{commuting} implies that $U_{\alpha}$ defined in  \ref{U}, (1) belongs to $GL(\hat{H}^0(L)).$
The following Proposition shows that the correspondence  $\alpha \rightarrow U_{\alpha}$ is indeed a representation of $\hat{\mathcal{G}}(L)$ in 
$GL(\hat{H}^0(L)).$
\end{remark}
   .
\begin{proposition}
\label{morphism} Let $L$ be very ample, choose a section $s\in H^0(L)$ and assume everything defined over some field $\mL$.
The map defined in  \ref{U}, (1)
$$U: \hat{\mathcal{G}}(L)\rightarrow  GL(\hat{H}^0(L)), \hskip3mm \alpha \rightarrow U_{\alpha}$$ is a group morphism.  Furthermore, we have:
$$U_{\tau(x)}\circ U_{\tau(y)}(s)=\tilde{e}(\tfrac{x}{2},y)U_{\tau (x+y)}(s).$$  
\end{proposition}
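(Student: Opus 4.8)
The plan is to deduce everything from the fact that each \emph{canonical representation} $\tilde{\rho}_a$ is a group homomorphism and that these homomorphisms are compatible with the transition maps $\hat{b}$ of the direct system. The morphism property of $U$ then reduces to a component-wise verification on a cofinal subsystem, and the final displayed formula follows at once by combining that morphism property with Proposition \ref{notmorphism}. Very ampleness of $L$ enters only to guarantee that the pullbacks $a^*L$ remain ample, so that the spaces $H^0(a^*L)$ are nonzero and the canonical representations are the standard ones; the chosen section $s$ is immaterial, since the identity will hold at the level of operators on $\hat{H}^0(L)$.

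First I would make explicit the group law on $\hat{\mathcal{G}}(L)$: given $\alpha=(x,(\phi_a)_{a\in J_x})$ and $\beta=(y,(\psi_a)_{a\in J_y})$, their product is $\alpha\beta=(x+y,(\phi_a\circ\psi_a)_{a})$, where $\phi_a\circ\psi_a\in\mathcal{G}(a^*L)_{x_a+y_a}$. Here I would check the index bookkeeping: since each $K(a^*L)$ is a subgroup of $E_{tor}$, if $x_a,y_a\in K(a^*L)$ then $x_a+y_a\in K(a^*L)$, so that $J_x\cap J_y\subseteq J_{x+y}$. Moreover, by Lemma \ref{IeJ} the conjugate $\overline{J_x}=\tfrac{1}{kd}\Lambda\cap R$ contains every sufficiently divisible element of $R$, hence $J_x\cap J_y$ is cofinal in $(R,\mid)$ and the product is defined on a cofinal subsystem; by Remark \ref{isomorphism} a coherent system is pinned down by any single component, so this determines $\alpha\beta$ uniquely.

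Next I would record that $\tilde{\rho}_a\colon\mathcal{G}(a^*L)\to GL(H^0(a^*L))$ is a group homomorphism, which is the defining property of the canonical representation (\cite{CAV}, \S 6.4); concretely, for $\psi_1$ over $x_1$ and $\psi_2$ over $x_2$ one has $\tilde{\rho}_a(\psi_1)\circ\tilde{\rho}_a(\psi_2)(s)=\psi_1\circ\psi_2\circ s\circ t_{-(x_1+x_2)}=\tilde{\rho}_a(\psi_1\circ\psi_2)(s)$, using that the translations add. Restricting to any $a\in J_x\cap J_y$ this gives $U_{\alpha\beta}\mid_{H^0(a^*L)}=\tilde{\rho}_a(\phi_a\circ\psi_a)=\tilde{\rho}_a(\phi_a)\circ\tilde{\rho}_a(\psi_a)=U_\alpha\mid_{H^0(a^*L)}\circ\,U_\beta\mid_{H^0(a^*L)}$. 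Since this holds on the cofinal family $\{H^0(a^*L)\}_{a\in J_x\cap J_y}$ and the operators are compatible with the transition maps $\hat{b}$ by Lemma \ref{commuting}, they agree on the whole direct limit $\hat{H}^0(L)$, proving $U_{\alpha\beta}=U_\alpha\circ U_\beta$.

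Finally, for the displayed formula I would use the morphism property together with Proposition \ref{notmorphism}. Writing $c:=\tilde{e}^L(\tfrac{x}{2},y)\in\mathbb{C}^*$, that proposition gives $\tau(x)\tau(y)=c\,\tau(x+y)$ in $\hat{\mathcal{G}}(L)$, where $c$ lies in the central $\mathbb{C}^*$ of the exact sequence of Theorem \ref{exactseq}. Since on each $H^0(a^*L)$ the canonical representation sends a central scalar to multiplication by that scalar, one has $U_c=c\cdot\mathrm{id}$ on $\hat{H}^0(L)$; hence $U_{\tau(x)}\circ U_{\tau(y)}=U_{\tau(x)\tau(y)}=U_{c\,\tau(x+y)}=c\,U_{\tau(x+y)}$, which applied to $s$ is exactly the asserted identity. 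I expect the only genuinely delicate point to be the well-definedness of $U_{\alpha\beta}$ on the direct limit, i.e.\ checking that the index sets $J_x$, $J_y$, $J_{x+y}$ interact cofinally so that a single coherent system is recovered, while the homomorphism and central-scalar computations are formal once $\tilde{\rho}_a$ is known to be a representation.
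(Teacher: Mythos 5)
Your proposal is correct and takes essentially the same approach as the paper: both arguments fix a common index $a\in J_x\cap J_y$, reduce the morphism property of $U$ to the fact that the canonical representation $\tilde{\rho}_a$ is a group homomorphism (with Lemma \ref{commuting} ensuring compatibility with the direct-limit structure), and deduce the displayed formula from Proposition \ref{notmorphism}. The only difference is packaging: the paper routes the reduction through the isomorphism $\hat{\mathcal{G}}(L)\mid_{\vc_a}\simeq \nu_a^*\gc(a^*L)$ of Theorem \ref{exactseq}\,(2) and factorization through $GL_a$, whereas you verify the same thing component-wise, spelling out the group law, the cofinality of $J_x\cap J_y$, and the central-scalar step that the paper leaves implicit.
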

\begin{proof}
Fix $\alpha=(x,(\phi_a)_{a\in J_x})\in \hat{\mathcal{G}}(L)$ and $\beta=(x,(\phi_a)_{a\in J_y})\in \hat{\mathcal{G}}(L)$. 
If $a\in J_x \cap J_y$, then $\alpha $ and $\beta $ belong to $\hat{\mathcal{G}}(L)\mid_{\vc_a}$ (compare with Definition \ref{AHG}, (3)).
What we are going to prove is that 
$$U:\hat{\mathcal{G}}(L)\mid_{\vc_a} \rightarrow GL(\hat{H}^0(L))$$ is a group morphism.   
Theorem \ref{exactseq}, (2) implies
$$\hat{\mathcal{G}}(L)\mid_{\vc_a}\simeq \nu_a^*\gc(a^*L).$$
Moreover, $U:\nu_a^*\gc(a^*L) \rightarrow GL(\hat{H}^0(L))$   obviously factorizes through $GL_a$ 
$$U: \nu_a^*\gc(a^*L)\rightarrow GL_a \subset GL(\hat{H}^0(L))  
$$
and we have a commutative diagram   (compare with Definition \ref{bhat}):
 \begin{center}
 \begin{tikzpicture}[description/.style={fill=white,inner sep=4pt},normal line/.style={->,font=\footnotesize,shorten >=4pt,shorten <=4pt}]
 \matrix (m) [matrix of math nodes, row sep=2.5em, column sep=2.5em, text height=1.5ex, text depth=0.25ex]
 {  \nu_a^*\gc(a^*L) & GL_a \\
    \mathcal{G}(a^*L) & GL(H^0(a^*L)) \\  };
 \path[normal line] (m-1-1) edge node[above] {$U$} (m-1-2);
 \path[normal line] (m-2-1) edge node[above] {$\rho_a$} (m-2-2);
 \path[normal line] (m-1-1) edge node[left] {} (m-2-1);
 \path[normal line] (m-1-2) edge (m-2-2);
 \end{tikzpicture}
 \end{center}
i.e. the map $U\mid_{\hat{\mathcal{G}}(L)\mid_{\vc_a}}$ factorizes  through $\rho_a$ and must be a morphism since $\rho_a$ it is. Finally, 
$$U_{\tau(x)}\circ U_{\tau(y)}(s)=\tilde{e}(\tfrac{x}{2},y)U_{\tau (x+y)}(s)$$
follows from Proposition \ref{notmorphism}.
\end{proof}

Recall \cite{Theta3}, Definition 5.5: 

\begin{definition}
\label{thetafctn} Fix $x\in V(E)$, $s\in \iota_b(H^0(b^*L)) \subset \hat{H}^0(L)$ and assume $l\in L(0)^*$, also defined on $\mathbb{L}$.  
We define the \textit{adelic theta function associated to}  $s$:
$$\theta _s: V(E) \rightarrow \overline{\mathbb{L}},$$
in such a way that
$$\theta _s(x)=l\big(\phi_{ab}^{-1}\big(\hat{a}\big( s\big)\big(x_{ab})\big), \hskip1mm \forall a \mid ab\in  J_x,$$
if $\tau(x)=\big(x,  (\phi_c)_{c\in J_{x}}\big)$.
\end{definition}
Such a theta function is well defined in view of the following:

\begin{lemma}
If both $ ab$ and $ cb$ belong to $  J_x$ then 
$$\phi_{ab}^{-1}(\hat{a}(s)(x_{ab}))=\phi_{cb}^{-1}(\hat{c}(s)(x_{cb}))=\phi_{acb}^{-1}(\hat{ac}(s)(x_{acb})).$$
\end{lemma}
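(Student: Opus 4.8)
The plan is to reduce the two asserted equalities to a single bundle-level transitivity relation and then run a short diagram chase. Since $ab,cb\in J_x$ and $J_x=\overline{I_x}$ is an ideal of $R$ by Lemma \ref{IeJ}, the product $acb$ again lies in $J_x$, so $\phi_{acb}$ is a genuine member of the coherent system $\tau(x)=(x,(\phi_c)_{c\in J_x})$. The outer terms are moreover interchanged by swapping $a$ and $c$ (note $acb=cab$ and $\hat{ac}=\hat{ca}$), so it suffices to prove
$$\phi_{ab}^{-1}\big(\hat a(s)(x_{ab})\big)=\phi_{acb}^{-1}\big(\hat{ac}(s)(x_{acb})\big)$$
and then to invoke the same argument with $a$ and $c$ exchanged for the equality involving $\phi_{cb}^{-1}(\hat c(s)(x_{cb}))$.

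I would first record two elementary facts. Writing $t:=\hat a(s)\in H^0((ab)^*L)$, the characterisation $\tilde m(\hat m(u))=u\circ m$ of the operators of Definition \ref{bhat}, together with the uniqueness in the underlying universal property, gives the transitivity $\hat{ac}(s)=\hat c(\hat a(s))=\hat c(t)$. Second, the coherence of $\tau(x)$ says that $\phi_{acb}=\phi_{(ab)c}$ covers $\phi_{ab}$ over $t_{x_{acb}}$, that is $\tilde c\circ\phi_{acb}=\phi_{ab}\circ\tilde c$ as bundle maps $(acb)^*L\to(ab)^*L$; inverting the two automorphisms yields $\tilde c\circ\phi_{acb}^{-1}=\phi_{ab}^{-1}\circ\tilde c$.

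The core step chains these together. Evaluating the section $\hat c(t)$ at $x_{acb}$ and using $\tilde c\circ\hat c(t)=t\circ c$ with the compatibility $c\cdot x_{acb}=x_{ab}$ defining $V(E)$ gives $\tilde c\big(\hat c(t)(x_{acb})\big)=t(x_{ab})=\hat a(s)(x_{ab})$, whence
$$\tilde c\Big(\phi_{acb}^{-1}\big(\hat{ac}(s)(x_{acb})\big)\Big)=\phi_{ab}^{-1}\Big(\tilde c\big(\hat c(t)(x_{acb})\big)\Big)=\phi_{ab}^{-1}\big(\hat a(s)(x_{ab})\big).$$
The two sides of the claimed equality lie respectively in the fibres $((acb)^*L)_0$ and $((ab)^*L)_0$, and the canonical identification of each with $L_0$ (the one implicitly used when $l\in L(0)^*$ is applied in Definition \ref{thetafctn}) is induced by the corresponding bundle projection; since the projection for $acb$ factors through $\tilde c$, the displayed identity says exactly that the two expressions represent the same element of $L_0$.

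The only genuinely delicate point is the bookkeeping of line bundles: each of the three expressions a priori lives in a different bundle, and the asserted equalities are equalities in $L_0$ read off through the canonical isomorphisms $(m^*L)_0\cong L_0$ coming from the maps $\tilde m$. Once that convention is fixed, everything reduces to the coherence relation of $\hat{\gc}(L)$ and the transitivity $\hat{ac}=\hat c\circ\hat a$, and no further computation is needed.
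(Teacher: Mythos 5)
Your proof is correct and follows essentially the same route as the paper's: both reduce to the single equality $\phi_{ab}^{-1}(\hat a(s)(x_{ab}))=\phi_{acb}^{-1}(\hat{ac}(s)(x_{acb}))$ (the other following by exchanging $a$ and $c$) and derive it from the transitivity $\hat{ac}=\hat c\circ\hat a$, the inverted covering relation $\tilde c\circ\phi_{acb}^{-1}=\phi_{ab}^{-1}\circ\tilde c$, the evaluation identity $\tilde c(\hat c(t)(x_{acb}))=t(c\,x_{acb})=t(x_{ab})$, and the fact that $\tilde c$ acts trivially on the identified zero fibers. Your extra remarks --- that $acb\in J_x$ since $J_x$ is an ideal, and the explicit bookkeeping of the canonical isomorphisms $(m^*L)_0\cong L_0$ --- only spell out what the paper leaves implicit.
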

\begin{proof}
We prove 
$$\phi_{ab}^{-1}(\hat{a}(s)(x_{ab}))=\phi_{acb}^{-1}(\hat{ac}(s)(x_{acb})).
$$ 
Since $\phi_{acb}^{-1}(\hat{ac}(s)(x_{acb}))\in (acb)^*L\mid_{0}$ and any $\tilde{c}$ acts as the identity on the zero-fiber,
we have
\begin{multline*}
\phi_{acb}^{-1}(\hat{ac}(s)(x_{acb}))=\tilde{c}(\phi_{acb}^{-1}(\hat{ac}(s)(x_{acb}))) \\
 =\phi_{ab}^{-1}(\tilde{c}(\hat{c}\circ\hat{a} (s)(x_{acb}))) 
=\phi_{ab}^{-1}(\hat{a}(s)(cx_{acb}))=\phi_{ab}^{-1}(\hat{a}(s)(x_{ab})),
\end{multline*}
because $\tilde{c}(\hat{c}(s))=s\circ c $, $\forall s\in H^0((ab)^*L)$
(compare with Definition \ref{bhat}).
\end{proof}

\smallskip

We recall some properties of adelic theta functions (see \cite{Theta3}, Chap.~5):
\begin{proposition}\label{propertiestheta}~\vspace{3pt}
\begin{enumerate}\itemsep=5pt
\item \qquad $ \theta _s(x)=l(U_{\tau(-x)}s) $
\item \qquad $ \theta_{U_{\tau(y)}s}(x)=\tilde{e}(y,\frac{x}{2})\theta_s(x-y) $
\end{enumerate}
\end{proposition}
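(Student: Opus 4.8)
The plan is to reduce both identities to the definition of $\theta_s$ (Definition \ref{thetafctn}) together with the group-theoretic behaviour of the section $\tau$ and the representation $U$. The backbone for (1) is the observation that $\tau$ is an \emph{odd} section. Since the involution $i$ arises from $(-1)^*L\simeq L$, it covers $-1$ on $V(E)$, i.e. $g(i(y))=-g(y)$; hence from $\tau(x)=y\,i(y)^{-1}$ with $2g(y)=x$ (Definition \ref{tau}), taking $i(y)$ as the auxiliary element over $-x$ gives $\tau(-x)=i(y)\,i(i(y))^{-1}=i(y)\,y^{-1}=\tau(x)^{-1}$. Writing $\tau(x)=(x,(\phi_c)_{c\in J_x})$, inversion in $\hat{\mathcal{G}}(L)$ is componentwise (with negated base point), so $\tau(-x)=(-x,(\phi_c^{-1})_{c\in J_x})$; note $J_{-x}=J_x$ because each $K(c^*L)$ is a subgroup.

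I would then unwind $l\big(U_{\tau(-x)}s\big)$ directly. Fix $s=\iota_b(s_0)$ with $s_0\in H^0(b^*L)$ and choose $a$ with $ab\in J_x$; in the direct limit $s=\iota_{ab}(\hat a(s_0))$, so by Definition \ref{U} and the canonical representation (Definition \ref{bhat}) the $H^0((ab)^*L)$-component of $U_{\tau(-x)}s$ is $\phi_{ab}^{-1}\circ\hat a(s_0)\circ t_{x_{ab}}$, the translation being $t_{-(-x_{ab})}=t_{x_{ab}}$ because the base point of the $(ab)$-component of $\tau(-x)$ is $-x_{ab}$. Reading $l$ on a section as evaluation at the origin followed by $l\in L(0)^*$ — legitimate since each $\tilde c$ acts as the identity on the zero-fiber, which is exactly the compatibility making $\hat H^0(L)$ a well-defined limit — evaluation at $0$ yields $\phi_{ab}^{-1}\big(\hat a(s_0)(x_{ab})\big)\in ((ab)^*L)_0\simeq L_0$, and applying $l$ reproduces $\theta_s(x)$ verbatim. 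This gives (1).

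For (2) I would feed (1) back in: $\theta_{U_{\tau(y)}s}(x)=l\big(U_{\tau(-x)}U_{\tau(y)}s\big)$. Since $U$ is a group morphism (Proposition \ref{morphism}) and the central $\C^*\subset\hat{\mathcal{G}}(L)$ acts by homotheties, Proposition \ref{notmorphism} gives $U_{\tau(-x)}U_{\tau(y)}=\tilde e(-\tfrac{x}{2},y)\,U_{\tau(y-x)}$. Pulling the scalar through the linear functional $l$ and invoking (1) once more with $y-x=-(x-y)$ produces $\tilde e(-\tfrac{x}{2},y)\,\theta_s(x-y)$. Finally, the commutator $\tilde e$ is alternating and bi-multiplicative (it is a commutator pairing of a central extension of an abelian group), so $\tilde e(-\tfrac{x}{2},y)=\tilde e(\tfrac{x}{2},y)^{-1}=\tilde e(y,\tfrac{x}{2})$, which is the asserted formula.

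The only genuinely delicate point is the sign bookkeeping in the middle step: one must keep straight that the canonical representation inserts the translation $t_{-x_{ab}}$ while the base point of the relevant component of $\tau(-x)$ is already $-x_{ab}$, so that the two signs conspire to evaluate $\hat a(s_0)$ precisely at $x_{ab}$, and that the resulting vector lands in $((ab)^*L)_0$ canonically identified with $L_0$, so that the very functional $l$ of Definition \ref{thetafctn} applies and the value is independent of the auxiliary choice of $a$. Everything else is formal manipulation inside $\hat{\mathcal{G}}(L)$ and the limit $\hat H^0(L)$.
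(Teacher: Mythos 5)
Your proof is correct and follows essentially the same route as the paper's: establish that $\tau(-x)=\tau(x)^{-1}$, hence $\tau(-x)=(-x,(\phi_a^{-1})_{a\in J_x})$, unwind the canonical representation against Definition \ref{thetafctn} to get (1), and then derive (2) from (1) via the morphism property of $U$, Proposition \ref{notmorphism}, and skew-symmetry of $\tilde{e}$. The only cosmetic difference is that you obtain the oddness of $\tau$ directly from Definition \ref{tau} and the fact that the involution $i$ covers $-1$, whereas the paper gets it from Proposition \ref{morphism} via $U_{\tau(x)}\circ U_{\tau(-x)}=\tilde{e}(\tfrac{x}{2},-x)U_{\tau(0)}=\mathrm{id}$; both arguments are equally valid.
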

\begin{proof}
(1) Assume that $\tau(x)=(x,  (\phi_a)_{a\in J_{x}})$. By Proposition \ref{morphism}, we have
 $$U_{\tau(x)}\circ U_{\tau(-x)}=\tilde{e}(\tfrac{x}{2},-x)U_{\tau (0)}=id$$
so $U_{\tau(x)}^{-1}= U_{\tau(-x)}$ with
$\tau(-x)=(-x,  (\phi_a^{-1})_{a\in J_{x}})$. In order to ease the notations, if $s\in \iota_b(H^0(b^*L)) \subset \hat{H}^0(L)$ we set
$s_{ab}=\hat{a}(s)$, if $ab\in  J_x$, so we have:
$$U_{\tau(-x)}s=\phi_{ab}^{-1}s_{ab}(x_{ab}),\hskip2mm \mathrm{and} \hskip2mm \theta _s(x)=l(\phi_{ab}^{-1}s_{ab}(x_{ab}))=l(U_{\tau(-x)}s).$$
(2) It follows combining (1) with Proposition \ref{morphism}:
$$\theta_{U_{\tau(y)}s}(x)=l(U_{\tau(-x)\circ\tau(y)}s)=
\tilde{e}(y,\tfrac{x}{2})l(U_{\tau(y-x)}s)=\tilde{e}(y,\tfrac{x}{2})\theta_s(x-y). \vspace{-20pt}
$$
\end{proof}

\begin{theorem}
\label{main}~
\begin{enumerate}\itemsep=3pt
\item
As in Theorem \ref{sigmataucommute}, consider an elliptic curve $E$  with complex multiplication by $R$, embedded  in  $\mathbb{P}^2= \mathbb{P}(\C^3)$ by means of the Weierstrass model and assume it is   defined over a field $\mL $. Denote by $\sigma$ a field 
automorphisms   of $\C$ fixing $\K \cdot \mL$. Consider also a normalization $E\simeq \frac{\C}{\Lambda}$ (\cite{Sil}, \S II.1) with $\Lambda $ a fractional ideal of $\K$ and denote by
$L$  the line bundle providing the embedding $\frac{\C}{\Lambda}\to \mathbb{P}^2$. Fix a section $s\in H^0(L)$ corresponding to a line of $\mathbb{P}^2$ also defined on 
$\K \cdot \mL$.
Then we have:
$$\sigma(\theta_s(x))=\theta_{s}(\sigma(x))=\theta_{s}(l\cdot x),  $$
where $l\in \As $ is the  id{\`e}le of $\K$ corresponding to 
$\sigma: E_{tor}\to E_{tor}$ (according to \S $2$).
\item
Keep notations as in Theorems \ref{MTCM} and \ref{AMTCM}, define $l\in \As $ as the inverse of the  id{\`e}le  corresponding to $\sigma$ via Artin map,
 consider   $E(\mathbb{C})$ and $E(\mathbb{C})'$ both embedded $\mathbb{P}^2(\C)$ by means of  Weierstrass models and  define $L:=f^*\oc_{\mathbb{P}^2}(1)$, $L':=g^*\oc_{\mathbb{P}^2}(1)$.
Fix sections 
$s\in H^0(E,L)$ and $s'\in H^0(E',L')$ corresponding to the same line in $\mathbb{P}^2(\mathbb{K})$.
Then we have:
$$\sigma(\theta_s(x))=\theta_{s'}(\sigma(x))=\theta_{s'}(l\cdot x).  $$
\end{enumerate}
\end{theorem}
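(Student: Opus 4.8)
The plan is to reduce both statements to the single formula $\theta_s(x)=l_0(U_{\tau(-x)}s)$ of Proposition \ref{propertiestheta}, (1), where I write $l_0\in L(0)^*$ for the fixed functional called $l$ in Definition \ref{thetafctn}, reserving $l$ for the idèle of the statement. The strategy is then to push the automorphism $\sigma$ past each of the ingredients on the right-hand side in turn: the functional $l_0$, the section $s$, the canonical representation $U$, and the section $\tau$ of the adelic Heisenberg group. Once $\sigma$ has been transported all the way through one is left with $\theta_s(\sigma(x))$ (resp.\ $\theta_{s'}(\sigma(x))$), and the final equality with $\theta_s(l\cdot x)$ (resp.\ $\theta_{s'}(l\cdot x)$) is merely the description of the $\sigma$-action on $V(E)$ as multiplication by the idèle $l$, furnished by \S 2 in case (1) and by Theorem \ref{MTCM} (via Lemma \ref{3points}) in case (2).

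The heart of the argument, and the step I expect to be the main obstacle, is the Galois-equivariance of the canonical representation, namely the intertwining relation
$$\sigma\big(U_{\alpha}\,t\big)=U_{\sigma(\alpha)}\big(\sigma(t)\big),\qquad \alpha\in\hat{\mathcal{G}}(L),\ t\in\hat{H}^0(L),$$
where on the left $\sigma$ acts coordinate-wise on sections and on the right $\sigma(\alpha)=\alpha^{\sigma}$ is the automorphism of $\hat{\mathcal{G}}(L)$ of Theorem \ref{sigmataucommute} (resp.\ the isomorphism $\hat{\mathcal{G}}(L)\to\hat{\mathcal{G}}(L')$ of Theorem \ref{AMTCM}). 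I would prove it one level $a\in J_x$ at a time: by Definition \ref{U}, (1), $U_{\alpha}\mid_{H^0(a^*L)}=\tilde{\rho}_a(\phi_a)$ with $\tilde{\rho}_a(\phi_a)(t)=\phi_a\circ t\circ t_{-x_a}$. Applying $\sigma$, using that $\sigma$ intertwines translations, $\sigma\circ t_{-x_a}=t_{-\sigma(x_a)}\circ\sigma$, and invoking Lemma \ref{extensionphi}, which identifies $\phi_a^{\sigma}=\tilde{\sigma}_a\circ\phi_a\circ\tilde{\sigma}_a^{-1}$ with the image of $\phi_a$ under $\sigma$ acting on the matrix of its canonical representation, yields $\sigma(\tilde{\rho}_a(\phi_a)(t))=\tilde{\rho}_a(\phi_a^{\sigma})(\sigma(t))$, which is precisely the $a$-component of the right-hand side. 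Compatibility across levels requires only that $\sigma$ commute with the transition maps $\hat{b}$ of Definition \ref{bhat}; since these are induced by the multiplication-by-$b$ maps and the coverings $\tilde{b}$, all defined over $\mL$, this is automatic. What makes this delicate is that it mixes the Galois action, the translation part of the canonical representation, and the direct-limit structure of $\hat{H}^0(L)$, and that one must check throughout that $\sigma$ sends the tower of $L$ to that of $L'$ — this is exactly where the hypotheses that everything be defined over $\K\cdot\mL$ are genuinely used.

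Granting the intertwining relation, part (1) is immediate. Since $\sigma$ fixes $\K\cdot\mL$ and both $s$ and $l_0$ are defined over $\K\cdot\mL$, we have $\sigma(s)=s$ and $\sigma\circ l_0=l_0\circ\sigma$ on the fibre over $0$. Hence, combining Proposition \ref{propertiestheta}, (1), the intertwining relation, and Theorem \ref{sigmataucommute} in the form $\sigma(\tau(-x))=\tau(\sigma(-x))=\tau(-\sigma(x))$, one computes
$$\sigma(\theta_s(x))=\sigma\big(l_0(U_{\tau(-x)}s)\big)=l_0\big(U_{\sigma(\tau(-x))}\,\sigma(s)\big)=l_0\big(U_{\tau(-\sigma(x))}s\big)=\theta_s(\sigma(x)).$$
Finally $\sigma(x)=l\cdot x$ by the very definition of the idèle $l$ attached to $\sigma\colon E_{tor}\to E_{tor}$ in \S 2, giving $\theta_s(\sigma(x))=\theta_s(l\cdot x)$.

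Part (2) runs identically, with Theorem \ref{sigmataucommute} replaced by Theorem \ref{AMTCM}. The only new point is that $\sigma$ no longer fixes the section: since $s\in H^0(E,L)$ and $s'\in H^0(E',L')$ are cut out by the \emph{same} line of $\mathbb{P}^2(\K)$ and $\sigma$ acts coordinate-wise on $\mathbb{P}^2$, one has $\sigma(s)=s'$, and likewise $\sigma$ carries the functional $l_0$ for $L$ to the corresponding functional $l_0'$ for $L'$. Writing $U'$, $\tau'$ for the data on $E'$ and using Theorem \ref{AMTCM} in the form $\sigma(\tau(-x))=\tau'(-\sigma(x))$, the same chain of equalities gives
$$\sigma(\theta_s(x))=l_0'\big(U'_{\tau'(-\sigma(x))}s'\big)=\theta_{s'}(\sigma(x)).$$
The concluding identity $\theta_{s'}(\sigma(x))=\theta_{s'}(l\cdot x)$ then follows from Lemma \ref{3points}, (3), which identifies $\sigma$ on $V(E)$ with multiplication by $s^{-1}$, that is, by the idèle $l$ of the statement.
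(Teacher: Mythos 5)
Your proof is correct, but it is packaged differently from the paper's. The paper never states an equivariance property of the representation $U$; it proves the identity by a direct single-level computation from Definition \ref{thetafctn}: using Theorem \ref{AMTCM} in exactly your form, $\tau'(\sigma(x))=(\sigma(x),(\phi_a^{\sigma})_{a\in J_x})$, it writes
$$\theta_{s'}(\sigma(x))=l\big((\phi_a^{\sigma})^{-1}(s'_a(\sigma(x_a)))\big)$$
and pushes $\sigma$ through one ingredient at a time: $s'_a\circ\sigma=\sigma\circ s_a$ (the two sections come from the same $\K$-rational line), $(\phi_a^{\sigma})^{-1}\circ\sigma=\sigma\circ\phi_a^{-1}$ (the conjugation description $\phi^{\sigma}=\tilde{\sigma}_a\circ\phi\circ\tilde{\sigma}_a^{-1}$), and finally $l\circ\sigma=\sigma\circ l$ (rationality of the evaluation functional). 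Your route through Proposition \ref{propertiestheta}, (1) and the intertwining relation $\sigma(U_{\alpha}t)=U'_{\sigma(\alpha)}(\sigma(t))$ proves strictly more: it establishes Galois equivariance of the whole representation $U$, of which the theorem is the specialization $\alpha=\tau(-x)$, $t=s$. The price is the extra verification of compatibility with the transition maps $\hat{b}$ of the direct limit, which the paper's computation never needs, since it fixes one $a\in J_x$ and works entirely in the fiber over $0$ at that level; your level-by-level verification of the intertwining relation is, in substance, the same manipulation the paper performs inline, resting on the same inputs (Theorems \ref{sigmataucommute}/\ref{AMTCM}, Lemma \ref{extensionphi}, and the rationality hypotheses). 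A small point in your favor: by writing $l_0$ for the functional of Definition \ref{thetafctn} you avoid the paper's notational clash, where the same letter $l$ denotes both the id\`ele of the statement and that functional within one and the same proof.
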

\begin{proof} We are going to prove (2) as the proof of (1) runs the same way.
By Theorem \ref{AMTCM}, $\tau' (\sigma (x))=\sigma (\tau (x))\in \hat{\mathcal{G}}(L')$, $\forall x\in V(E)$ so 
$\tau' (\sigma (x))=(\sigma(x),  (\phi^{\sigma}_a)_{a\in J_{x}})$. 
Then we have:
$$\theta_{s'}(\sigma(x))=l((\phi^{\sigma}_a)^{-1}(s'_a(\sigma(x_a))))=l((\phi^{\sigma}_a)^{-1}(\sigma(s_a(x_a)))),  $$ by definition of $s$ and $s'$,
$$l((\phi^{\sigma}_a)^{-1}(\sigma(s(x_a))))=l(\sigma((\phi_a)^{-1}(s(x_a))))$$
 by definition of 
 $\sigma: \gc (a^*L)\rightarrow \gc (a^*L'),$ and finally
 $$l(\sigma((\phi_a)^{-1}(s(x_a))))=\sigma(l((\phi_a)^{-1}(s(x_a))))=\sigma (\theta_s(x))$$ since $l$ is  defined over $\mathbb{K}.$
\end{proof}
We have furthermore the important Corollary (see \cite{Theta3}, Proposition 5.6):

\begin{definition}(Compare with \cite{Sil}, Theorem 8.2)
\label{chi}
 Fix an automorphism of the complex numbers $\sigma$, and assume $\sigma\mid_{\mathbb{K}^{ab}}=[t,\mathbb{K}]$, $\sigma\mid_{\mathbb{Q}^{ab}}=[r,\mathbb{Q}]$ via Artin maps:
 $$[\,\cdot\,, \mathbb{K}]:\mathbb{A}^*_{\mathbb{K}}\rightarrow Gal(\mathbb{K}^{ab},\mathbb{K}), \hskip4mm [\,\cdot\,, \mathbb{Q}]:\mathbb{A}^*_{\mathbb{Q}}\rightarrow Gal(\mathbb{Q}^{ab},\mathbb{Q}).$$
We define: 
$$\chi_{\sigma}= V(E)\times V(E)\rightarrow  \overline{\mathbb{Q}}^1 ,\hskip5mm \chi_{\sigma}(x,y)=\frac{r^{-1}\tilde{e}(x,y)}{\tilde{e}(s^{-1}x,s^{-1}y)}.$$
\end{definition}

We state our main result concerning the behaviour of adelic theta functions under automorphisms:
\begin{theorem}
\label{main'}~
\begin{enumerate}\itemsep=3pt
\item With notations as in Theorem \ref{main} (1),  let $t$ and $r$ be  ideles of $\mathbb{K}$ and 
$\mathbb{Q}$ corresponding to $\sigma$ via Artin maps. Then  we have:
$$\sigma (\theta _{U_{\tau(y)}s}(x))=\chi_{\sigma}(y,\tfrac{x}{2})\theta_{U_{\tau(t^{-1}y)}s}(t^{-1}x).$$
\item
With notations as in Theorem \ref{main} (2),  let $t$ and $r$ be  ideles of $\mathbb{K}$ and 
$\mathbb{Q}$ corresponding to $\sigma$ via Artin maps. Then  we have:
$$\sigma (\theta _{U_{\tau(y)}s}(x))=\chi_{\sigma}(y,\tfrac{x}{2})\theta_{U_{\tau(t^{-1}y)}s'}(t^{-1}x).$$
\end{enumerate}
\end{theorem}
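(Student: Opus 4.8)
The plan is to prove (2) and obtain (1) as the special case $E'=E$, $s'=s$, where $\sigma$ fixes $\K\cdot\mathbb{L}$ and so, by Theorem \ref{sigmataucommute}, restricts to an automorphism of $\hat{\mathcal{G}}(L)$ commuting with $\tau$; the argument for (1) is then word for word the same. Throughout, write $t$ for the idèle of $\K$ attached to $\sigma$, so that by Lemma \ref{3points} and Theorem \ref{AMTCM} the map $\sigma$ acts on $V(E)$ as multiplication by $t^{-1}$, and recall that the idèle called $s$ in Definition \ref{chi} is exactly this $t$.

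First I would remove the $U_{\tau(y)}$-twist using Proposition \ref{propertiestheta} (2), apply $\sigma$, and then convert the resulting level-zero theta by Theorem \ref{main} (2). Concretely,
\[
\theta_{U_{\tau(y)}s}(x)=\tilde{e}(y,\tfrac{x}{2})\,\theta_s(x-y),
\]
so, since $\sigma$ is a field automorphism,
\[
\sigma\!\left(\theta_{U_{\tau(y)}s}(x)\right)=\sigma\!\left(\tilde{e}(y,\tfrac{x}{2})\right)\cdot\theta_{s'}\!\left(t^{-1}x-t^{-1}y\right),
\]
where the second factor comes from Theorem \ref{main} (2) together with $\sigma(x-y)=t^{-1}x-t^{-1}y$.

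Next I would re-introduce the twist on the $E'$-side. Applying Proposition \ref{propertiestheta} (2) for $(E',L',s')$ at the points $t^{-1}x$, $t^{-1}y$ and solving for the level-zero theta gives
\[
\theta_{s'}\!\left(t^{-1}x-t^{-1}y\right)=\tilde{e}^{L'}\!\left(t^{-1}y,\tfrac{t^{-1}x}{2}\right)^{-1}\theta_{U_{\tau(t^{-1}y)}s'}\!\left(t^{-1}x\right).
\]
Substituting this into the previous line collects the scalars into a single prefactor:
\[
\sigma\!\left(\theta_{U_{\tau(y)}s}(x)\right)=\frac{\sigma\!\left(\tilde{e}(y,\tfrac{x}{2})\right)}{\tilde{e}^{L'}\!\left(t^{-1}y,\tfrac{t^{-1}x}{2}\right)}\;\theta_{U_{\tau(t^{-1}y)}s'}\!\left(t^{-1}x\right).
\]
It remains to recognize this prefactor as $\chi_{\sigma}(y,\tfrac{x}{2})$.

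This last identification is the heart of the matter and is precisely what Definition \ref{chi} is built to encode. Reading the denominator of $\chi_{\sigma}$ via $s^{-1}=t^{-1}\colon V(E)\to V(E')$, one has $\chi_{\sigma}(y,\tfrac{x}{2})=r^{-1}\tilde{e}(y,\tfrac{x}{2})\big/\tilde{e}^{L'}(t^{-1}y,\tfrac{t^{-1}x}{2})$, so the two denominators already agree and the whole claim reduces to
\[
\sigma\!\left(\tilde{e}(y,\tfrac{x}{2})\right)=r^{-1}\tilde{e}(y,\tfrac{x}{2}).
\]
Since $\tilde{e}(y,\tfrac{x}{2})$ lies in the group of roots of unity $\boldsymbol{\mu}$ by \eqref{etilde}, the left-hand side depends only on $\sigma\mid_{\Q^{ab}}=[r,\Q]$, i.e.\ on the $\Q$-cyclotomic character, which in the normalization of Definition \ref{chi} is exactly the action written $r^{-1}(\,\cdot\,)$; this gives the required equality. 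The genuine obstacle is therefore conceptual rather than computational: one must carefully distinguish, and then reconcile, the two Artin-map normalizations that are simultaneously in play — the $\Q$-cyclotomic action of $r$ on the \emph{values} of the commutator pairing $\tilde{e}$ in $\boldsymbol{\mu}$, versus the $\K$-CM action of $t^{-1}$ on the \emph{arguments} in $V(E)$ — and to verify that the denominator of $\chi_{\sigma}$ is indeed the $E'$-pairing $\tilde{e}^{L'}$ rather than $\tilde{e}^{L}$. In part (1), where $E'=E$ and $\tilde{e}^{L'}=\tilde{e}$, this last point is automatic, which is why statement (1) is formally the cleaner of the two.
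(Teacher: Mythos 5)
Your proposal follows essentially the same route as the paper's own proof: untwist with Proposition \ref{propertiestheta} (2), apply $\sigma$ using Theorem \ref{main} together with the action of $r^{-1}$ on the root of unity $\tilde{e}(y,\tfrac{x}{2})$, retwist on the $E'$-side with Proposition \ref{propertiestheta} (2) again, and recognize the prefactor as $\chi_{\sigma}(y,\tfrac{x}{2})$. The only difference is presentational: you spell out the identification $\sigma\bigl(\tilde{e}(y,\tfrac{x}{2})\bigr)=r^{-1}\tilde{e}(y,\tfrac{x}{2})$ via the cyclotomic character and the reading of the denominator of $\chi_{\sigma}$ as the $E'$-pairing $\tilde{e}^{L'}$, both of which the paper leaves implicit in a single displayed line.
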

\begin{proof} We are going to prove (2) as the proof of (1) runs the same way.
By Proposition \ref{propertiestheta}, $\theta _{U_{\tau(y)}s}(x)=\tilde{e}(y,\tfrac{x}{2})\theta_s(x-y)$, so Theorem \ref{main} implies:
\begin{multline*}
\sigma (\theta _{U_{\tau(y)}s}(x))=\sigma(\tilde{e}(y,\tfrac{x}{2})\theta_s(x-y)) \\
=r^{-1}\tilde{e}(y,\tfrac{x}{2})\theta_{s'}(\sigma(x-y))=
r^{-1}\tilde{e}(y,\tfrac{x}{2})\theta_{s'}(t^{-1}(x-y)),
\end{multline*}
and we conclude by applying Proposition \ref{propertiestheta} once again.
\end{proof}

Finally, we may  apply our results to the set of arithmetic  1-dimensional $\K$  lattices modulo commensurability and to the groupoid of commensurability modulo dilations. 

\begin{theorem}\label{last}
Keep notations as  in Theorem \ref{main} (1) 
and let   $l\in \As $ be an id{\`e}le of $\K$ corresponding to 
$\sigma: E_{tor}\to E_{tor}$ (according to \S $2$). 
Recall the map 
$$\rho_{\Lambda}:\fc \rightarrow V(E_{\Lambda}), \hskip4mm \rho_{\Lambda}\mid_{T(E_{\Lambda'})^*}:=V(\alpha_{\Lambda, \Lambda'})^{-1},$$
defined in \ref{mapslattices} and acting on the set of arithmetic  1-dimensional $\K$  lattices modulo commensurability $\fc$. Set
\begin{equation}\label{lasteq}
\tilde{\theta}_s:=  \theta_s \circ \rho_{\Lambda}: \fc \longrightarrow \overline{\mathbb{L}}.
\end{equation}

Then we have
$$\sigma(\tilde{\theta}_s(x))=\tilde{\theta}_{s}(l\cdot x).  $$
\end{theorem}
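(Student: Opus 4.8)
The plan is to reduce Theorem \ref{last} to the already-established Theorem \ref{main} (1) by unwinding the definition \eqref{lasteq} of $\tilde{\theta}_s$ and showing that the map $\rho_{\Lambda}$ on $\K$-lattices modulo commensurability intertwines the two relevant actions: the $\C$-automorphism action through which $\sigma$ acts, and the idelic multiplication-by-$l$ action. Concretely, since $\tilde{\theta}_s = \theta_s \circ \rho_{\Lambda}$, the left-hand side is $\sigma(\theta_s(\rho_{\Lambda}(x)))$ and, by Theorem \ref{main} (1), this equals $\theta_s(l\cdot \rho_{\Lambda}(x))$. So the whole statement reduces to the single compatibility
$$
l\cdot \rho_{\Lambda}(x) = \rho_{\Lambda}(l\cdot x), \hskip3mm \forall x\in \fc,
$$
where on the left $l$ acts on $V(E_\Lambda)$ by the idelic multiplication of \S\,2 and on the right $l$ acts on $\fc = \bigcup_{\Lambda'}T(E_{\Lambda'})^*$ via the $\hat R^*$-torsor structure described in Corollary \ref{invertiblelattices} and Theorem \ref{arithKlattices}.

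First I would fix a commensurability class represented by a point in some $T(E_{\Lambda'})^*$, so that $\rho_{\Lambda}$ acts on it as $V(\alpha_{\Lambda,\Lambda'})^{-1}$, the isomorphism $V(E_{\Lambda'})\to V(E_\Lambda)$ induced by the $\Q$-isogeny $\alpha_{\Lambda,\Lambda'}=(E_{\Lambda\cap\Lambda'},p_1,p_2)$ of Notations \ref{mapslattices} (1). The key point is that $V(\alpha_{\Lambda,\Lambda'})^{-1}$ is an isomorphism of $\hat R$-modules (it is built from isogenies, which commute with the $R$-action in the CM setting), so it commutes with the multiplication-by-$l$ operators on the respective adelic curves. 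That is precisely what makes the diagram
$$
\begin{array}{ccc}
T(E_{\Lambda'})^* & \xrightarrow{\,l\,} & T(E_{\Lambda'})^* \\
\downarrow{\scriptstyle V(\alpha_{\Lambda,\Lambda'})^{-1}} & & \downarrow{\scriptstyle V(\alpha_{\Lambda,\Lambda'})^{-1}} \\
V(E_\Lambda) & \xrightarrow{\,l\,} & V(E_\Lambda)
\end{array}
$$
commute, giving the required identity fibrewise and hence on all of $\fc$ after gluing.

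I would then verify that the identification of $\fc$ with $\bigcup_{\Lambda'}T(E_{\Lambda'})^*$ from Corollary \ref{invertiblelattices} is itself $l$-equivariant, i.e.\ that the multiplication-by-$l$ action on the set of arithmetic $\K$-lattices modulo commensurability (coming from Theorem \ref{arithKlattices}, where $\As\simeq\hat R\times_{\hat R^*}\ac^*$) agrees, under $\rho_\Lambda$, with the idelic action of \S\,2 on $V(E_\Lambda)$. This is mostly a bookkeeping check using \eqref{homisomorphism} and the torsor structure of $T(E)^*$, together with Lemma \ref{multiplication} to handle passage between different representatives $\Lambda'$. The main obstacle I anticipate is exactly this last step: one must confirm that the idele $l$ associated to $\sigma:E_{tor}\to E_{tor}$ in \S\,2 acts identically whether regarded as acting on the adelic curve $V(E_\Lambda)$ or on the commensurability class, and that these two a priori different uses of ``multiplication by an idele'' (one via the Barsotti--Tate module generators, one via the Class Field map of Corollary \ref{Klattices}) are genuinely the same map. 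Once this equivariance is pinned down, the conclusion $\sigma(\tilde\theta_s(x)) = \theta_s(\rho_\Lambda(l\cdot x)) = \tilde\theta_s(l\cdot x)$ is immediate.
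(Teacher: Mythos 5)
Your proposal is correct and takes essentially the same route as the paper: restrict to each piece $T(E_{\Lambda'})^*$ of $\fc$, apply Theorem \ref{main} (1) to get $\sigma(\theta_s(\rho_\Lambda(x)))=\theta_s(l\cdot\rho_\Lambda(x))$, and then use the commutation $l\cdot V(\alpha_{\Lambda,\Lambda'})^{-1}(x)=V(\alpha_{\Lambda,\Lambda'})^{-1}(l\cdot x)$, which the paper justifies exactly as you do, by observing that $V(\alpha_{\Lambda,\Lambda'})^{-1}$ is itself multiplication by an id{\`e}le (equivalently, is $\hat{R}$-linear) once a generator of $T(E)^*$ is fixed. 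The equivariance bookkeeping you flag as a remaining obstacle is precisely what the paper dispatches in its one-line parenthetical remark, so your argument is complete in the same sense theirs is.
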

\begin{proof} Recall that $\fc = \bigcup_{\Lambda'}T(E_{\Lambda'})^*$ by Corollary \ref{invertiblelattices}.
By \ref{mapslattices} (2) and \ref{lasteq}, 
then
$$\tilde{\theta}\mid_{T(E_{\Lambda'})^*}= \theta_s \circ V(\alpha_{\Lambda, \Lambda'})^{-1}.$$
Then it follows from Theorem \ref{main}
that 
\begin{multline*}
\sigma(\tilde{\theta}_s(x))=\sigma(\theta_s(V(\alpha_{\Lambda, \Lambda'})^{-1}(x)))=
\theta_s(l\cdot V(\alpha_{\Lambda, \Lambda'})^{-1}(x)))= \\
\theta_s(V(\alpha_{\Lambda, \Lambda'})^{-1}(l\cdot x)))=\tilde{\theta}_{s}(l\cdot x),
\hskip2mm \forall x\in T(E_{\Lambda'})^*
\end{multline*}
(the third equality depends on the fact that $V(\alpha_{\Lambda, \Lambda'})^{-1}$ is just the multiplication by an id{\`e}le once one has fixed an element in $T(E)^*$).
We are done because of $\fc=\bigcup_{\Lambda'}T(E_{\Lambda'})^*$.
\end{proof}

\begin{notations}\label{lastnotations}
Like in Theorem \ref{groupoidthm} fix a set of representatives $\Lambda _i$ of $Cl(R)$, 
 $1\leq i \leq \sharp Cl(R)$,  and define $E_i:=\frac{\C}{\Lambda_i}$. Let $\sigma \in \mathrm{Aut}(\mathbb{C})$ fixing $\K$ and let $s$ be an id{\`e}le of $\K$ corresponding to $\sigma$ via Artin map. Set moreover  $E_i':=\sigma(E_i)$ like in Theorems \ref{MTCM} and \ref{AMTCM},   consider 
$E(\mathbb{C})$ and $E(\mathbb{C})'$ both embedded $\mathbb{P}^2(\C)$ by means of  Weierstrass models and  define $L_i:=f_i^*\oc_{\mathbb{P}^2}(1)$, $L_i':=g_i^*\oc_{\mathbb{P}^2}(1)$.
Fix sections 
$s_i\in H^0(E_i,L_i)$ and $s'_i\in H^0(E'_i,L_i')$ corresponding to the same line in $\mathbb{P}^2(\mathbb{K})$.
Recall (compare with \ref{mapslattices}) that we defined
 a map from  the groupoid of commensurability modulo dilations $\Sc $   to $\bigcup_{\Lambda_i }  V(E_{\Lambda_i})$:
 
$$\xi: \Sc = \bigcup_{(\Lambda_i,\Lambda)\in\ec  }\fc _{\Lambda_i, \Lambda}\longrightarrow  \bigcup_{\Lambda_i }  V(E_{\Lambda_i}), \hskip3mm
\xi\mid_{\fc _{\Lambda_i, \Lambda}}: (x,y)\rightarrow   V(\alpha_{\Lambda_i, \Lambda})^{-1}(y)\cdot x. $$
\end{notations}

The following Theorem can be proved just as Theorem \ref{last}

\begin{theorem}\label{last'}
Keeping notations as above, define $$\Theta, \Theta': \Sc \to \overline{\mathbb{L}}$$ by
$$\Theta\mid_{\fc _{\Lambda_i, \Lambda}}:=\theta_{s_i} \circ \xi, \hskip2mm \Theta'\mid_{\fc _{\Lambda_i, \Lambda}}:=\theta_{s_i'} \circ \xi.
$$
Then we have
$$\sigma(\Theta(x,y))=\Theta'(s^{-1}x,y).  $$
\end{theorem}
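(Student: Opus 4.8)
The plan is to follow exactly the same scheme as the proof of Theorem \ref{last}, working one groupoid--piece $\fc_{\Lambda_i,\Lambda}$ at a time. By Theorem \ref{groupoidthm} the groupoid $\Sc$ is the union of the pieces $\fc_{\Lambda_i,\Lambda}$ with $(\Lambda_i,\Lambda)\in\ec$, so it suffices to establish $\sigma(\Theta(x,y))=\Theta'(s^{-1}x,y)$ on each such piece and then to observe that these pieces cover $\Sc$. On $\fc_{\Lambda_i,\Lambda}$ the definitions of Notations \ref{lastnotations} and \ref{mapslattices}(3) read
$$\Theta(x,y)=\theta_{s_i}(\xi(x,y)),\qquad \xi(x,y)=V(\alpha_{\Lambda_i,\Lambda})^{-1}(y)\cdot x\in V(E_i),$$
so the whole computation takes place inside the single adelic curve $V(E_i)$ and its $\sigma$--transform $V(E_i')$.

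First I would apply Theorem \ref{main}(2) to the curve $E_i$, its transform $E_i'=E_i^{\sigma}$, and the chosen sections $s_i$, $s_i'$. Since the restriction of $\sigma$ to $V(E_i)$ is, by Lemma \ref{3points}(3), the scaling isomorphism $x\mapsto s^{-1}x$ from $V(E_i)$ to $V(E_i')$, this gives
$$\sigma(\Theta(x,y))=\sigma\big(\theta_{s_i}(\xi(x,y))\big)=\theta_{s_i'}\big(\sigma(\xi(x,y))\big)=\theta_{s_i'}\big(s^{-1}\cdot\xi(x,y)\big).$$
This is the exact analogue of the step in Theorem \ref{last} where Theorem \ref{main}(1) was invoked; the only difference is that here the curve and the section are allowed to move under $\sigma$, which is precisely what part (2) of Theorem \ref{main} accounts for.

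The remaining task is to show $s^{-1}\cdot\xi(x,y)=\xi(s^{-1}x,y)$, the counterpart of the ``third equality'' in the proof of Theorem \ref{last}. For this I would use that $V(\alpha_{\Lambda_i,\Lambda})^{-1}$ is, once a generator of the relevant $\hat{R}^*$--torsor has been fixed, nothing but multiplication by a single idèle, together with the commutativity of the $\As$--action on $V(E_i)$. Scaling $V(\alpha_{\Lambda_i,\Lambda})^{-1}(y)\cdot x$ by $s^{-1}$, the contribution coming from $y$ passes through the idèle multiplication $V(\alpha_{\Lambda_i,\Lambda})^{-1}$ and recombines with $s^{-1}x$ to yield $V(\alpha_{\Lambda_i,\Lambda})^{-1}(y)\cdot(s^{-1}x)=\xi(s^{-1}x,y)$. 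Feeding this back gives $\sigma(\Theta(x,y))=\theta_{s_i'}(\xi(s^{-1}x,y))=\Theta'(s^{-1}x,y)$, as required.

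The step I expect to be the main obstacle is the bookkeeping hidden in this last commutation. Unlike in Theorem \ref{last}, where $\rho_{\Lambda}$ is literally multiplication by one idèle on a single torsor, the map $\xi$ couples two coordinates through the $\Q$--isogeny $\alpha_{\Lambda_i,\Lambda}$, so one must check that scaling by $s^{-1}$ touches only the ``idèle--class'' coordinate $x$ and is transparent to the isogeny datum carried by $y$ --- and, correlatively, that the identification $V(E_i)\simeq V(E_i')$ supplied by $\sigma$ makes $\theta_{s_i'}\circ\xi$ meaningful, with any change of the chosen representative $\Lambda_i$ forced by $s^{-1}$ being reabsorbed into the correct piece of $\Sc$. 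Once this compatibility is verified, the passage over the finitely many pieces indexed by $\ec$ is immediate and the theorem follows.
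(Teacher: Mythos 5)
Your proposal is correct and matches the paper's intended argument: the paper itself only says that Theorem \ref{last'} ``can be proved just as Theorem \ref{last}'', and your proof is precisely that argument carried out piecewise on each $\fc_{\Lambda_i,\Lambda}$, invoking Theorem \ref{main}(2) in place of \ref{main}(1) and then commuting $s^{-1}$ past $V(\alpha_{\Lambda_i,\Lambda})^{-1}$ exactly as in the ``third equality'' of the proof of Theorem \ref{last}.
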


\end{document}